\newtheorem{theorem}{Theorem}[section]
\newtheorem{coro}[theorem]{Corollary}
\newtheorem{lemma}[theorem]{Lemma}
\newtheorem{prop}[theorem]{Proposition}
\theoremstyle{definition}
\newtheorem{definition}[theorem]{Definition}
\theoremstyle{remark}
\newtheorem{remark}[theorem]{Remark}
\DeclareMathAlphabet{\mathpzc}{OT1}{pzc}{m}{it}
\numberwithin{equation}{section}
\newcommand{\I}{\mathrm{i}}
\newcommand{\Z}{\mathbb{Z}}
\newcommand{\ve}{\varepsilon}
\DeclareMathOperator{\CS}{CS}
\DeclareMathOperator{\Li}{Li_2}
\DeclareMathOperator{\Imaginary}{Im}
\DeclareMathOperator{\sign}{sgn}
\DeclareMathOperator{\Vol}{Vol}
\DeclareMathDelimiter{\Norm}{\mathord}{largesymbols}{"3E}{largesymbols}{"3E}
\begin{document}
\baselineskip 16pt
\parskip 8pt
\sloppy

%%%%%%%%%%%%%%%%%% TITLE %%%%%%%%%%%%%%

\title[Cluster Algebra \& Complex Volume]{Cluster Algebra and 
  Complex Volume of Once-Punctured Torus Bundles and Two-Bridge Links}

%%%%%%%%%%%%%%%%%%%%%%% AUTHOR(S) %%%%%%%%%%%%%%%%%%%

\author[K. Hikami]{Kazuhiro \textsc{Hikami}}

\address{Faculty of Mathematics,
  Kyushu University,
  Fukuoka 819-0395, Japan.}

\email{KHikami@gmail.com}

 \author[R. Inoue]{Rei \textsc{Inoue}}

 \address{Department of Mathematics and Informatics,
   Faculty of Science,
   Chiba University,
   Chiba 263-8522, Japan.}

 \email{reiiy@math.s.chiba-u.ac.jp}

%%%%%%%%%%%%%%%%%%%%%% DATE %%%%%%%%%%%%%%%%%%%%%%%%%%%
%(Received: \hspace{40mm})

\vspace{18pt}
%\date{\today}
\date{December 25, 2012. Revised on January 30, 2014.}

%%%%%%%%%%%%%%%%%%%%%% ABSTRACT %%%%%%%%%%%%%%%%%%%%%%
\begin{abstract}
We propose a method to compute  complex volume of 2-bridge link
complements.
Our construction sheds light on  a relationship between  cluster variables with coefficients
and canonical decompositions of link complements.

\end{abstract}

%%%%%%%%%%%%%%%%%%%%%%% Key Words %%%%%%%%%%%%%%%%%%%%%%%%%%

%\keywords{}

%\subjclass[2000]{
%}

%%%%%%%%%%%%%%%%%%%%%%%%%%%%%%%%%%%%%%%%%%%%%%%%%%%%%%%%%
%\newpage

%%\renewcommand{\thefootnote}{\arabic{footnote}}
\maketitle

%%%%%
\section{Introduction}

The cluster algebra was introduced by Fomin and Zelevinsky 
in~\cite{FominZelev02a},
and it has been studied extensively since then.
The characteristic operation in the
cluster algebra called ``mutation" is related to
various notions, and 
there exist many applications of cluster algebra to
the representation theory of Lie algebras and quantum groups,
triangulated surface~\cite{FomiShapThur08a,FominThurs12a},
Teichm\"uller theory~\cite{FockGonc06b},
integrable systems, and so on.

In geometry,
the cluster algebraic
techniques are  used to understand hyperbolic structure of fibered
bundles~\cite{NagaTeraYama11a},
%(see also~\cite{FomiShapThur08a,FominThurs12a} for triangulated
%surfaces).
where  cluster $y$-variables are identified with moduli of ideal hyperbolic
tetrahedra.
Our purpose in this paper is to study complex volume of 2-bridge link
complements $M$ via cluster variables with coefficients.
The complex volume is a complexification of hyperbolic volume,
\begin{equation*}
  \Vol(M)+ \I \, \CS(M) ,
\end{equation*}
where $\Vol(M)$ is the hyperbolic volume
and $\CS(M)$ is the Chern--Simons invariant of~$M$.
Based on  canonical decompositions of 2-bridge link complements
in~\cite{SakumWeek95a},
we  clarify a relationship between ideal tetrahedra and cluster mutations.
Main observation is that the cluster variable with coefficients is
closely related to Zickert's formulation of complex 
volume~\cite{Zicke09}, and that
the complex volume is  given  from the cluster variable
(Theorem~\ref{thm:complex_volume_2bridge}, also Remark~\ref{rem:Ronly}).
We shall also give a formula of complex volume for once-punctured torus
bundle over the circle
(Theorem~\ref{thm:complex_torus}).

There may be natural extensions of our results.
One of them is a quantization of the cluster algebra, which will be
helpful in studies of Volume
Conjecture~\cite{Kasha96b,MuraMura99a},
a relationship between hyperbolic geometry and quantum
invariants.
Indeed in the case of once-punctured torus bundle,
a classical limit of adjoint action
of mutations and its relationship with~\cite{KHikami06a,DimGukLenZag09a} are studied  
in~\cite{TerasYamaz11b}.
Also a generalization to higher rank~\cite{FockGonc06b} remains for
future works.

This paper is organized as follows.
In Section~\ref{sec:review}, we briefly review the definition of  the
cluster algebra and the  three-dimensional hyperbolic geometry, and
explain their
interrelationship by taking a simple example.
Section~\ref{sec:torus} is devoted to the 
once-punctured torus bundles over
the circle.
We formulate the hyperbolic volume via $y$-variables,
%in Section~3.1,
and the complex volume via cluster variables.
% in Section~3.3.
Section~\ref{sec:knot} is for 2-bridge links.
First we review a canonical decomposition of the  2-bridge link
complements, and we reformulate
it in terms of the cluster algebra.
The key is to introduce  the cluster coefficient
as an element of the tropical semifield.
% The key is to introduce a tropical semifield (Definition~\ref{def:P})
% to which the cluster coefficients belong.
We give an explicit formula for the complex volume in terms of the
cluster variables.
% The hyperbolic volume via $y$-variables
% and the complex volume via cluster variables 
% are respectively formulated in \S 4.2 and \S 4.4.

%%%%%
\section{Cluster Algebra and 3-Dimensional Hyperbolic Geometry}
\label{sec:review}
\subsection{Cluster Algebra}

We briefly give  a definition of  the cluster algebras
following~\cite{FominZelev02a,FominZelev07a}.
See these papers for details.
We let $(\mathbb{P}, \oplus, \cdot)$ be a semifield
with a multiplication $\cdot$ and an addition $\oplus$.
This means that $(\mathbb{P}, \cdot)$ 
is an abelian multiplicative group 
endowed with a binary operation $\oplus$ which is
commutative, associative, and distributive 
with respect to the group multiplication $\cdot$. 
Let $\mathbb{QP}$ denote the quotient field of the group ring 
$\mathbb{ZP}$ of $(\mathbb{P},\cdot)$. 
(Note that $\mathbb{ZP}$ is an integral domain \cite{FominZelev02a}.)
Fix $N \in \Z_{>0}$.
Let $\mathbb{QP}(\boldsymbol{u})$ be the rational functional field of 
algebraically independent variables $\{u_k\}_{k=1,\ldots,N}$.
\begin{definition}
  A seed is a triple
  $(
  \boldsymbol{x},
  \boldsymbol{\varepsilon},
  \mathbf{B}
  )$, where
  \begin{itemize}
  \item
    a cluster $\boldsymbol{x}=(x_1, \dots, x_N)$ is an $N$-tuple 
    of elements in $\mathbb{QP}(\boldsymbol{u})$
    such that $\{x_k\}_{k=1,\ldots,N}$ is a free generating set of
    $\mathbb{QP}(\boldsymbol{u})$, 
  \item
    a coefficient tuple
    $\boldsymbol{\varepsilon}=(\varepsilon_1, \dots, \varepsilon_N)$
    is an $N$-tuple of elements in $\mathbb{P}$,

  \item 
    an exchange matrix
    $\mathbf{B}=(b_{ij})$ is an $N\times N$ skew symmetric integer
    matrix.

  \end{itemize}
We call $x_i$ a cluster variable, and $\varepsilon_i$ a 
coefficient.
% for $i=1,\ldots,N$. 
\end{definition}

An important tool in the cluster algebra is a mutation, which relates
cluster seeds.

\begin{definition}
  \label{def:mutation_x}
  Let $(
  \boldsymbol{x},
  \boldsymbol{\varepsilon},
  \mathbf{B}
  )$ be a seed.
  For each $k=1,\ldots,N$, we define the mutation of 
  $(\boldsymbol{x}, \boldsymbol{\varepsilon},\mathbf{B})$
  by $\mu_k$ as
%  Another seed
%  $(
%  \widetilde{\boldsymbol{x}},
%  \widetilde{\boldsymbol{\varepsilon}},
%  \widetilde{\mathbf{B}}
%  )$
%  is related to  a seed
%  $(
%  \boldsymbol{x},
%  \boldsymbol{\varepsilon},
%  \mathbf{B}
%  )$
%  by a  mutation $\mu_k$,
  \begin{equation*}
    \mu_k( \boldsymbol{x}, \boldsymbol{\varepsilon}, \mathbf{B})
    =
    (
    \widetilde{\boldsymbol{x}},
    \widetilde{\boldsymbol{\varepsilon}},
    \widetilde{\mathbf{B}}
    ),
  \end{equation*}
  where
  \begin{itemize}
%  \item the cluster
   \item an $N$-tuple 
    $\widetilde{\boldsymbol{x}}=
    (\widetilde{x}_1,\dots, \widetilde{x}_N)$ 
    of elements in $\mathbb{QP}(\boldsymbol{u})$ is
    \begin{equation}\label{eq:x-mutation}
      \widetilde{x}_i
      =
      \begin{cases}
        x_i,
        & \text{for $i \neq k$,}
        \\[2ex]
        \displaystyle
        \frac{\varepsilon_k}{1 \oplus \varepsilon_k} \cdot
        \frac{1}{x_k}
        \prod_{j: b_{jk} >0} x_j^{~b_{jk}}
        +
        \frac{1}{1 \oplus \varepsilon_k} \cdot
        \frac{1}{x_k}
        \prod_{j: b_{jk}<0} x_j^{~-b_{jk}} ,
        &
        \text{for $i = k$,}
      \end{cases}
    \end{equation}
    
  \item a coefficient tuple 
    $\widetilde{\boldsymbol{\varepsilon}}=(
    \widetilde{\varepsilon}_1, \dots,
    \widetilde{\varepsilon}_N)$
    is
    \begin{equation}
      \label{mutation_epsilon}
      \widetilde{\varepsilon}_i
      =
      \begin{cases}
        \varepsilon_k^{~-1} , &
        \text{for $i=k$},
        \\
        \displaystyle
        \varepsilon_i \,
        \left(
          \frac{\varepsilon_k}{
            1 \oplus \varepsilon_k}
        \right)^{b_{ki}}, &
        \text{for $i \neq k$, $b_{ki} \geq 0$,}
        \\[2ex]
        \displaystyle
        \varepsilon_i \,
        \left( 
          1\oplus \varepsilon_k
        \right)^{-b_{ki}},
        &
        \text{for $i \neq k$, $b_{ki} \leq 0$,}
      \end{cases}
    \end{equation}

  \item 
    a skew symmetric integral matrix
    $\widetilde{\mathbf{B}}=
    (\widetilde{b}_{i j})$ is
    \begin{equation}
      \label{mutation_B}
      \widetilde{b}_{ij}
      =
      \begin{cases}
        -b_{ij},
        & \text{for $i=k$ or $j=k$,}
        \\[2ex]
        \displaystyle
        b_{ij} + \frac{1}{2} \left(
          | b_{ik} | \, b_{kj} +
          b_{ik} \, | b_{kj}| 
        \right),
        & \text{otherwise.}
      \end{cases}
    \end{equation}
    
  \end{itemize}
Note that the resulted triple $(
    \widetilde{\boldsymbol{x}},
    \widetilde{\boldsymbol{\varepsilon}},
    \widetilde{\mathbf{B}}
    )$ is again a seed.    
We remark that $\mu_k$ is involutive,
and that $\mu_j$ and $\mu_k$ are commutative if
and only if $b_{jk}= b_{kj}= 0$. 
%vertices $j$ and $k$ are disjoint, 
\end{definition}

By starting from an initial seed 
$(\boldsymbol{x}, \boldsymbol{\varepsilon}, \mathbf{B})$,
we iterate mutations and collect all obtained seeds.  
The cluster algebra 
$\mathcal{A}(\boldsymbol{x}, \boldsymbol{\varepsilon}, \mathbf{B})$ 
is the $\mathbb{ZP}$-subalgebra of the rational function field 
$\mathbb{Q P}(\boldsymbol{u})$ 
generated by all the cluster variables.
In fact, in this paper we do not need 
the cluster algebra itself, but the seeds and the mutations.
Further, we use the following:

\begin{prop}[\cite{FominZelev07a}]
  \label{thm:y_from_xe}
  For a seed 
  $(\boldsymbol{x}, \boldsymbol{\varepsilon}, \mathbf{B})$,
  let $\boldsymbol{y}$ be an
  $N$-tuple $\boldsymbol{y}=(y_1,\dots, y_N)$ in 
  $\mathbb{Q P}(\boldsymbol{u})$ defined as
%  defined by use of cluster~$\boldsymbol{x}$ and
%  coefficient~$\boldsymbol{\varepsilon}$ as
  \begin{equation}
    \label{y_from_xe}
    y_j = \varepsilon_j \prod_k x_k^{~b_{kj}} .
  \end{equation}
%  Then a mutation on $y$ is induced from Def.~\ref{def:mutation_x} as
%  \begin{equation}
%    \mu_k(
%    \boldsymbol{y},
%    \mathbf{B}
%    )
%    =
%    (
%    \widetilde{\boldsymbol{y}},
%    \widetilde{\mathbf{B}}
%    ) ,
%  \end{equation}
%  where
 Then the mutation of 
 $(\boldsymbol{x}, \boldsymbol{\varepsilon}, \mathbf{B})$
 in Def.~\ref{def:mutation_x}  induces 
 a mutation of a pair $(\boldsymbol{y}, \mathbf{B})$,
  \begin{equation}
    \mu_k(
    \boldsymbol{y},
    \mathbf{B}
    )
    =
    (
    \widetilde{\boldsymbol{y}},
    \widetilde{\mathbf{B}}
    ) ,
  \end{equation}
  where

  \begin{itemize}
  \item
    $\widetilde{\boldsymbol{y}}=
    (\widetilde{y}_1,\dots, \widetilde{y}_N)$ is
    analogous to~\eqref{mutation_epsilon},
    \begin{equation}
      \widetilde{y}_i
      =
      \begin{cases}
        y_k^{~-1}, & \text{for $i=k$,}
        \\
        % 
        % y_i \, y_k^{\left[b_{ki} \right]_+} \, 
        % ( 1+ y_k )^{-b_{ki}} ,
        % &\text{for $i\neq k$}    
        \displaystyle
        y_i \, \left( \frac{y_k}{1+y_k} \right)^{b_{ki}},
        & \text{for $i \neq k$, $b_{ki} \geq 0$,}
        \\[2ex]
        y_i \, \left( 1+y_k \right)^{-b_{ki}} ,
        & \text{for $i \neq k$, $b_{ki} \leq 0$,}
      \end{cases}
    \end{equation}

  \item 
    $\widetilde{\mathbf{B}}=(\widetilde{b}_{ij})$ is~\eqref{mutation_B}.
  \end{itemize}
\end{prop}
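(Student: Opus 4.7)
The plan is to verify the assertion by direct substitution: insert the mutation rules (\ref{eq:x-mutation}), (\ref{mutation_epsilon}) and (\ref{mutation_B}) into the defining relation $\widetilde{y}_j = \widetilde{\varepsilon}_j \prod_k \widetilde{x}_k^{\,\widetilde{b}_{kj}}$ and match the result against the claimed formula, splitting by whether $i=k$ or $i\neq k$.

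The case $i=k$ is immediate: since $\widetilde{x}_j = x_j$ for $j\neq k$, $\widetilde{b}_{jk}=-b_{jk}$, and $\widetilde{\varepsilon}_k=\varepsilon_k^{-1}$, one finds $\widetilde{y}_k = \varepsilon_k^{-1}\prod_j x_j^{-b_{jk}} = y_k^{-1}$ at once.

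For $i\neq k$, I would first rewrite the $x$-exchange so that $y_k$ becomes visible. Setting $P_+ = \prod_{j:b_{jk}>0}x_j^{\,b_{jk}}$ and $P_- = \prod_{j:b_{jk}<0}x_j^{\,-b_{jk}}$, so that $y_k = \varepsilon_k P_+/P_-$, relation (\ref{eq:x-mutation}) collapses to
\[
  x_k\,\widetilde{x}_k = \frac{P_-\,(1+y_k)}{1\oplus\varepsilon_k}.
\]
Next, (\ref{mutation_B}) yields $\prod_{j\neq k}x_j^{\,\widetilde{b}_{ji}-b_{ji}} = P_+^{\,b_{ki}}$ when $b_{ki}\ge 0$ and $P_-^{\,-b_{ki}}$ when $b_{ki}\le 0$. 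Combining this with the elementary identity $\prod_{j\neq k} x_j^{\,b_{ji}} = y_i/(\varepsilon_i\, x_k^{\,b_{ki}})$ and with the factor $\widetilde{x}_k^{\,-b_{ki}}$ expresses the $x$-dependent part of $\widetilde{y}_i$ as $(y_i/\varepsilon_i)$ times appropriate powers of $y_k/\varepsilon_k$ and of $(1\oplus\varepsilon_k)/(1+y_k)$. Finally, the coefficient mutation (\ref{mutation_epsilon}) supplies exactly the missing factors of $\varepsilon_i$, $\varepsilon_k$, and $1\oplus\varepsilon_k$ needed to cancel everything except the desired $y_i\,(y_k/(1+y_k))^{b_{ki}}$ in the first case, or $y_i\,(1+y_k)^{-b_{ki}}$ in the second.

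The main obstacle is organizing this cancellation cleanly. Both $\widetilde{\varepsilon}_i$ and the rewritten exchange relation introduce competing powers of $1\oplus\varepsilon_k$ and $\varepsilon_k$, and the content of the proposition is that they combine so that the semifield operation $\oplus$ disappears from $\widetilde{y}_i$ altogether and is replaced by the ordinary sum $1+y_k$. This has to be checked in both sign regimes of $b_{ki}$, which invoke different branches of (\ref{mutation_epsilon}) and (\ref{mutation_B}); the degenerate case $b_{ki}=0$ is trivial on both sides.
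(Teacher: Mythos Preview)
Your strategy is correct and is exactly the standard direct verification (this is essentially how Fomin and Zelevinsky themselves establish it in~\cite{FominZelev07a}). Note, however, that the paper does not give its own proof of this proposition: it is stated as a citation from~\cite{FominZelev07a} and used as a black box, so there is no argument in the text to compare yours against.

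One small slip to fix when you write it out: in the case $b_{ki}\le 0$ you claim $\prod_{j\neq k} x_j^{\,\widetilde{b}_{ji}-b_{ji}} = P_-^{\,-b_{ki}}$, but the correct exponent is $b_{ki}$, not $-b_{ki}$. Indeed, for $b_{ki}\le 0$ the mutation rule gives $\widetilde{b}_{ji}-b_{ji} = b_{ki}\,[-b_{jk}]_+$, so the product over $j$ with $b_{jk}<0$ has exponent $(-b_{jk})\,b_{ki}$, i.e.\ $P_-^{\,b_{ki}}$. With this correction the cancellation you describe goes through exactly as you outline, and the $1\oplus\varepsilon_k$ factors from $\widetilde{\varepsilon}_i$ and from $x_k\widetilde{x}_k$ annihilate each other in both sign regimes.
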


% where
% $\left[ x \right]_+ = \max(x, 0 )$.

This proposition holds for an arbitrary semifield $(\mathbb{P},\oplus,\cdot)$.
In this paper we call $y_i$
% ~(i=1,\ldots,N)$
a cluster $y$-variable,
or a $y$-variable.
Hereafter we use   the following
tropical semifield~\cite{FominZelev07a}.

%{\color{blue}
\begin{definition}\label{def:P}
%Let $\mathbb{P}_0 = \{\delta^k ~|~ k \in \mathbb{Z}\}$ be
%a multiplicative group generated by a variable $\delta$
%with a usual multiplication $\cdot$. 
%given by $\delta^{k_1} \cdot \delta^{k_2} = \delta^{k_1+k_2}$.
  Set $\mathbb{P} = \{\delta^k ~|~ k \in \mathbb{Z}\}$.
  Let $(\mathbb{P}, \oplus, \cdot)$ be
  a semifield generated by a variable $\delta$
  with a multiplication $\cdot$ and an addition $\oplus$,
  \begin{equation}
  % \delta^{k_1} \cdot \delta^{k_2}
  % =
  % \delta^{k_1+k_2},
  % \qquad
    \delta^{k_1} \oplus \delta^{k_2}
    =
    \delta^{\min(k_1,k_2)} .
%  \quad 
%  \text{ for }k_1,k_2 \in \mathbb{Z}. 
%  \qquad 
  \end{equation}
%We have $1=\delta^0$. 
\end{definition}

% This is the simplest example of the tropical semifield
% \cite{FominZelev07a}.
% Elements in $\mathbb{P}$ are not always monomials
% appearently. 
% For example, 
% $\delta^1 \cdot (1 \oplus \delta^1 \oplus \delta^{-2})
% = \delta^{-1}$ in the semifield.

%}

\begin{definition}\label{def:beta}
  We define a map 
  $\psi: \mathbb{P} \, {\longrightarrow} \, \{-1,1\}$,
  given by
  substituting $\delta=-1$ in elements of $\mathbb{P}$.
%   where we substitute $\delta = -1$ after evaluating 
% each element in $\mathbb{P}$ to be a momomial.
%which naturally induces the map 
%on the cluster variables. We write $\psi$ also for the induced map.
\end{definition}

For the later use, we introduce the permutation 
acting on seeds.

\begin{definition}\label{def:s}
For $i,j \in \{1,\ldots,N\}$ and $i \neq j$, 
let $s_{i,j}$ be a permutation
of subscripts $i$ and $j$ in seeds.
For example a permuted cluster $s_{i,j}(\boldsymbol{x})$ is defined by
\begin{align*}\label{eq:s-action}
  s_{i,j}(\cdots, x_i, \cdots, x_j, \cdots)
  =
  (\cdots, x_j, \cdots, x_i, \cdots).
  % &(s_{ij}(\boldsymbol{x}))_k   
  % =
  % \begin{cases}
  %   x_j & \text{ for } k=i, \\
  %   x_i & \text{ for } k=j, \\
  %   x_k & \text{ otherwise}.
  % \end{cases}
%%%%%%%%
%\\
%&\overline{b}_{kl}
%=
%\begin{cases}
%b_{il} \text{ (resp. $b_{jl}$)}
%& \text{ for $k=j$ (resp. $k=i$) and $l \neq i,j$}, 
%\\
%b_{ki} \text{ (resp. $b_{kj}$)} 
%& \text{ for $l=j$ (resp. $l=i$) and $k \neq i,j$}, 
%\\
%-b_{kl} 
%& \text{ for $(k,l) = (j,i)$ or $(k,l)=(i,j)$},
%\\ 
%b_{kl} & \text{ otherwise}.
%\end{cases}
\end{align*}
Actions on 
$\boldsymbol{\varepsilon}$
and $\mathbf{B}$ are defined
%The induced $s_{ij}(\boldsymbol{y})$
in the same manner.
They induce an
action on  $\boldsymbol{y}$, and 
$s_{i,j}(\boldsymbol{y})$ has a same form.
\end{definition}

%%%%
\subsection{Hyperbolic Geometry}

A fundamental object in
the  three-dimensional hyperbolic geometry is an
ideal hyperbolic  tetrahedron $\triangle$
in Fig.~\ref{fig:tetrahedron}~\cite{WPThurs80Lecture}.
The tetrahedron is parameterized by a modulus $z\in \mathbb{C}$,
and each dihedral angle  is given as in the figure.
We mean $z^\prime$ and $z^{\prime\prime}$
for given modulus $z$ by
\begin{align}
  z^\prime &= 1- \frac{1}{z} ,
  &
  z^{\prime \prime}
  & = \frac{1}{1-z} .
\end{align}
The cross section by the horosphere at each vertex
is similar to the triangle in
$\mathbb{C}$ with vertices $0$, $1$, and $z$ as 
in Fig.~\ref{fig:triangle}.
In Fig.~\ref{fig:tetrahedron}, 
we give an orientation to tetrahedron by
assigning a vertex ordering~\cite{Zicke09}, which is crucial in
computing the complex
volume of tetrahedra modulo $\pi^2$.

\begin{figure}[tbhp]
  \centering
  \includegraphics[]{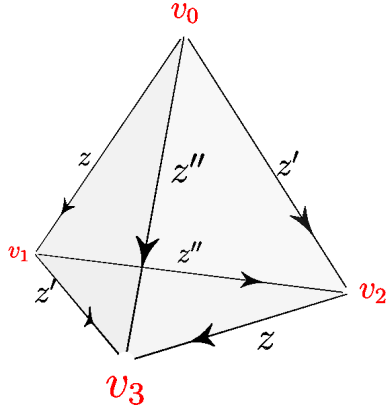}
  \caption{An ideal  hyperbolic tetrahedron $\triangle$ with modulus
    $z$.
    All $4$ vertices are on  $\partial\mathbb{H}^3$,
    and edges are geodesics in $\mathbb{H}^3$.
    Dihedral angles between pairs of faces
    are parametrized by
    $z$, $z^\prime=1-1/z$, and $z^{\prime\prime}=1/(1-z)$.
    To each vertex, we give a  vertex ordering
    $v_0$, $v_1$, $v_2$, and  $v_3$.
    Then an     orientation of $\triangle$  is induced when
    we give an orientation to an edge from $v_a$ to $v_b$ ($a<b$).
  }
  \label{fig:tetrahedron}
\end{figure}

\begin{figure}[tbhp]
  \centering
  \includegraphics[]{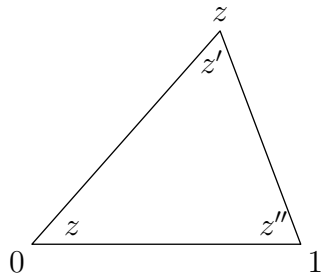}
  \caption{A triangle in $\mathbb{C}$ with vertices $0$, $1$, and $z$.}
  \label{fig:triangle}
\end{figure}

The hyperbolic volume of an ideal tetrahedron $\triangle$ with modulus $z$
is given by the Bloch--Wigner function
\begin{equation}
  \label{BW}
  D(z)
  =
  \Imaginary \Li(z) + \arg(1-z) \, \log|z| .
\end{equation}
Here $\Li(z)$ is the dilogarithm function,
\begin{equation*}
  \Li(z)
  =
  \sum_{n=1}^\infty \frac{z^n}{n^2} ,
\end{equation*}
for $|z|<1$, and the analytic continuation for $z \in \mathbb{C}
\setminus [1,\infty)$
is given by
\begin{equation*}
  \Li(z)
  =
  - \int_0^z \log (1-s) \, \frac{\mathrm{d}s}{s} .
\end{equation*}
Note that
\begin{equation}
  \label{relation_BW}
  \begin{aligned}[b]
    & D(z) = D(z^\prime) = D(z^{\prime\prime})
    \\
    & = - D(1/z) = - D(1/z^\prime) = - D(1/z^{\prime \prime}) .
  \end{aligned}
\end{equation}
See, \emph{e.g.},~\cite{Zagier07a} for details of the dilogarithm function.

% The  ideal tetrahedron $\triangle$ in Fig.~\ref{fig:tetrahedron} is a
% fundamental object in studies of
% hyperbolic $3$-manifold.
We study the case that a cusped hyperbolic $3$-manifold $M$ is
triangulated into
a set of ideal
tetrahedra~$\{\triangle_\nu\}$.
It is known that
the modulus $z_\nu$ of each ideal tetrahedron~$\triangle_\nu$
is determined from two conditions.
One is
a set of  gluing equations, which means
that dihedral angles around each
edge sum up to $2\pi$.
Another is
a cusp  condition so that $M$ has a complete hyperbolic structure.
Then the hyperbolic volume of $M$ is given by
\begin{equation}
  \label{volM}
  \Vol(M) = \sum_\nu D(z_\nu) .
\end{equation}
See~\cite{WPThurs80Lecture,NeumZagi85a,WDNeum92a} for details.

The complex volume, $\Vol(M)+ \I \, \CS(M)$,
of $M$ is   a complexification of~\eqref{volM}, and
in view of the Bloch--Wigner function~$D(z)$~\eqref{BW},
it is natural to study the dilogarithm function $\Li(z)$.
Although,
in contrast to $D(z)$, the dilogarithm
function $\Li(z)$ is a multi-valued function, and
we need 
a ``flattening'',
\emph{i.e.}, the moduli of ideal tetrahedra with
additional parameters $p$ and $q$.
%The complex volume is a complexification of hyperbolic volume,
%\begin{equation*}
%  \Vol(M)+ \I \, \CS(M) ,
%\end{equation*}
%where $\CS$ is the Chern--Simons invariant.

\begin{definition}[\cite{WDNeum00a}]
  A flattening  of an ideal tetrahedron $\triangle$
  is
  \begin{equation}
    \label{log_dihedral}
    (w_0,w_1,w_2)=
    \left(
      \log z + p \, \pi \, \I ,
      -\log(1-z) + q \, \pi \I,
      \log(1-z) - \log z - (p+q) \, \pi \, \I
    \right) ,
  \end{equation}
  where $z$ is the modulus of $\triangle$ and $p,q \in\mathbb{Z}$.
  We use $(z;p,q)$ to denote the flattening of $\triangle$.
\end{definition}

By use of the flattening~$(z;p,q)$,
 we introduce~\cite{WDNeum92a}
an extended Rogers
dilogarithm function
by
% a variant of the dilogarithm function,
\begin{equation}
  \widehat{L}(z;p,q)
  =
  \Li(z)+
  \frac{1}{2} \log z \, \log(1-z) +
  \frac{\pi \, \I}{2} \,
  \left(
    q \log z + p \, \log(1-z)
  \right) - \frac{\pi^2}{6} ,
\end{equation}
where $p, q \in \mathbb{Z}$.
%which is 
%which is related to complex volume of manifold~$M$.
Here and hereafter,
we mean the 
principal branch in the logarithm.
In~\cite{WDNeum00a}, the extended pre-Bloch group is defined as the
free abelian group on
the  flattenings subject to a lifted five-term relation,
and
it is shown that the complex volume can be given as follows.
%Then the complex volume is given from the extended pre-Bloch group.

\begin{prop}[\cite{WDNeum00a}]
  The complex volume of $M$ is 
  \begin{equation}
    \label{complex_volume_M}
    \I \,\left( \Vol(M)+\I \, \CS(M) \right)
    =
    \sum_\nu
    \sign(\nu) \,
    \widehat{L}(z_\nu; p_\nu , q_\nu) 
    \mod \pi^2,
  \end{equation}
  where $(z_\nu; p_\nu , q_\nu)$ is a flattening of $\triangle_\nu$,
  and 
  $\sign(\nu)$ is $1$ (resp. $-1$) when 
  the orientation of $\triangle_\nu$ is 
  same with (resp. opposite to)
  that of~$\triangle$ in Fig.~\ref{fig:tetrahedron}.
\end{prop}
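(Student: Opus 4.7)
The plan is to decompose the identity into its imaginary part (which should recover $\Vol(M)$) and its real part (which should yield $-\CS(M)$), and to work within Neumann's extended Bloch group machinery. Since the proposition is quoted from \cite{WDNeum00a}, my strategy follows that reference in spirit.

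First, I would check the imaginary part directly. From the definition,
\[
\Imaginary \widehat{L}(z;p,q) = D(z) + \tfrac{1}{2}\bigl(\arg z \, \log|1-z| - \log|z| \, \arg(1-z)\bigr) + \tfrac{\pi}{2}\bigl(q\log|z| + p\log|1-z|\bigr).
\]
When summed over all ideal tetrahedra $\triangle_\nu$ with signs $\sign(\nu)$, the two correction brackets cancel as a consequence of the gluing equations around each edge (logarithms of dihedral-angle moduli sum to $2\pi\I$ per edge) and the cusp completeness condition (the derived log-holonomies of the peripheral curves vanish). What survives is $\sum_\nu \sign(\nu) D(z_\nu) = \Vol(M)$ by \eqref{volM}.

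Next, for the real part, I would appeal to the identification of $\widehat{L}$ with a regulator on the extended Bloch group $\widehat{\mathcal{B}}(\mathbb{C})$ defined in \cite{WDNeum00a}. The oriented, flattened triangulation defines a class $\sum_\nu \sign(\nu)[z_\nu;p_\nu,q_\nu]$ in $\widehat{\mathcal{B}}(\mathbb{C})$, and $\widehat{L}$ descends to a well-defined homomorphism $\widehat{\mathcal{B}}(\mathbb{C}) \to \mathbb{C}/\pi^2 \Z$. Composing with the above construction produces Cheeger's regulator for the Cheeger--Chern--Simons class of the flat $\mathrm{PSL}(2,\mathbb{C})$-connection on $M$, whose real part is, by definition, $-\CS(M)$. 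The sign in the convention $\I(\Vol + \I\,\CS) = -\CS + \I\,\Vol$ accounts for the factor of $\I$ on the left-hand side of \eqref{complex_volume_M}.

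The main obstacle is to prove well-definedness modulo $\pi^2$, independently of the triangulation and of the choice of flattening parameters $(p_\nu,q_\nu)$. This amounts to (i) establishing the \emph{lifted} five-term relation for $\widehat{L}$, which ensures invariance under $2$--$3$ Pachner moves; (ii) verifying that admissible shifts of flatteners $(p_\nu,q_\nu) \mapsto (p_\nu + 2a_\nu, q_\nu + 2b_\nu)$ compatible with gluing and cusp equations change the sum only by integer multiples of $\pi^2$; and (iii) checking that changes of vertex ordering permute flatteners in a manner consistent with the sign $\sign(\nu)$. The lifted five-term computation is the technical heart of \cite{WDNeum00a} and requires careful tracking of logarithm branches; once it is in place, invariance under any two triangulations follows formally from Pachner's theorem, and identification with the Cheeger--Chern--Simons class completes the proof.
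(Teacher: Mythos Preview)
The paper does not supply its own proof of this proposition: it is stated as a result quoted from \cite{WDNeum00a} and is immediately followed by Zickert's proposition, with no intervening argument. So there is nothing in the paper to compare your proposal against.

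That said, your sketch is a faithful outline of the ideas in Neumann's paper. Your computation of the imaginary part is correct, and the appeal to the extended Bloch group, the lifted five-term relation, and the identification with the Cheeger--Chern--Simons class is exactly the structure of Neumann's argument. One small caveat: in your step~(i) you say invariance under any two triangulations ``follows formally from Pachner's theorem''; for ideal triangulations of cusped manifolds this is more delicate than the closed case, and Neumann's argument instead works at the level of the fundamental class in $H_3(\mathrm{PSL}(2,\mathbb{C});\mathbb{Z})$ rather than literally invoking Pachner moves. This is a technical point but worth being aware of if you intend to fill in the details.
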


Zickert  clarified that the flattening $(z; p, q)$ can be given by 
complex  parameters assigned to  
the edges of an ideal tetrahedron in the following way.

%Zickert  clarified that complex numbers $c_{ab}$ 
%aassigned to
%an edge connecting $v_a$ and
%$v_b$ 
%in Fig.~\ref{fig:tetrahedron} give the flattening
%$(z; p, q)$
%as follows.

\begin{prop}[\cite{Zicke09}]
  \label{prop:Zickert}
  For the ideal tetrahedron $\triangle$ in Fig.~\ref{fig:tetrahedron},
  let $c_{ab}$ be a complex parameter on the edge connecting vertices 
  $v_a$ and $v_b$ for $0 \leq a < b \leq 3$. 
  When these complex parameters satisfy 
  \begin{align}
    \label{Zickert_c}
    &
    \frac{c_{03} \, c_{12}}{c_{02} \, c_{13}}
    = \pm z,
    &
    &
    \frac{c_{01} \, c_{23}}{c_{03} \, c_{12}}
    =
    \pm \left( 1 - \frac{1}{z} \right),
    &
    &
    \frac{c_{02} \, c_{13}}{c_{01} \, c_{23}}
    =
    \pm \frac{1}{1-z},
  \end{align}
  the flattening $(z;p,q)$ is given by
  \begin{equation}
    \label{Zickert_log_c}
    \begin{aligned}
      \log z + p \, \pi \, \I 
      & =
      \log c_{03} + \log c_{12} - \log c_{02} - \log c_{13} ,
      \\[1ex]
      -\log(1-z)+ q\, \pi \, \I
      & =
      \log c_{02} + \log c_{13} - \log c_{01} - \log c_{23} .
    \end{aligned}
  \end{equation}
  In gluing tetrahedra to construct a three-manifold $M$,
  edges identified in $M$ are required to
  have the same complex numbers.
\end{prop}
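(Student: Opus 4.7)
The plan is to verify that the triple $(w_0,w_1,w_2)$ extracted from the edge parameters $c_{ab}$ via~\eqref{Zickert_log_c} is in fact a flattening in the sense of~\eqref{log_dihedral}; this amounts to checking (a) that the would-be integers $p,q$ really lie in $\mathbb{Z}$ and (b) that the triple satisfies the linear constraint $w_0+w_1+w_2=0$ built into the flattening.

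First I would examine the algebraic consistency of~\eqref{Zickert_c}. Multiplying the three ratios telescopes the left-hand side to $1$, while the right-hand side evaluates to $\pm\, z\cdot (1-1/z)\cdot 1/(1-z)=\mp 1$. Hence the three sign choices must have product $-1$; this reflects the identity $z\cdot z'\cdot z''=-1$ that governs the dihedral moduli, and is exactly what forces the logarithmic sum relation used below.

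Next, I would set
\begin{equation*}
  A=\log c_{03}+\log c_{12}-\log c_{02}-\log c_{13}, \qquad
  B=\log c_{02}+\log c_{13}-\log c_{01}-\log c_{23},
\end{equation*}
and $C=\log c_{01}+\log c_{23}-\log c_{03}-\log c_{12}$, noting that $A+B+C=0$ identically, with no branch ambiguity. Exponentiating~\eqref{Zickert_c} and taking the principal logarithm, each of $A-\log z$, $B+\log(1-z)$, and $C-\log(1-1/z)$ is an integer multiple of $\pi\,\I$: a $2\pi\,\I$ contribution from any branch mismatch of $\log(xy)$ vs.\ $\log x+\log y$ combined with a $\pi\,\I$ contribution whenever the sign in~\eqref{Zickert_c} is negative, and both land in $\pi\,\I\,\mathbb{Z}$. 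Writing $A=\log z+p\,\pi\,\I$ and $B=-\log(1-z)+q\,\pi\,\I$ with $p,q\in\mathbb{Z}$, the identity $A+B+C=0$ then forces
\begin{equation*}
  C=\log(1-z)-\log z-(p+q)\,\pi\,\I,
\end{equation*}
which coincides precisely with the third entry $w_2$ in~\eqref{log_dihedral}. So $(A,B,C)=(w_0,w_1,w_2)$ is a legitimate flattening $(z;p,q)$, establishing~\eqref{Zickert_log_c}.

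The main subtlety is bookkeeping the branch corrections so that $p$ and $q$ are provably integers rather than arbitrary purely imaginary numbers; this rests on the two elementary facts $\log(xy)-\log x-\log y\in 2\pi\,\I\,\mathbb{Z}$ and $\log(-x)-\log x\in\pi\,\I\,(2\mathbb{Z}+1)$, combined with the sign compatibility of step one. The final statement about edges glued in $M$ is then a direct corollary of the construction: each parameter $\log c_{ab}$ attached to a glued edge appears with opposite signs in the two adjacent tetrahedra's contributions to $A$, $B$, or $C$, so imposing equality of the $c_{ab}$ is what ensures a well-defined global flattening on the triangulated manifold.
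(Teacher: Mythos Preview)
The paper does not supply a proof of this proposition; it is quoted from~\cite{Zicke09} and followed immediately by Remark~\ref{rem:cv}, so there is no in-paper argument to compare against.

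That said, your verification is correct as far as it goes: the telescoping of the three ratios in~\eqref{Zickert_c} forces the product of signs to be $-1$, the quantities $A,B,C$ satisfy $A+B+C=0$ identically, and the branch bookkeeping shows $p,q\in\mathbb{Z}$ and recovers the third entry $w_2$ of~\eqref{log_dihedral} automatically. This is an adequate self-contained check that~\eqref{Zickert_log_c} really defines a flattening.

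Where your write-up differs in emphasis from Zickert's original is in what is being asserted. In~\cite{Zicke09} the substance of the result is the \emph{construction} of the edge parameters $c_{ab}$ from a boundary-parabolic representation (via a developing map, as the paper alludes to in Remark~\ref{rem:cv}); once such parameters exist and satisfy~\eqref{Zickert_c}, the fact that~\eqref{Zickert_log_c} yields a flattening is, as you show, a short logarithm calculation. Your last paragraph slightly inverts the logic: the equality of $c_{ab}$ on identified edges is a hypothesis coming from the construction, not a corollary of the flattening formula, and it is this hypothesis that makes the collection of flattenings satisfy the edge relations needed for~\eqref{complex_volume_M}. If you want the paragraph to stand on its own, rephrase it as a requirement rather than a consequence.
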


\begin{remark}
  \label{rem:cv}
  It was demonstrated in~\cite{Zicke09}
  that
  these  complex parameters $c_{ab}$ can be read from a developing
  map such as Figs.~\ref{fig:cusp_torus} and~\ref{fig:cusp_sphere_1}.
  To summarize,
  when we have a triangulation
  $\{\triangle_\nu\}$ with modulus $z_\nu$
  of
  a hyperbolic 3-manifold $M$,
  the flattering $(z_\nu; p_\nu, q_\nu)$ can be given from the above
  with edge parameters $c_{ab}$, and
  we get
  the complex volume of $M$ from~\eqref{complex_volume_M}.
\end{remark}

%%%%%% 
\subsection{Interrelationship}
\label{sec:interrelation}
% \begin{figure}[htbp]
%   \begin{center}
%     \includegraphics[]{quiver12}
%   \end{center}
  
%   \caption{Quiver mutation}
%   \label{fig:example_mutation}
% \end{figure}

Correspondence between the cluster algebra and the hyperbolic geometry
can be seen in a simple example.\footnote{We thank T. Dimofte.}
We study a triangulated  surface and its flip as in
Fig.~\ref{fig:example_flip}.
A triangulation is related to a 
quiver, \emph{i.e.}, a directed  graph,
where the number of edges in 
the  triangulation is equal to
the fixed number~$N$ in the cluster algebra~\cite{FomiShapThur08a}.
A  flip can be regarded as a 
cluster mutation, as depicted in the figure.
Note that the exchange matrix $\mathbf{B}=(b_{ij})$ can be read from
the quiver by
\begin{equation*}
  b_{ij} =
  \#\{\text{arrows from $i$ to $j$}\}
  -
 \#\{\text{arrows from $j$ to $i$}\} .
\end{equation*}
By definition~\eqref{y_from_xe},  the mutation
$\mu_3(\boldsymbol{y},\mathbf{B})=
(
\widetilde{\boldsymbol{y}},
\widetilde{\mathbf{B}}
)$, is explicitly written as
\begin{equation}
  \label{y_attachment}
  \begin{aligned}
    \widetilde{y}_1 &= y_1 \, (1 + y_3) ,
    \\
    \widetilde{y}_2 &= y_2 \, (1 + y_3^{~-1})^{-1} ,
    \\
    \widetilde{y}_3 &= y_3^{~-1} ,
    \\
    \widetilde{y}_4 &= y_4 \, (1 + y_3^{~-1})^{-1} ,
    \\
    \widetilde{y}_5 &= y_5 \, (1 + y_3) .
  \end{aligned}
\end{equation}

\begin{figure}[tbhp]
  \centering
  \includegraphics[]{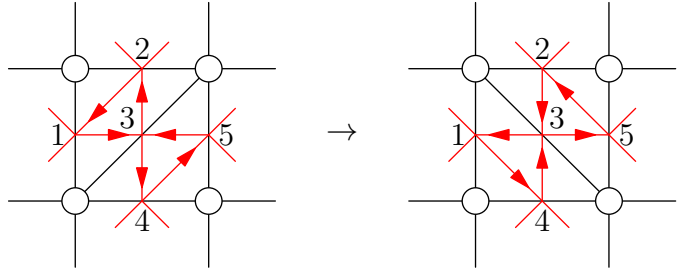}
  \caption{
    A flip of a triangulated  punctured surface.
    % A flip is   to erase one edge which is a boundary of two triangles
    % and to add a new edge.
    Also depicted are the
    quivers associated to the  triangulated surfaces.}
  \label{fig:example_flip}
\end{figure}

\begin{figure}[tbhp]
  \centering
  \includegraphics[scale=0.8]{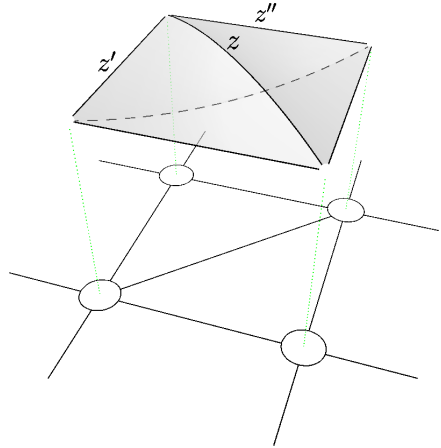}
  \caption{A flip and an attachment of pleated ideal  tetrahedron.}
  \label{fig:attach_tetrahedron}
\end{figure}

On the other hand, we  may regard the  flip  in
Fig.~\ref{fig:example_flip} as 
an attachment of an ideal tetrahedron~$\triangle$ with
modulus~$z$ whose faces are pleated.
See Fig.~\ref{fig:attach_tetrahedron}.
When we denote by $z_k$ the  dihedral angle on edge~$k$ labeled as in
Fig.~\ref{fig:example_flip},  
a dihedral angle $\widetilde{z}_k$  after attaching $\triangle$
is given by
\begin{equation}
  \label{z_attachment}
  \begin{aligned}
    \widetilde{z}_1 & = z_1 \, z^\prime ,\\
    \widetilde{z}_2 & = z_2 \, z^{\prime\prime} ,\\
    \widetilde{z}_3 & = z ,\\
    \widetilde{z}_4 & = z_4 \, z^{\prime \prime} ,\\
    \widetilde{z}_5 & = z_5 \, z^\prime ,\\
  \end{aligned}
\end{equation}
with a hyperbolic gluing condition
\begin{equation*}
  z_3 \, z = 1.
\end{equation*}

Comparing~\eqref{y_attachment} with~\eqref{z_attachment}, we observe
that the cluster $y$-variable is related to
the  dihedral angle by
\begin{equation*}
  y_k = - z_k ,
\end{equation*}
and especially
a  modulus of the ideal
tetrahedron $\triangle$ is given by
\begin{equation}
  \label{z_and_y_mutation}
  z =  - \frac{1}{y_3} .
%  y_k = - \frac{1}{z_k} .
\end{equation}
See that  a subscript ``3''  denotes 
a label of vertex in the quiver
where
the mutation ($\mu_3$) was applied.

%To conclude, the cluster mutation $\mu_k$ corresponds to an attachment of
%ideal hyperbolic tetrahedron with modulus $-1/y_k$.

Our idea on a geometrical role of the cluster variable ${x_k}$ 
is as follows.
We know from~\eqref{y_from_xe} that
the $y$-variable is written as
\begin{equation*}
  y_3 = \varepsilon_3 \, \frac{x_1 \, x_5}{x_2 \, x_4}.
\end{equation*}
We also see from \eqref{eq:x-mutation} that 
the mutation $\mu_3$ sends a cluster variable $x_3$ to
$$
  \widetilde{x}_3
  =
  \frac{\ve_3}{1 \oplus \ve_3} \cdot \frac{x_1 \, x_5}{x_3}
  + \frac{1}{1 \oplus \ve_3} \cdot \frac{x_2 \, x_4}{x_3},
$$
where we take the tropical semifield of Def.~\ref{def:P}.
Thus, using \eqref{z_and_y_mutation} we get
\begin{align*}
  &
  z = -\frac{1}{\ve_3} \cdot \frac{x_2 \, x_4}{x_1 \, x_5},
  &
  &
  \frac{1}{1-z} 
  = \frac{\ve_3}{1 \oplus \ve_3} \cdot 
  \frac{x_1 \, x_5}{\widetilde{x}_3 \, x_3}.
\end{align*}
When we apply the map~$\psi$ in  Def.~\ref{def:beta} to the above,
the coefficient parts including~$\varepsilon_3$ are~$\pm 1$.
By comparing these formulae with~\eqref{Zickert_c},
we notice  that the cluster variables $x_i$ play a role of
Zickert's parameters $c_{ab}$.
%, and that the other parts
% correspond to the sign, $+$ or $-$, appearing in \eqref{Zickert_c}.
% To get the sign, we use the map $\psi$ in  Def.~\ref{def:beta}. 

%A geometrical role of the cluster variable ${x_k}$ is now clear.
%We know from~\eqref{y_from_xe} that
%the $y$-variable is written as
%\begin{equation*}
%  y_3 = \varepsilon_3 \, \frac{x_1 \, x_5}{x_2 \, x_4} .
%\end{equation*}
%Recalling Prop.~\ref{prop:Zickert},
%we find  that the cluster $x$-variable plays a role of
%Zickert's   parameter $c_{ab}$ assigned to edges of ideal tetrahedron.

%%%%%%%%%%%
\section{\mathversion{bold}Once-Punctured Torus Bundle over $S^1$}
\label{sec:torus}
\subsection{\mathversion{bold}$y$-pattern and Hyperbolic Volume}

Let  $\Sigma_{1,1}$  be a once-punctured torus,
$(\mathbb{R}^2 \setminus \mathbb{Z}^2)/\mathbb{Z}^2$.
We set $M_\varphi$
to be the once-punctured torus bundle over the circle, 
whose monodromy is determined by a mapping class 
$\varphi \in SL(2;\mathbb{Z})$.
More precisely, 
via $\varphi: \Sigma_{1,1} \to \Sigma_{1,1}$
we define
an  identification $(x,0) \sim (\varphi(x), 1)$ 
for $\forall x \in \Sigma_{1,1}$, 
and set $M_\varphi = \Sigma_{1,1} \times [0,1]/\sim$.
%
%We set $M_\varphi$
%to be the once-punctured torus bundle over the circle , 
%$M_\varphi = \Sigma_{1,1} \times [0,1]/\sim$,
%where $\sim$ identifies
%$(x,0)$ with $(\varphi(x), 1)$ for $\forall x \in \Sigma_{1,1}$.
%A mapping class group $\varphi: \Sigma_{1,1} \to \Sigma_{1,1}$
%is  $\varphi \cong SL(2;\mathbb{Z})$.
It is known that $M_\varphi$ is hyperbolic when $\varphi$ has distinct
real eigenvalues.
Up to conjugation we have
\begin{equation}
  \label{phi_RL}
  \varphi
  =
  {R}^{s_1} {L}^{t_1} \cdots
  {R}^{s_n} {L}^{t_n}  ,
\end{equation}
where
  \begin{align*}
    & R=
    \begin{pmatrix}
      1 & 1 \\
      0 & 1
    \end{pmatrix},
    &
    & L =
    \begin{pmatrix}
      1 & 0 \\
      1 & 1
    \end{pmatrix} .
  \end{align*}
To denote a mapping class~\eqref{phi_RL}
we use a sequence of symbols
$F_1 \, F_2 \cdots F_{c}
=\underbrace{R \cdots R}_{s_1} \underbrace{L \cdots L}_{t_1} \cdots
\underbrace{R \cdots R}_{s_n} \underbrace{L \cdots L}_{t_n}$
where $F_k=R$ or $L$,
and 
\begin{equation}
  c=\sum_{j=1}^n \left( s_j + t_j \right).
\end{equation}

We use a triangulation of $\Sigma_{1,1}$ as depicted  in
Fig.~\ref{fig:triangulation_1puncture}.
It  is  related to the Farey triangles~\cite{FloyHatc82a}, 
which 
we will explain in Section~\ref{sec:knot_decompose}.
The actions of $R$ and $L$ are interpreted as ``flips'' of
triangulation as shown in the figure.
The triangulation is translated into
the cluster algebra of $N=3$ with the exchange matrix as
\begin{equation}
  \label{B_torus}
  \mathbf{B}=
  \begin{pmatrix}
    0 & -2 & 2 \\
    2 & 0 & -2 \\
    -2 & 2 & 0
  \end{pmatrix} .
\end{equation}
This denotes the quiver in Fig.~\ref{fig:quiver_1puncture}, 
where each vertex has a labeling corresponding to that of 
an edge in the triangulation.
%This  denotes
%the quiver in Fig.~\ref{fig:quiver_1puncture}, 
%which follows from a triangulation of
%the once-puncture torus $\Sigma_{1,1}$.
Then the flips $R$ and $L$ can be identified with the mutations
in the cluster algebra (cf. \cite{TerasYamaz11b}),
and we have
\begin{align}
  \label{define_RL}
  {R} & = s_{1,3} \, \mu_1 ,
  &
  {L} & = s_{2,3} \, \mu_2,
\end{align}
where $s_{i,j}$ is the  permutation defined in Def.~\ref{def:s}.
See Fig.~\ref{fig:triangulation_1puncture}. 
%Therein each edge has a labelling corresponding to a subscript of 
%$y$-variables.
We have used  the permutations so that
the exchange matrix~\eqref{B_torus} is invariant under these actions.
In this way the  flips $R$ and $L$ act on
the $y$-variable respectively as
\begin{align}
  &
  (\boldsymbol{y}, \mathbf{B})
  \xrightarrow{R}
  (
  R(\boldsymbol{y}) ,
  \mathbf{B}
  ),
  &
  &
  (\boldsymbol{y}, \mathbf{B})
  \xrightarrow{L}
  (
  L(\boldsymbol{y}) ,
  \mathbf{B}
  ),
\end{align}
where
\begin{align}
  \label{torus_RL_on_y}
  &
  {R} (\boldsymbol{y})
  =
  \begin{pmatrix}
    y_3 ( 1 + y_1^{~-1} )^{-2}
    \\
    y_2 ( 1+ y_1)^2
    \\
    y_1^{~-1}
  \end{pmatrix}^\top ,
  &
  &
  {L}(\boldsymbol{y})
  =
  \begin{pmatrix}
    y_1 ( 1+ y_2^{~-1})^{-2}
    \\
    y_3 (1+y_2)^2
    \\
    y_2^{~-1}
  \end{pmatrix}^\top .
\end{align}

\begin{figure}[tbhp]
  \begin{center}
    \begin{minipage}{0.41\linewidth}
      \centering
      \includegraphics[]{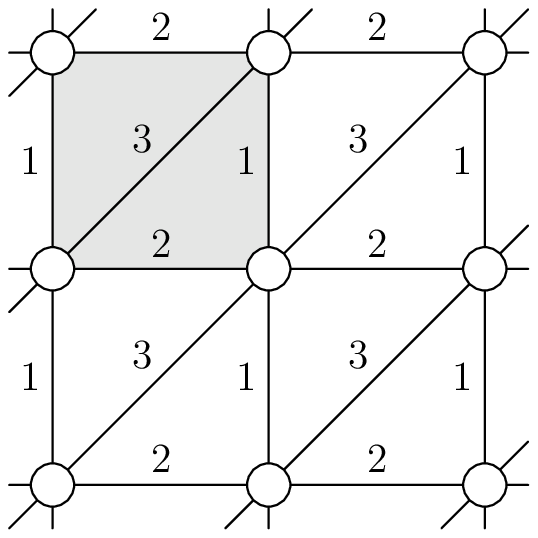}    
    \end{minipage}
%    \hfill
    $
    \begin{array}[]{c}
      {R}\\
      \nearrow
      \\[17mm]
      \searrow\\
      {L}
    \end{array}
    $
%    \hfill
    \begin{minipage}{0.41\linewidth}
      \centering
      \includegraphics[]{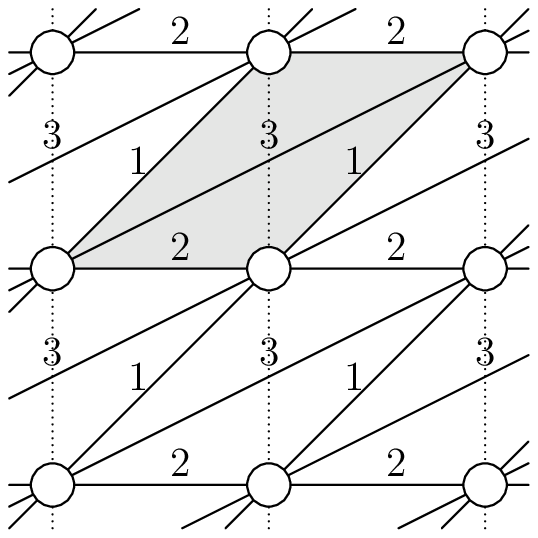}    

      \vspace{11mm}

      \includegraphics[]{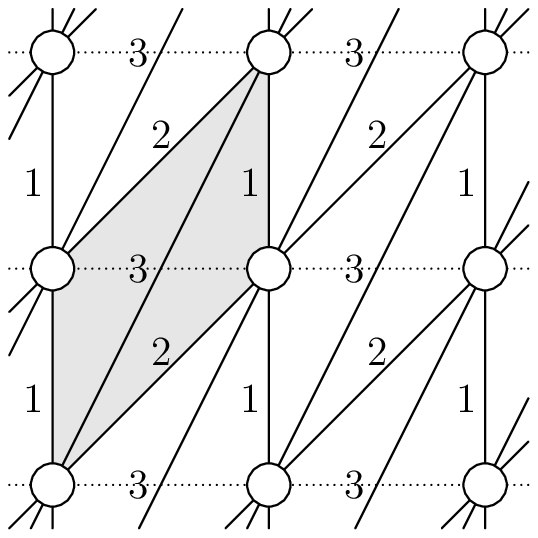}    
    \end{minipage}
  \end{center}
  \caption{A triangulation of the once-punctured torus $\Sigma_{1,1}$
    (left).
    The vertex denotes a puncture.
    A fundamental region is colored gray.
    A labeling of each edge corresponds to that of each vertex 
    in the quiver in Fig.~\ref{fig:quiver_1puncture}.
    %denotes a subscript in $y$-variables.
    The actions of flips, $R$ and $L$, are given in the right hand side.
  }
  \label{fig:triangulation_1puncture}
\end{figure}

\begin{figure}[tbhp]
  \centering
  \includegraphics[]{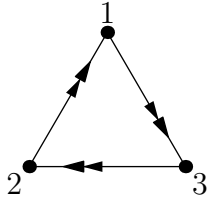}
  \caption{A quiver associated to a triangulation of the once-puncture
    torus $\Sigma_{1,1}$ in the left figure in Fig.~\ref{fig:triangulation_1puncture}.}
  \label{fig:quiver_1puncture}
\end{figure}

\begin{definition}
  A $y$-pattern  of a mapping class 
  $\varphi=F_1 \cdots F_c$~\eqref{phi_RL} is
  $\boldsymbol{y}[k]$ for $k=1,2,\dots, c+1$ defined recursively by
  \begin{equation}
    \label{F_on_y}
    \boldsymbol{y}[k+1]
    =
    F_k\left(
      \boldsymbol{y}[k]
    \right) .
  \end{equation}
\end{definition}

\begin{figure}[tbhp]
  \centering
  \includegraphics[]{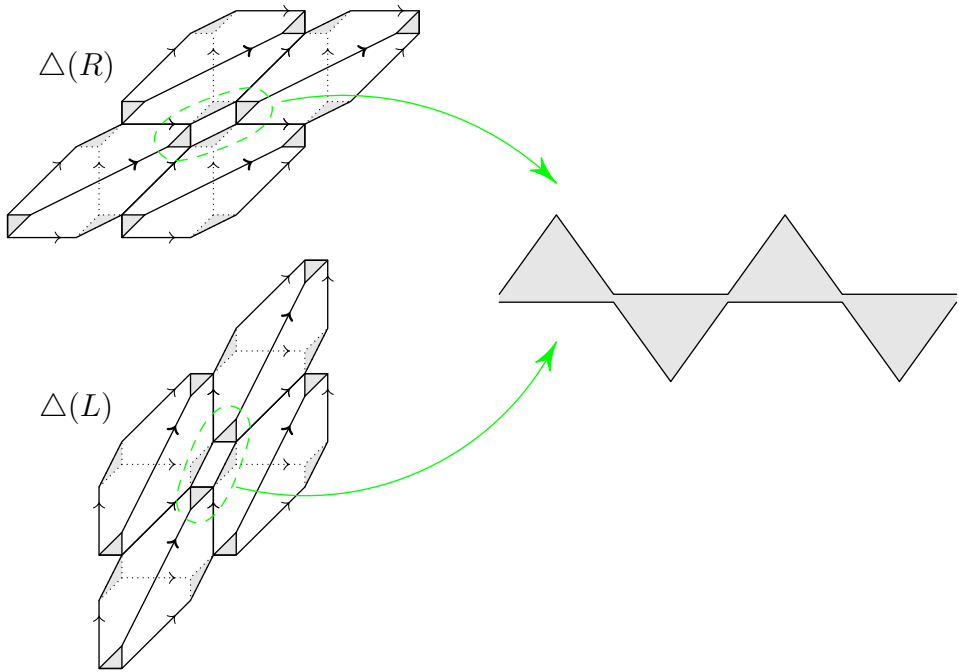}
  \caption{Left: The tetrahedra $\triangle(R)$ and $\triangle(L)$
    assigned to the  flips $R$ and $L$ on the  once-punctured
    torus~$\Sigma_{1,1}$.
    Once we fix an
    orientation of triangulation of $\Sigma_{1,1}$,
    an orientation of tetrahedra is induced as illustrated in the
    figure.
    Right:
    Drawn is a part of Euclidean triangulations of the cusp.}
  \label{fig:torus_4triangle}
\end{figure}

We have seen in Section~\ref{sec:interrelation}
that  the  cluster mutation is interpreted
as an attachment of an ideal tetrahedron.
Thus to each flip $F_k$,
$R$ or $L$ defined in~\eqref{define_RL},
we can assign  a single ideal hyperbolic
tetrahedron
$\triangle(F_k)$
as illustrated  in Fig.~\ref{fig:torus_4triangle}.
The triangulations of $\Sigma_{1,1}$ in
Fig.~\ref{fig:triangulation_1puncture} can be regarded as the top and
bottom pleated faces of ideal tetrahedron in 
Fig.~\ref{fig:torus_4triangle} \cite{FloyHatc82a}.
In a cross section by horosphere at a vertex
we have four triangles, and each of them
has one vertex not shared with any of the other three as in
Fig.~\ref{fig:torus_4triangle}.
%We see that every four triangles, 
%a cross section by horosphere at vertices,
%has one vertex not shared with any of the other three as in
%Fig.~\ref{fig:torus_4triangle}.

Our first claim is that the  modulus of each ideal tetrahedron is given
from a $y$-pattern.
See also~\cite{NagaTeraYama11a}.

\begin{prop}
  \label{thm:volume_torus}
  Let
  $\boldsymbol{y}[k]$ be a
  $y$-pattern of $\varphi$
  with
  an initial condition,
  \begin{equation}
    \label{torus_initial}
    \boldsymbol{y}[1]=
    \left(
      y_1, y_2, \frac{1}{y_1 \, y_2}
    \right) .
  \end{equation}
  Here
  $y_1$ and $y_2$ are  solutions
  of
  \begin{equation}
    \label{torus_periodic}
    \boldsymbol{y}[1]=
    \boldsymbol{y}[c+1] ,
  \end{equation}
  such that
  each modulus $z[k]$ defined by
  \begin{equation}
    \label{torus_z_y}
    z[k] =
    \begin{cases}
      \displaystyle
      -\frac{1}{y[k]_1},
      & \text{when ${F}_k={R}$,}
      \\[3ex]
      \displaystyle
      -\frac{1}{y[k]_2},
      & \text{when ${F}_k = {L}$,}
    \end{cases}
  \end{equation}
  is in the upper half plane $\mathbb{H}$
  for $k=1,\cdots,c$.
  Then $z[k]$ is the modulus of the tetrahedron $\triangle(F_k)$.
  % corresponding to the $k$-th
  % flip $F_k$.
\end{prop}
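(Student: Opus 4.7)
The plan is to lift the local dictionary between cluster mutations and ideal tetrahedron attachments established in Section~\ref{sec:interrelation} to the whole sequence of flips $F_1\cdots F_c$ realizing the monodromy $\varphi$, and then to check that the resulting assignment of moduli $\{z[k]\}$ satisfies Thurston's gluing and cusp equations for the associated layered triangulation of $M_\varphi$. By~\eqref{define_RL} each flip is a mutation ($\mu_1$ for $R$, $\mu_2$ for $L$) post-composed with a permutation that preserves $\mathbf{B}$ in~\eqref{B_torus}. The observation~\eqref{z_and_y_mutation} identifies the modulus of the tetrahedron attached by a mutation at vertex $i$ with $-1/y[k]_i$, which, applied to $i=1$ or $i=2$ according to whether $F_k=R$ or $L$, is exactly the formula~\eqref{torus_z_y}. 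The upper-half-plane hypothesis then ensures that each $\triangle(F_k)$ is positively oriented and non-degenerate.

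Next, the cusp condition is handled by the initial relation $y[1]_1\,y[1]_2\,y[1]_3=1$ encoded in~\eqref{torus_initial}. The three $y$-variables correspond to the three corners of the cusp triangulation meeting at the puncture of $\Sigma_{1,1}$, and the vanishing of every column-sum of the exchange matrix~\eqref{B_torus} (a direct check using Proposition~\ref{thm:y_from_xe}) implies that the product $\prod_i y[k]_i$ is invariant under both $\mu_1$ and $\mu_2$, hence along the entire $y$-pattern. This persistence of the cusp relation is the completeness of the hyperbolic structure at the cusp of $M_\varphi$.

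The remaining content is the verification of the gluing equation at each closed edge of $M_\varphi$. Under the sign identification $y=-z$ of Section~\ref{sec:interrelation}, each component $y[k]_i$ records (up to sign) the product of all dihedral contributions accumulated on the edge labeled $i$ of the base triangulation after the first $k{-}1$ tetrahedra have been attached. In the layered triangulation of $M_\varphi$, each closed edge is obtained by identifying an edge of $\Sigma_{1,1}\times\{1\}$ with its $\varphi$-image in $\Sigma_{1,1}\times\{0\}$, and the total product of dihedral contributions meeting that edge is precisely the ratio $y[c+1]_i/y[1]_i$. The periodicity hypothesis~\eqref{torus_periodic} therefore forces this product to equal~$1$ for every $i$, i.e.\ the angles around every closed edge sum to $2\pi$. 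Combined with the previous paragraph, this identifies $\{z[k]\}$ with the geometric moduli of the tetrahedra in the complete hyperbolic triangulation of $M_\varphi$.

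The main obstacle is the bookkeeping in the last step: one must identify each closed edge of the layered triangulation with the precise subset of flips that touch it, and verify that the single-flip contribution $y[k+1]_i/y[k]_i$ coming from the recursion~\eqref{F_on_y} and~\eqref{torus_RL_on_y} is exactly the dihedral factor at that edge coming from $\triangle(F_k)$ in the Farey-triangulation description of~\cite{FloyHatc82a}. Once this combinatorial dictionary is in place, the proposition reduces to assembling the single-flip correspondence of Section~\ref{sec:interrelation} along the sequence $F_1\cdots F_c$.
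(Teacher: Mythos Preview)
Your outline is in the same spirit as the paper's argument, but there is a genuine gap in the treatment of the gluing equations. You assert that ``each closed edge is obtained by identifying an edge of $\Sigma_{1,1}\times\{1\}$ with its $\varphi$-image in $\Sigma_{1,1}\times\{0\}$'' and hence that the three relations $y[c+1]_i=y[1]_i$ suffice. This is not so: the layered triangulation of $M_\varphi$ has $c$ edges (one per tetrahedron, by an Euler-characteristic count), and only the three edges lying in the fibre over the cut point are governed by periodicity. The remaining $c-3$ edges are born by some flip $F_j$ and destroyed by a later flip $F_k$ without ever meeting the cut, and for each of these one must verify separately that the product of dihedral contributions equals~$1$. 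It is true --- and this is the missing observation --- that these ``internal'' edge equations hold automatically, since the edge being flipped at step $k$ has accumulated product $-y[k]_i$ and receives the final contribution $z[k]=-1/y[k]_i$; but you do not say this, and your third paragraph reads as if periodicity alone closes every edge.

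A second, smaller gap is in the completeness argument. You correctly observe that $\prod_i y[k]_i$ is a mutation invariant equal to~$1$, but you do not explain why this constant equals the longitudinal holonomy of the cusp torus; that identification requires exactly the kind of developing-map bookkeeping you defer in your final paragraph. The paper's proof in Section~\ref{sec:prop_torus} carries out both verifications concretely: it draws the cusp developing map (Fig.~\ref{fig:cusp_torus}), reads off each edge-cycle and the completeness curve, and checks each product directly from the recursion~\eqref{torus_RL_on_y}, using~\eqref{torus_periodic} for the edges crossing the cut and~\eqref{torus_initial} for the holonomy. Your plan becomes a proof once you (i) separate internal from crossing edges and dispatch the former as above, and (ii) actually identify the cusp curve in the developing map with the invariant $\prod_i y[k]_i$.
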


\begin{remark}
  It is known~\cite{Gueri06a}
  that there exists a geometric solution of~\eqref{torus_periodic},
  such that the imaginary part of $z[k]$ is positive for all $k$.
%  $\Im z[k] > 0$.
\end{remark}
% \begin{remark}
%   We have discarded vertex orderings of tetrahedra  in setting of
%   moduli~\eqref{torus_z_y} here.
% \end{remark}

\begin{coro}
  The hyperbolic volume of $M_\varphi$
  is given by
  \begin{equation}
    \Vol(M_\varphi)
    =
    \sum_{k=1}^c D\left(   z[k]  \right),
  \end{equation}
  where $z[k]$ is the modulus of $\triangle(F_k)$ 
  given in~\eqref{torus_z_y}.
\end{coro}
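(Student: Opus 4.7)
The statement is essentially a direct corollary of Proposition~\ref{thm:volume_torus} together with the general formula~\eqref{volM} for the hyperbolic volume of a cusped hyperbolic 3-manifold in terms of the moduli of an ideal triangulation. My plan has three short steps.

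First, I would invoke the Floyd--Hatcher description (already cited in the excerpt) of the monodromy ideal triangulation of $M_\varphi$: starting from the triangulation of $\Sigma_{1,1}$ in Fig.~\ref{fig:triangulation_1puncture}, each elementary flip $F_k \in \{R,L\}$ in $\varphi=F_1\cdots F_c$ contributes exactly one ideal tetrahedron $\triangle(F_k)$ sandwiched between consecutive pleated surfaces, as depicted in Fig.~\ref{fig:torus_4triangle}. The periodicity imposed by the bundle structure (identifying the top pleating after $F_c$ with the bottom pleating via $\varphi$) means that the $c$ tetrahedra $\triangle(F_1),\ldots,\triangle(F_c)$ glue together to form an ideal triangulation of $M_\varphi$ with a single cusp.

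Second, I would use Proposition~\ref{thm:volume_torus} to identify the modulus of each $\triangle(F_k)$ with $z[k]$ given by~\eqref{torus_z_y}. The hypothesis that $y_1,y_2$ are solutions of the periodicity equation~\eqref{torus_periodic} such that each $z[k]$ lies in $\mathbb{H}$ guarantees that all tetrahedra are positively oriented; moreover, the cluster-algebra periodicity $\boldsymbol{y}[1]=\boldsymbol{y}[c+1]$ is exactly equivalent to the product of gluing equations around each edge class and the cusp completeness condition, as explained in the discussion preceding Proposition~\ref{thm:volume_torus}. Thus $\{z[k]\}_{k=1}^c$ is a geometric solution of the gluing/completeness equations for the Floyd--Hatcher triangulation.

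Third, I would apply~\eqref{volM} directly: summing $D(z_\nu)$ over the triangulation yields
\begin{equation*}
  \Vol(M_\varphi) = \sum_{k=1}^c D\bigl(z[k]\bigr).
\end{equation*}

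The only nontrivial input is Proposition~\ref{thm:volume_torus} itself, which identifies the cluster $y$-variables with the moduli of the monodromy triangulation; once this is granted, the corollary follows immediately from the Bloch--Wigner formula. There is no real obstacle here beyond citing the correct structural results, though one should be careful to note that the periodicity solution of~\eqref{torus_periodic} selected in Proposition~\ref{thm:volume_torus} is the complete hyperbolic one (guaranteed to exist by the cited result of~\cite{Gueri06a}), so that the formula computes the actual hyperbolic volume rather than that of an incomplete structure.
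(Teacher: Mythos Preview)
Your proposal is correct and matches the paper's approach: the corollary is stated without proof immediately after Proposition~\ref{thm:volume_torus}, and is meant to follow at once from that proposition together with the general volume formula~\eqref{volM}. The only small inaccuracy is that the verification of the gluing and completeness conditions is carried out in the \emph{proof} of Proposition~\ref{thm:volume_torus} (Section~\ref{sec:prop_torus}) rather than in the discussion preceding it, but this does not affect your argument.
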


\subsection{Proof of Proposition~\ref{thm:volume_torus}}
\label{sec:prop_torus}

We have discussed in Section~\ref{sec:review} that the cluster
$y$-variable is related to the
modulus of ideal tetrahedron.
To prove that the $y$-pattern $\boldsymbol{y}[k]$ given
by~\eqref{F_on_y}
describes a complete hyperbolic structure of $M_\varphi$,
we need to check~\cite{WPThurs80Lecture,NeumZagi85a} that
both gluing conditions and a completeness condition are fulfilled.
The gluing conditions may be trivial once we know the fact in
Section~\ref{sec:interrelation}.
Though, we need to check   the gluing conditions at edges on
top/bottom  triangulated surfaces, and the completeness condition is
far from trivial.
In the following,
%for self-consistency
we shall check all of them by use of
a developing map of torus at infinity of $M_\varphi$~\eqref{phi_RL},
which is depicted  in
Fig.~\ref{fig:cusp_torus}
(see, \emph{e.g.}, \cite{Gueri06a}).
% We need to read 
% the completeness condition from the developing map.

\begin{figure}[tbph]
  \centering
  \includegraphics[scale=0.8]{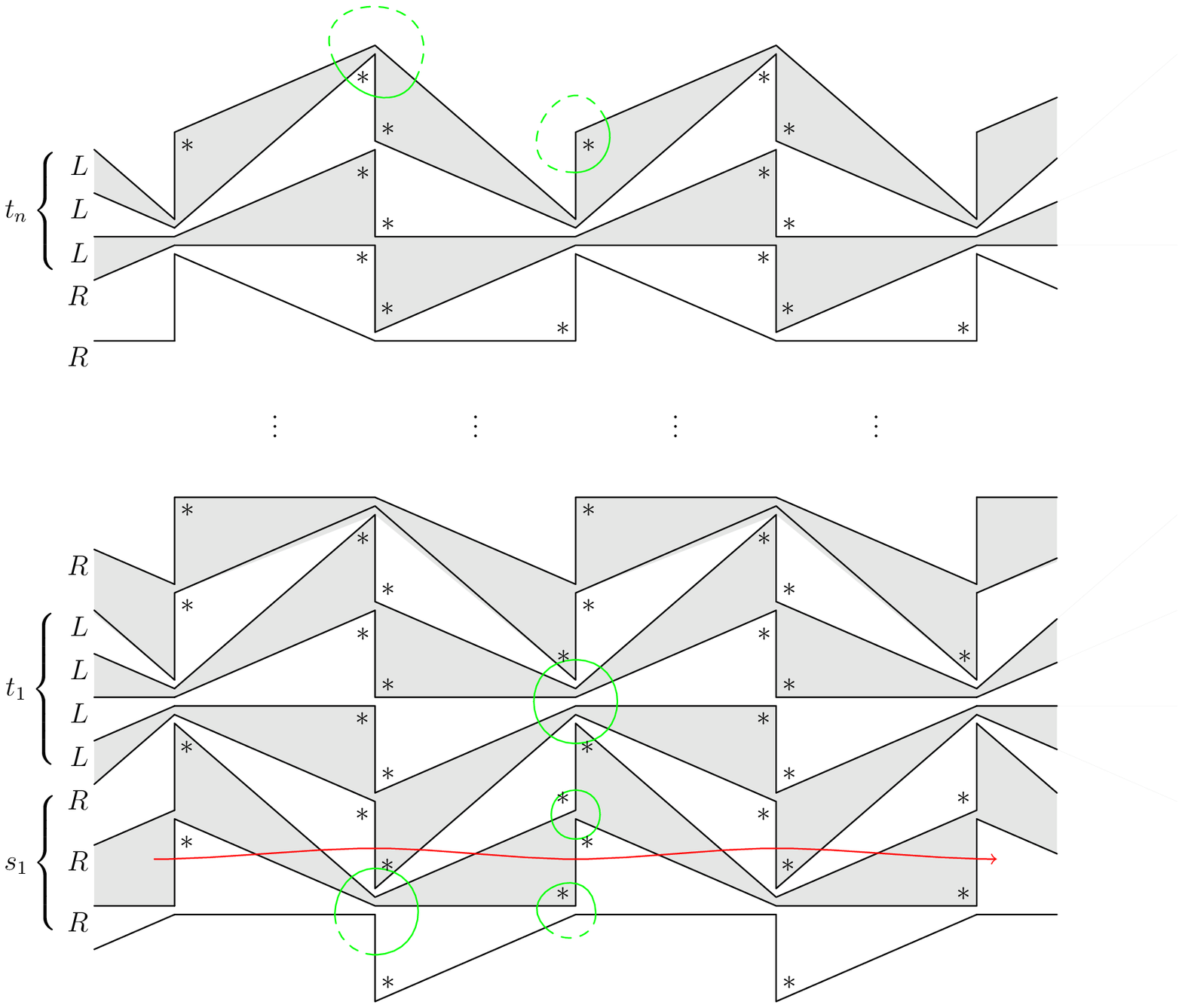}
  \caption{The developing map of the once-punctured torus bundle over the
    circle $M_\varphi$.
    Here $R$ and $L$ respectively denote
    the ideal tetrahedron $\triangle(R)$ and $\triangle(L)$ in
    Fig.~\ref{fig:torus_4triangle}.
    To emphasize the layered structure,
    every vertex is opened up and
    each layer  is colored alternately.
    See Fig.~\ref{fig:torus_4triangle}.
    We denote dihedral angles $z[k]$ by $\ast$. 
        % , and other dihedral
    % angles are
    % $z^\prime[k]$ or $z^{\prime\prime}[k]$ as in Fig.~\ref{fig:triangle}.
  }
  \label{fig:cusp_torus}
\end{figure}

First of all, when $s_1\geq 2$, we have
\begin{align*}
  z[3] \, 
  \left( z^{\prime \prime}[2] \right)^2
  \,
  z[1] 
  &=
  \frac{-1}{y[3]_1} \,
  \left( \frac{1}{1+y[2]_1^{~-1}} \right)^2 \,
  \frac{-1}{y[1]_1}
  \\
  & =
  \frac{-1}{y[2]_3 }
  \,
  \frac{-1}{y[1]_1} = 1.
\end{align*}
Here we have used
$\boldsymbol{y}[3]=R(\boldsymbol{y}[2])$
in the
second equality,
and the last equality follows from
$\boldsymbol{y}[2]=R(\boldsymbol{y}[1])$.
See~\eqref{torus_RL_on_y} for the action of flip~$R$.
We see that this equality is nothing but
a gluing condition for
the second 
%green 
circle from the bottom in Fig.~\ref{fig:cusp_torus}.
A case of $s_1=1$ can be checked in the similar manner.

In the same manner, we can check
\begin{equation*}
  z[k+1] \, 
  \left( z^{\prime \prime}[k] \right)^2
  \,
  z[k-1]  =1,
\end{equation*}
for $2\leq k \leq c-1$, which  also corresponds to a gluing
condition in the figure.
Using the periodic
condition~\eqref{torus_periodic}, we have
\begin{equation*}
  \begin{aligned}
    z[2] \,  \left( z^{\prime\prime}[1] \right)^2 \, z[c]
    & = \frac{-1}{y[2]_1} \,
    \left( \frac{1}{1+y[1]_1^{~-1}} \right)^2 \,
    \frac{-1}{y[c]_2}
    \\
    & =1 ,
  \end{aligned}
\end{equation*}
where we have used $y[1]_3=y[c+1]_3=y[c]_2^{~-1}$.
This coincides with a consistency condition for the  right
%green
semi-circle (top and bottom) in the figure.

We can further  check that
\begin{equation*}
  \begin{aligned}
    z[s_1+1]
    \cdot
    \prod_{k=1}^{s_1}  \left( z^\prime[k] \right)^2
    \cdot
    \left( z^\prime[c] \right)^2 \, z[c-1]
    &=
    \frac{-1}{y[s_1+1]_2} \cdot
    \prod_{k=1}^{s_1}
    \left( 1 + y[k]_1 \right)^2 \cdot
    \left( 1+y[c]_2 \right)^2 \,
    \frac{-1}{y[c-1]_2}
    \\
    & =
    \frac{1}{y[1]_2} \, y[c+1]_2
    =1 ,
  \end{aligned}
\end{equation*}
due to~\eqref{torus_periodic}.
This identity corresponds to a gluing condition for  the left 
%green
semi-circle (top and bottom) in the figure.
As another example of this type of equality, we have
\begin{equation*}
  z[s_1+t_1+1] \cdot
  \prod_{k=s_1}^{s_1+t_1} \left( z^{\prime \prime}[k] \right)^2
  \cdot
  z[s_1-1]=1 ,
\end{equation*}
which denotes a gluing condition for the right middle large
%green
circle.
In this way, 
it is straightforward to check  consistency conditions in the figure.

A completeness condition can be checked similarly.  
We have
\begin{equation*}
  \begin{aligned}
    \left(
      \frac{1}{z[1]} \, \frac{z^\prime[2]}{z^{\prime \prime}[2]}
      \cdot
      \prod_{k=3}^{s_1} \left( z^\prime[k] \right)^2
      \cdot
      z[s_1+1]
    \right)^2
    & =
    \left(
      \frac{y[1]_1}{y[2]_1} 
      \cdot
      \prod_{k=2}^{s_1} \left( 1 + y[k]_1 \right)^2
      \cdot
      \frac{1}{y[s_1+1]_2}
    \right)^2
    \\
    & =
    \left(
      \frac{1}{
        y[1]_1 \, y[1]_2 \, y[1]_3
      }
    \right)^2 = 1 ,
  \end{aligned}
\end{equation*}
where the last equality follows from the initial
condition~\eqref{torus_initial}.
We can see that
this equality denotes a completeness condition from the
curve in the figure.

This completes the proof.

\subsection{Cluster Pattern and Complex Volume}

In the previous Section~\ref{sec:prop_torus},
we have proved that the
$y$-variables give the hyperbolic volume of the manifold
$M_\varphi$.
We shall discuss that the cluster variables give the
\emph{complex  volume}
of $M_\varphi$.

We  reformulate the preceding result
in Section~\ref{sec:prop_torus}
by use of cluster variables 
with  coefficients.
By definition,
the permuted mutations $R$ and $L$~\eqref{define_RL}
act on a seed  respectively as
\begin{align}
  &
  (\boldsymbol{x}, \boldsymbol{\varepsilon}, \mathbf{B})
  \xrightarrow{R}
  (R(\boldsymbol{x}, \boldsymbol{\varepsilon}), \mathbf{B}) ,
  &
  (\boldsymbol{x}, \boldsymbol{\varepsilon}, \mathbf{B})
  \xrightarrow{L}
  (L(\boldsymbol{x}, \boldsymbol{\varepsilon}), \mathbf{B}) .
\end{align}
where
%{\color{blue}
\begin{align}
  \label{torus_explicit_xe}
  \begin{split}
  & R(\boldsymbol{x}, \boldsymbol{\varepsilon})
  =
  \left(
    \begin{pmatrix}
      x_3 \\ x_2 \\
      \displaystyle
      \frac{\varepsilon_1}{1 \oplus \ve_1}  \, \frac{x_2^{~2}}{x_1}
      +\frac{1}{1 \oplus \ve_1} \,
      \frac{ x_3^{~2} }{x_1}
    \end{pmatrix}^\top ,
    \begin{pmatrix}
      \varepsilon_3 
      \displaystyle
      \left(\frac{\ve_1}{1 \oplus \ve_1}\right)^2 \\[2ex] 
      \varepsilon_2 (1 \oplus \ve_1)^2\\
      \varepsilon_1^{-1}
    \end{pmatrix}^\top 
  \right) ,
  \\
  & L(\boldsymbol{x}, \boldsymbol{\varepsilon})
  =
  \left(
    \begin{pmatrix}
      x_1 \\ x_3 \\
      \displaystyle
      \frac{1}{1 \oplus \ve_2} \,
      \frac{ x_1^{~2} }{x_2}
      +
      \frac{\varepsilon_2}{1 \oplus \ve_2}  \, \frac{x_3^{~2}}{x_2}
%      \frac{ x_1^{~2} + \varepsilon_2 \, x_3^{~2}}{(1 \oplus \ve_2)x_2} 
    \end{pmatrix}^\top ,
    \begin{pmatrix}
      \varepsilon_1 
      \displaystyle
      \left(\frac{\ve_2}{1 \oplus \ve_2}\right)^2 \\[2ex]  
      \varepsilon_3 (1 \oplus \ve_2)^2\\
      \varepsilon_2^{-1}
    \end{pmatrix}^\top 
  \right).
  \end{split}
\end{align}
%}

\begin{definition}
  A cluster pattern of  $\varphi=F_1 \cdots F_c$~\eqref{phi_RL} is
  $(\boldsymbol{x}[k], \boldsymbol{\varepsilon}[k])$
  for $k=1,2,\dots,c+1$ defined recursively by
  \begin{gather}
    \label{F_on_x_e}
    (  \boldsymbol{x}[k+1],
    \boldsymbol{\varepsilon}[k+1] )
    =
    F_k\left(
      \boldsymbol{x}[k] ,
      \boldsymbol{\varepsilon}[k]
    \right) .
\end{gather}
\end{definition}

We set an initial seed $(\boldsymbol{x}, \boldsymbol{\varepsilon},
\mathbf{B})$ by
\begin{align}
  &\boldsymbol{x}[1]=(x_1,x_2,x_3),
  & 
  &\boldsymbol{\varepsilon}[1]=(1,1,1),  
\end{align}
and \eqref{B_torus}.
Note that for all $k$ we have
\begin{equation}
  \boldsymbol{\varepsilon}[k]=(1,1,1).
\end{equation}
When we  set a periodic condition
\begin{equation}
  \label{torus_x_condition}
  \boldsymbol{x}[c+1]=    \boldsymbol{x}[1] ,
\end{equation}
all cluster variables $x[k]_j$ are determined up to constant.
Thanks to Prop.~\ref{thm:y_from_xe} with~\eqref{B_torus},
the $y$-pattern in Prop.~\ref{thm:volume_torus}
can be identified with
\begin{equation}
  \label{torus_y_x}
  \boldsymbol{y}[k]
  =
  \left(
    \left( \frac{x[k]_2}{x[k]_3} \right)^2,
    \left( \frac{x[k]_3}{x[k]_1} \right)^2,
    \left( \frac{x[k]_1}{x[k]_2} \right)^2
  \right) ,
\end{equation}
and~\eqref{torus_x_condition} supports the periodicity of $y$-variable,
$\boldsymbol{y}[1]=\boldsymbol{y}[c+1]$.
We choose $\boldsymbol{x}[k]$ such that the
$y$-variable~\eqref{torus_y_x} is a geometric solution
of~\eqref{torus_periodic}.
As was shown in Section~\ref{sec:interrelation},
the cluster variable~$\boldsymbol{x}[k]$
can be regarded as  Zickert's edge
parameters~$c_{ab}$ of $\triangle(F_k)$.
See Remark~\ref{rem:cv}.
% Because 
% the $y$-pattern~$\boldsymbol{y}[k]$ is related to
% the moduli of $\triangle(F_k)$,
% %through~\eqref{torus_z_y},
% and 
% that
% the cluster variable~$\boldsymbol{x}[k]$
% is assigned to each edge in the  triangulations, 
% the cluster variables can be identified with
% Zickert's edge parameters $c_{ab}$ in~\eqref{Zickert_c}.
% (Recall Section \ref{sec:interrelation}.)

% \begin{remark}
% In the present case of $M_{\varphi}$, 
% it may not be obvious how the semifield $\mathbb{P}$ does work.
% The advantage of $\mathbb{P}$ 
% will be clarified in the case of 2-bridge link complements.
% \end{remark}

To get the complex volume modulo $\pi^2$, we need to take into account
of the  orientation of~$\triangle(F_k)$.
When we assign an    orientation to
the  triangulations of $\Sigma_{1,1}$,
it induces an orientations of $\triangle(F_k)$
as illustrated in
Fig.~\ref{fig:torus_4triangle}~\cite{WDNeum00a}.
The  tetrahedron~$\triangle(R)$ has the same orientation with~$\triangle$
in
Fig.~\ref{fig:tetrahedron}, while 
the  tetrahedron $\triangle(L)$ has 
the opposite orientation.
%an inside-out orientation of $\triangle(R)$. 
Using a relationship between the
vertex ordering
and dihedral angles in Fig.~\ref{fig:tetrahedron},
we obtain the following.
\begin{lemma}\label{lem:torus-z-x}
    Let $\boldsymbol{x}[k]$ 
    be a cluster pattern satisfying the condition
    \eqref{torus_x_condition}.
    Then the modulus $z[k]$ of $\triangle(F_k)$ is  
    given by 
  \begin{equation}
    \label{torus_z_x}
    z[k]
    =
    \begin{cases}
      \displaystyle
      \frac{x[k]_1 \, x[k+1]_3}{\left( x[k]_3 \right)^2},
      & \text{for $F_k=R$},
      \\[3ex]
      \displaystyle
      \frac{x[k]_2 \, x[k+1]_3}{\left( x[k]_3 \right)^2},
      & \text{for $F_k=L$},
    \end{cases}
  \end{equation}
  for $k=1,\ldots,c$.
\end{lemma}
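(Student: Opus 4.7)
The plan is a direct calculation that combines the explicit mutation formula \eqref{torus_explicit_xe} with the $y$-variable identity \eqref{torus_y_x}, and then reconciles the resulting expression with the modulus identified in Proposition~\ref{thm:volume_torus}. First I would verify that the cluster coefficients remain trivial throughout the pattern: in the tropical semifield of Definition~\ref{def:P} one has $1 \oplus 1 = 1$, so each factor $\varepsilon_i/(1 \oplus \varepsilon_i)$ appearing in the coefficient mutation \eqref{mutation_epsilon} equals $1$. Starting from $\boldsymbol{\varepsilon}[1] = (1,1,1)$, this propagates to $\boldsymbol{\varepsilon}[k] = (1,1,1)$ for all $k$, as already noted in the text.

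Next, using \eqref{torus_explicit_xe} with these trivial coefficients, I would compute the third component of $F_k(\boldsymbol{x}[k], \boldsymbol{\varepsilon}[k])$ explicitly. For $F_k = R$ this gives $x[k]_1 \, x[k+1]_3 = x[k]_2^{\,2} + x[k]_3^{\,2}$, and for $F_k = L$ the analogous identity $x[k]_2 \, x[k+1]_3 = x[k]_1^{\,2} + x[k]_3^{\,2}$. Substituting into the proposed formula \eqref{torus_z_x} and invoking \eqref{torus_y_x}, I obtain, for $F_k = R$,
\begin{equation*}
\frac{x[k]_1 \, x[k+1]_3}{(x[k]_3)^2} = 1 + \left(\frac{x[k]_2}{x[k]_3}\right)^{\!2} = 1 + y[k]_1,
\end{equation*}
and the analogous $1 + y[k]_2$ for $F_k = L$.

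It then remains to recognize these expressions as a modulus of the tetrahedron $\triangle(F_k)$. By Proposition~\ref{thm:volume_torus}, $\zeta := -1/y[k]_1$ is a modulus of $\triangle(R)$ in one labeling of its edges; the shape-parameter transformation $\zeta \mapsto \zeta' = 1 - 1/\zeta$ permutes opposite edge pairs and gives another valid modulus of the same ideal tetrahedron. A short calculation shows $\zeta' = 1 + y[k]_1$, which coincides with the right-hand side found above. The case $F_k = L$ is entirely analogous. Hence \eqref{torus_z_x} does give a modulus of $\triangle(F_k)$, with the particular choice dictated by Zickert's edge parameterization of Section~\ref{sec:interrelation}: identifying $c_{03} = x[k]_1$, $c_{12} = x[k+1]_3$, and $c_{02} = c_{13} = x[k]_3$ (for $F_k = R$) reproduces exactly $\frac{c_{03} c_{12}}{c_{02} c_{13}} = z[k]$ in \eqref{Zickert_c}.

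I expect the main obstacle to be the last edge identification, since it requires a careful reading of the pleated tetrahedron and the developing map in Fig.~\ref{fig:torus_4triangle} and Fig.~\ref{fig:cusp_torus}: one has to verify which cluster positions label which edges of $\triangle(F_k)$ in the vertex ordering that matches Zickert's convention, and in particular that $c_{02}$ and $c_{13}$ are assigned to the same cluster variable $x[k]_3$ precisely because they correspond to edges identified on the once-punctured torus. Once this bookkeeping is carried out for the reference orientation of Fig.~\ref{fig:tetrahedron}, the lemma follows from the two preceding computational steps.
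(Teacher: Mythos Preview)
Your approach mirrors the paper's, but the $L$ case is not ``entirely analogous'' and the slip there is a genuine gap. From \eqref{torus_explicit_xe} with trivial coefficients you correctly get $x[k]_2\,x[k+1]_3 = x[k]_1^{\,2}+x[k]_3^{\,2}$, hence
\[
\frac{x[k]_2\,x[k+1]_3}{(x[k]_3)^2}
= 1+\left(\frac{x[k]_1}{x[k]_3}\right)^{2}
= 1+\frac{1}{y[k]_2},
\]
using \eqref{torus_y_x}; it is \emph{not} $1+y[k]_2$. This matters: with $\zeta=-1/y[k]_2$ from Proposition~\ref{thm:volume_torus}, one has $\zeta'=1+y[k]_2$, which does not agree with the formula above. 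What does agree is $1-\zeta=1+1/y[k]_2$, an orientation-\emph{reversing} shape transformation. The reason is exactly what the paper's proof isolates: $\triangle(L)$ carries the opposite vertex ordering to $\triangle$ in Fig.~\ref{fig:tetrahedron}, so it is $(-1/y[k]_2)^{-1}$, not $-1/y[k]_2$, that plays the role of $z''[k]$, whence $z[k]=1+1/y[k]_2$.

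Your Zickert-style edge bookkeeping would eventually catch this, but only if you track the vertex ordering for $\triangle(L)$ separately; the naive ``analogous'' assignment $c_{03}=x[k]_2$, $c_{12}=x[k+1]_3$, $c_{02}=c_{13}=x[k]_3$ presupposes the same orientation as $\triangle(R)$ and leads to the wrong sign structure. So the missing ingredient is precisely the orientation reversal of $\triangle(L)$, which must be invoked before the $L$ case goes through.
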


\begin{proof}
When $F_k=R$, we see
from  the  vertex ordering of $\triangle(R)$ in
Fig.~\ref{fig:torus_4triangle}
and $\triangle$ in
Fig.~\ref{fig:tetrahedron}
that $-1/y[k]_1$ is identified with $z^{\prime\prime}[k]$.
%and that $z[k] = 1 + y[k]_1$. 
By using~\eqref{torus_explicit_xe} and~\eqref{torus_y_x}, we obtain
\begin{equation*}
z[k] = 1 + y[k]_1 = \frac{(x[k]_2)^2 + (x[k]_3)^2}{(x[k]_3)^2} 
= \frac{x[k]_1 x[k+1]_3}{(x[k]_3)^2}.
\end{equation*}
When $F_k=L$, we find that
$\triangle(L)$ in Fig.~\ref{fig:torus_4triangle}
has an opposite vertex ordering to $\triangle$ in
Fig.~\ref{fig:tetrahedron}.
Thus $(-1/y[k]_2)^{-1}$ corresponds to $z^{\prime\prime}[k]$,
and we get
% Then we see that the dihedral angle $z$ of $\triangle_{\text{op}}$ 
% corresponds to $1+1/y[k]_2$. Using \eqref{torus_y_x} and 
% \eqref{torus_explicit_xe} again, we get
\begin{equation*}
z[k] = 1 + \frac{1}{y[k]_2} = \frac{(x[k]_1)^2 + (x[k]_3)^2}{(x[k]_3)^2} 
= \frac{x[k]_2 x[k+1]_3}{(x[k]_3)^2}.
\end{equation*}
\end{proof}

% We should stress that
% we have discarded a vertex ordering in
% Prop.~\ref{thm:volume_torus},
% and that~\eqref{torus_z_y} with~\eqref{torus_y_x} does not coincides
% with~\eqref{torus_z_x}.
% See Remark~\ref{rem:torus_orient} below.

\begin{remark}
  Due to orientation of the tetrahedron,
  a  solution is
  geometric if and only if $\Imaginary z[k]>0$
  (resp. $\Imaginary z[k] <0$)
  for $F_k=R$  
  (resp. $F_k=L$).
\end{remark}

% \begin{itemize}
% \item $F_k=R$, 
%   \begin{align}
%     & z[k]  =
%     - \left( \frac{x[k]_3}{x[k]_2} \right)^2,
%     &
%     &
%     1- z[k] =
%     \frac{x[k]_1 \, x[k+1]_3}{
%       \left( x[k]_2 \right)^2
%     } ,
%   \end{align}

% \item $F_k=L$,
%   \begin{align}
%     & z[k]  =
%     - \left( \frac{x[k]_1}{x[k]_3} \right)^2,
%     &
%     &
%     1- z[k] =
%     \frac{x[k]_2 \, x[k+1]_3}{
%       \left( x[k]_3 \right)^2
%     } .
%   \end{align}
% \end{itemize}

We obtain the flattening of $\triangle(F_k)$ as follows.

\begin{lemma}
  \label{lemma:torus_pq}
  We follow the setting at Lemma~\ref{lem:torus-z-x}.
  The flattening $(z[k]; p[k], q[k])$ for the tetrahedron
  $\triangle(F_k)$
  is
  given by
  \begin{equation}
    \label{torus_flattening_pq}
    \begin{aligned}
      \log z[k] + p[k] \, \pi \, \I
      & =
      \begin{cases}
        \log x[k]_1 + \log x[k+1]_3         -2 \log x[k]_3 ,
        & \text{for $F_k=R$,}
        \\[1ex]
        \log x[k]_2 + \log x[k+1]_3 -2 \log x[k]_3  ,
        & \text{for $F_k=L$,}
      \end{cases}
      \\[2ex]
      -\log \left( 1 - z[k] \right)
      + q[k] \, \pi \, \I
      & =
      \begin{cases}
        2 \log x[k]_3 - 2 \log x[k]_2 ,
        & \text{for $F_k=R$,}
        \\[1ex]
        2 \log x[k]_3 - 2 \log x[k]_1,
        & \text{for $F_k=L$,}
      \end{cases}
    \end{aligned}
  \end{equation}
  for $k=1,\ldots,c$.
\end{lemma}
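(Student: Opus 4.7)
The plan is to apply Zickert's Proposition~\ref{prop:Zickert} to the tetrahedra $\triangle(R)$ and $\triangle(L)$, using Lemma~\ref{lem:torus-z-x} together with the correspondence between cluster variables and Zickert's edge parameters established in Section~\ref{sec:interrelation} and summarized in Remark~\ref{rem:cv}. The idea is that once we correctly identify the six Zickert parameters $c_{ab}$ of each tetrahedron with cluster variables, the log formulas~\eqref{Zickert_log_c} read off $p[k]$ and $q[k]$ directly.

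First I would identify the six edges of $\triangle(F_k)$ with cluster variables. For $F_k=R$, the flip $R=s_{1,3}\,\mu_1$ replaces the diagonal labeled $1$ with a new diagonal which the permutation relabels as $3$; hence the two ``diagonal'' edges of $\triangle(R)$ carry the parameters $x[k]_1$ (old diagonal) and $x[k+1]_3$ (new diagonal). The four remaining outer edges come in two identified pairs on $\Sigma_{1,1}$: two copies of edge~$2$ (parameter $x[k]_2$ each) and two copies of edge~$3$ (parameter $x[k]_3$ each). Matching the vertex ordering of $\triangle(R)$ in Fig.~\ref{fig:torus_4triangle} to that of $\triangle$ in Fig.~\ref{fig:tetrahedron}, one assigns $c_{ab}$ to each edge so that the dihedral parameter $z$ sits on the edges $\{03,12\}$, $z''$ on $\{02,13\}$, and $z'$ on $\{01,23\}$. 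Consulting the figure, this gives $c_{03}c_{12} = x[k]_1\,x[k+1]_3$, $c_{02}c_{13} = (x[k]_3)^2$, and $c_{01}c_{23}=(x[k]_2)^2$.

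Next I would verify Zickert's ratio conditions~\eqref{Zickert_c}. By Lemma~\ref{lem:torus-z-x},
\begin{equation*}
  \frac{c_{03}\,c_{12}}{c_{02}\,c_{13}}
  = \frac{x[k]_1\,x[k+1]_3}{(x[k]_3)^2} = z[k],
\end{equation*}
which confirms the first condition with sign $+$. Using $1-z[k] = -y[k]_1 = -(x[k]_2/x[k]_3)^2$, the third ratio becomes $c_{02}c_{13}/(c_{01}c_{23}) = (x[k]_3/x[k]_2)^2 = -1/(1-z[k])$, matching~\eqref{Zickert_c} up to the $\pm$ sign allowed there; the remaining ratio is then forced. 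Substituting into~\eqref{Zickert_log_c} yields the first line of~\eqref{torus_flattening_pq} immediately (with $p[k]$ the integer absorbing the branch discrepancy between $\log c_{03}+\log c_{12}-\log c_{02}-\log c_{13}$ and $\log z[k]$), and likewise for the second line (where $q[k]$ is forced to be odd because of the sign in $1-z[k]=-(x[k]_2/x[k]_3)^2$). The case $F_k=L$ is parallel with indices $1$ and $2$ swapped, reflecting that the induced vertex ordering on $\triangle(L)$ in Fig.~\ref{fig:torus_4triangle} is opposite to that of $\triangle$ in Fig.~\ref{fig:tetrahedron}.

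The main obstacle will be the careful bookkeeping of vertex orderings and edge identifications in Fig.~\ref{fig:torus_4triangle}: one must ensure that the assignment of $c_{ab}$ to each edge of $\triangle(F_k)$ produces the same complex number on edges shared between consecutive tetrahedra in the layered triangulation (Zickert's gluing requirement in Prop.~\ref{prop:Zickert}), and that the orientation of $\triangle(L)$ is handled consistently. Once the combinatorial identification is settled, the rest is a direct application of Zickert's log formulas to the products of $c_{ab}$ already computed.
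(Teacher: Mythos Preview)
Your proposal is correct and follows essentially the same approach as the paper: both identify the products $c_{03}c_{12}$, $c_{02}c_{13}$, $c_{01}c_{23}$ with expressions in the cluster variables, check Zickert's ratio conditions~\eqref{Zickert_c}, and then read off the flattening from~\eqref{Zickert_log_c}. The paper's version is slightly more streamlined in that it skips the edge-by-edge identification of the $c_{ab}$ and simply computes $\frac{1}{1-z[k]}$ directly from~\eqref{torus_z_x} and the mutation formula~\eqref{torus_explicit_xe}, obtaining $-(x[k]_3/x[k]_2)^2$ for $F_k=R$ and $-(x[k]_3/x[k]_1)^2$ for $F_k=L$, then invokes~\eqref{Zickert_c} and~\eqref{Zickert_log_c} in one line; your combinatorial bookkeeping of which edge carries which $c_{ab}$ is correct but not strictly needed once these products are known.
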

\begin{proof}
  We recall that the moduli is given in~\eqref{torus_z_x}, and that
  the flips have
  the actions in~\eqref{torus_explicit_xe}.
  Then we obtain
  \begin{equation}\label{eq:torus1-z-x}
    \frac{1}{1-z[k]}
    =
    \begin{cases}
      \displaystyle
      - \left( \frac{x[k]_3}{x[k]_2} \right)^2 ,
      & \text{for $F_k=R$,}
      \\[3ex]
      \displaystyle
      - \left( \frac{x[k]_3}{x[k]_1} \right)^2 ,
      & \text{for $F_k=L$.}
    \end{cases}
  \end{equation}
From \eqref{torus_z_x},~\eqref{eq:torus1-z-x},
and Zickert's identity~\eqref{Zickert_c},
the claim follows.
\end{proof}

As a consequence of Lemma~\ref{lem:torus-z-x} and
Lemma~\ref{lemma:torus_pq}, 
it is straightforward to  obtain the following theorem,
which is the main result in this section.
\begin{theorem}
  \label{thm:complex_torus}
  The complex volume of $M_\varphi$ is 
  \begin{equation}
    \I \, \left(
      \Vol(M_\varphi) + \I \, \CS(M_\varphi)
    \right)
    =
    \sum_{k=1}^c
    \sign(F_k) \,
    \widehat{L}\left(z[k]
    ;
    p[k], q[k] \right)  \mod \pi^2,
  \end{equation}
  where $\sign(R)=1$ and $\sign(L)=-1$, and the flattening 
$(z[k];p[k],q[k])$ is given in~\eqref{torus_z_x} and \eqref{torus_flattening_pq}.
\end{theorem}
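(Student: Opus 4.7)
The plan is to package the theorem as a direct consequence of Neumann's complex volume formula~\eqref{complex_volume_M} combined with Lemma~\ref{lem:torus-z-x}, Lemma~\ref{lemma:torus_pq}, and Zickert's Proposition~\ref{prop:Zickert}. First I would invoke Proposition~\ref{thm:volume_torus} (and the remark on the existence of a geometric solution to~\eqref{torus_periodic}) to conclude that when the cluster pattern is chosen so that the induced $y$-pattern via~\eqref{torus_y_x} is geometric, the layered decomposition $M_\varphi = \bigcup_{k=1}^c \triangle(F_k)$ is an ideal triangulation realizing the complete hyperbolic structure of $M_\varphi$, with the moduli $z[k]$ given by~\eqref{torus_z_x}. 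Neumann's formula then reduces the problem to identifying an admissible flattening $(z[k];p[k],q[k])$ for each $\triangle(F_k)$ and the correct sign $\sign(F_k)$.

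Next I would read the signs off Fig.~\ref{fig:torus_4triangle}: the vertex ordering induced on $\triangle(R)$ matches the one on $\triangle$ in Fig.~\ref{fig:tetrahedron}, whereas the ordering on $\triangle(L)$ is reversed, which forces $\sign(R)=1$ and $\sign(L)=-1$ in Neumann's sum. For the flattening, Lemma~\ref{lemma:torus_pq} already produces the candidate data~\eqref{torus_flattening_pq} directly from the cluster variables; what must be added is the verification that these formulas arise from the Zickert construction of Proposition~\ref{prop:Zickert} applied to edge parameters $c_{ab}$ taken from the cluster variables of the two consecutive layers $\boldsymbol{x}[k],\boldsymbol{x}[k+1]$. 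As in Section~\ref{sec:interrelation}, the assignment sends each edge of $\triangle(F_k)$ to one of the variables $x[k]_1,x[k]_2,x[k]_3,x[k+1]_3$ so that the three ratios in~\eqref{Zickert_c} reproduce $z[k]$, $1-1/z[k]$, $1/(1-z[k])$ up to sign; taking principal logarithms then matches~\eqref{torus_flattening_pq}, and in particular $p[k],q[k]\in\mathbb{Z}$.

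The heart of the argument, and the main obstacle, is the global consistency demanded by the last sentence of Proposition~\ref{prop:Zickert}: edges of distinct tetrahedra that are identified in $M_\varphi$ must carry the same complex number. Here I would appeal to two structural facts about the cluster pattern. Locally, since $F_k \in \{R,L\}$ acts as a permuted single mutation $\mu_1$ or $\mu_2$, two of the three cluster variables are preserved in passing from layer $k$ to layer $k+1$, so the edges shared by $\triangle(F_k)$ and $\triangle(F_{k+1})$ on their pleated interface automatically receive matching edge parameters. Globally, the periodicity $\boldsymbol{x}[c+1]=\boldsymbol{x}[1]$ closes up the bundle at the monodromy, and the gluing and completeness identities already verified in Section~\ref{sec:prop_torus} (for the $y$-variables) lift to multiplicative identities among the $x[k]_j$ along each cusp circle of Fig.~\ref{fig:cusp_torus}, guaranteeing that there is no logarithmic obstruction beyond the $p[k],q[k]$ already recorded.

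Once these consistency checks are in place, substituting the flattenings of Lemma~\ref{lemma:torus_pq} and the signs $\sign(F_k)$ into~\eqref{complex_volume_M} yields the claimed identity modulo $\pi^2$, completing the proof. The routine part is the sign and flattening bookkeeping; the conceptually delicate part, which is the real content beyond the lemmas, is checking that the Zickert edge parameters furnished by the cluster variables glue consistently across $M_\varphi$ — an issue that the cluster algebra framework resolves essentially for free through the mutation structure and the periodicity~\eqref{torus_x_condition}.
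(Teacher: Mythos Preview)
Your proposal is correct and follows essentially the same route as the paper: the paper states the theorem as an immediate consequence of Lemma~\ref{lem:torus-z-x} and Lemma~\ref{lemma:torus_pq}, relying on the preceding discussion that the cluster variables $x[k]_j$ serve as Zickert's edge parameters and that the orientation of $\triangle(R)$ (resp.\ $\triangle(L)$) agrees with (resp.\ is opposite to) that of $\triangle$ in Fig.~\ref{fig:tetrahedron}. Your added paragraph on global edge-parameter consistency (via the mutation leaving two of three variables fixed and the periodicity~\eqref{torus_x_condition}) makes explicit what the paper leaves implicit in the sentence ``the cluster variable~$\boldsymbol{x}[k]$ can be regarded as Zickert's edge parameters~$c_{ab}$ of $\triangle(F_k)$,'' but the logical structure is the same.
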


\begin{remark}
  \label{rem:torus_orient}
  When we discard
  the vertex ordering of
  the ideal hyperbolic tetrahedra $\triangle(F_k)$,
  the resulting complex volume is defined modulo
  $\frac{\pi^2}{6}$~\cite{WDNeum00a}.
  Ignoring
  orientations of $\triangle(F_k)$, we may simply set
  the moduli of tetrahedra
  from~\eqref{torus_z_y} and~\eqref{torus_y_x}
  as
  \begin{gather*}
    z[k] =
    \begin{cases}
      \displaystyle
      - \left( \frac{x[k]_3}{x[k]_2} \right)^2,
      &
      \text{for $F_k=R$,}
      \\[3ex]
      \displaystyle
      - \left( \frac{x[k]_1}{x[k]_3} \right)^2,
      &
      \text{for $F_k=L$,}
    \end{cases}
  \end{gather*}
  for $k=1,\ldots,c$.
  Then we obtain the flattening $(z[k]; p[k], q[k])$
  of $\triangle(F_k)$ 
  from
  \begin{align*}
    \log z[k] +  p[k] \, \pi \, \I
    & =
    \begin{cases}
      2 \log x[k]_3 - 2 \log x[k]_2,
      & \text{for $F_k=R$,}
      \\[1ex]
      2 \log x[k]_1 - 2 \log x[k]_3,
      & \text{for $F_k=L$,}
    \end{cases}
    \\[2ex]
    -\log(1-z[k])+ q[k] \, \pi \, \I
    &=
    \begin{cases}
      \displaystyle
      2 \log x[k]_2 - \log x[k]_1 -\log x[k+1]_3 ,
      & \text{for $F_k=R$,}
      \\[1ex]
      \displaystyle
      2 \log  x[k]_3 - \log x[k]_2 - \log x[k+1]_3 ,
      & \text{for $F_k=L$.}
    \end{cases}
  \end{align*}
  With these flattening,
  we have
  the complex volume   modulo $\frac{\pi^2}{6}$~\cite{WDNeum00a} by
  \begin{equation*}
    \I \, \left( \Vol(M_\varphi)+ \I \, \CS(M_\varphi) \right)
    =
    \sum_{k=1}^c 
    \widehat{L} \left( z[k]; p[k], q[k] \right) \mod \frac{\pi^2}{6}.
  \end{equation*}
\end{remark}

%%%
\subsection{\mathversion{bold}Example: $RL^2$}

We take an example $\varphi=RL^2$.
A cluster pattern is
\begin{equation*}
  \boldsymbol{x}[1]
  \xrightarrow{R}
  \boldsymbol{x}[2]
  \xrightarrow{L}
  \boldsymbol{x}[3]
  \xrightarrow{L}
  \boldsymbol{x}[4] .
\end{equation*}
An initial cluster variable
$\boldsymbol{x}[1]=(x_1,x_2,x_3)$
is solved up to constant from
the  periodic condition~\eqref{torus_x_condition},
$\boldsymbol{x}[1]=\boldsymbol{x}[4]$:
\begin{align*}
  & x_1 = x_3,
  &
  \left( \frac{x_2}{x_3} \right)^{4} +
  \left( \frac{x_2}{x_3} \right)^{2}+2 = 0 .
\end{align*}
By setting $x_1=x_3=1$,
geometric solutions are
$
x_2=
\pm(
0.6760\cdots+
\I \cdot 0.9783\cdots 
)
%0.67609672472697833977
%+
%0.97831834347851595642 \I),
$.
From~\eqref{torus_flattening_pq} 
the solution with plus sign gives
the flattening parameters~$(p[k],q[k])$ for $k=1,2,3$
as~$(0,-1)$, $(0,1)$, $(0,1)$,
while the minus sign solution  gives
$(0,1)$, $(-2,1)$, $(2,-1)$.
Both solutions give
\begin{equation*}
  \Vol(M_{RL^2})+ \I \, \CS(M_{RL^2})
  =
  2.6667\cdots
  -
  \I \cdot 0.4112\cdots .
%  2.66674478344905979-0.4112335167120566 I
\end{equation*}

%%%%%%%%%%%
\section{\mathversion{bold}$2$-Bridge Links}
\label{sec:knot}

\subsection{Canonical Decomposition
  of 2-Bridge Link Complements}
\label{sec:knot_decompose}

We briefly  describe the canonical decomposition of a hyperbolic
2-bridge link.
See~\cite{SakumWeek95a}
(also \cite{Gueri06a}) for details.
We follow  a notation of~\cite{SakumWeek95a}.
Let $K_{q/p}$ be a hyperbolic 2-bridge knot or link
(see, \emph{e.g.}, \cite{Murasugi96Book} for the definition and the
properties). 
%of 2-bridge knot/link).
Here we assume
that $p$ and $q$ are coprime integers such that
$2 \leq q < p/2$.
When $p$ is odd (resp. even), $K_{q/p}$ is a knot 
(resp. a 2-component link).
We use a continued fraction expression of $q/p$,
\begin{equation}
  q/p=
  \cfrac{1}{a_1+\cfrac{1}{a_2+\cfrac{1}{\ddots + \cfrac{1}{a_n}}}}
  =
  [a_1, a_2, \dots, a_n]
  ,
\end{equation}
where $n \geq 1$,
$a_j \in  \mathbb{Z}_{>0}$,
and $a_n \geq 2$.
We set
\begin{equation}
  c=\sum_{i=1}^n a_i .
\end{equation}
Correspondingly  we have the chain of
the  Farey triangles,
$(\sigma[1], \sigma[2],\dots, \sigma[c])$
as in Fig.~\ref{fig:Farey_2bridge}.
The vertices of each Farey triangle are assigned with rational numbers,
$d_1/d_2$, $d_3/d_4$, and $(d_1+d_3)/(d_2+d_4)$
with $d_i \in \mathbb{Z}$.

\begin{figure}[tbhp]
  \centering
  \includegraphics[width=0.97\textwidth]{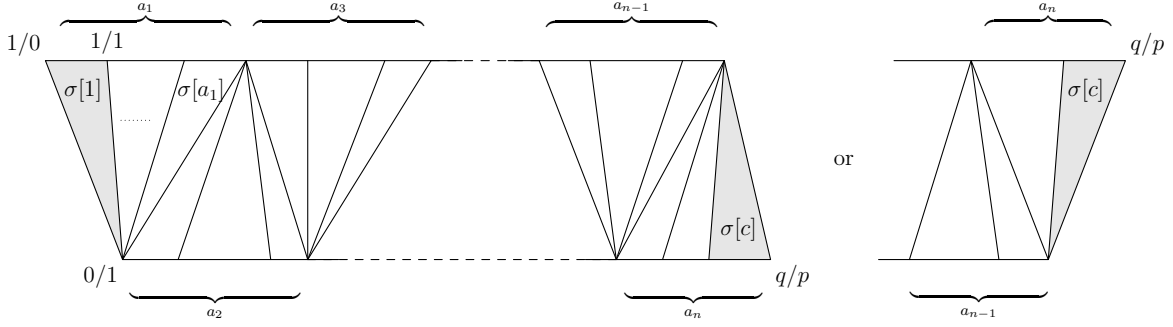}
  \caption{The Farey triangle}
  \label{fig:Farey_2bridge}
\end{figure}

Each Farey triangle determines a triangulation of 4-punctured sphere
$\Sigma_{0,4} =(\mathbb{R}^2\setminus \mathbb{Z}^2)/\Gamma$, where
$\Gamma$ is a transformation group  generated by $\pi$-rotations about
every point in~$\mathbb{Z}^2$.
When two Farey triangles~$\sigma[k]$ and~$\sigma[k+1]$
are adjacent, a flip connects 
triangulations of $\Sigma_{0,4}$ as in Fig.~\ref{fig:triangulation_2bridge}.
Namely when~$\sigma[k+1]$ is in the right (resp. left) to~$\sigma[k]$,
a flip~$R$ (resp.~$L$) acts on the triangulation of~$\Sigma_{0,4}$.
In constructing a canonical  decomposition of 
the link complement
$S^3\setminus K_{q/p}$,
the triangulations for
the first  Farey triangle~$\sigma[1]$  collapses into a single edge,
and  the Farey triangle $\sigma[2]$ is folded along the edge of $1/2$.
Similarly the triangle $\sigma[c]$ collapses into an edge, and
$\sigma[c-1]$ is folded along the edge of $[a_1,\dots,a_n-2]$.
We use a sequence of symbols  to denote these flips as~\cite{SakumWeek95a}
\begin{equation}
  \label{flip_sphere}
  F_1 \, F_2 \cdots F_{c-3}
  =
  \begin{cases}
    R^{a_1-1} \, L^{a_2} \, R^{a_3} \cdots R^{a_{n-1}} \, L^{a_n-2},
    &
    \text{when $n$ is even,}
    \\[2mm]
    R^{a_1-1} \, L^{a_2} \, R^{a_3} \cdots L^{a_{n-1}} \, R^{a_n-2} ,
    &
    \text{when $n$ is odd.}
  \end{cases}
\end{equation}
Here $F_k$ denotes a symbol, $F_k=R$ or $L$,
and acts on the triangulation for $\sigma[k+1]$.

\begin{figure}[tbhp]
  \begin{center}
    \begin{minipage}{0.41\linewidth}
      \centering
      \includegraphics[scale=0.7]{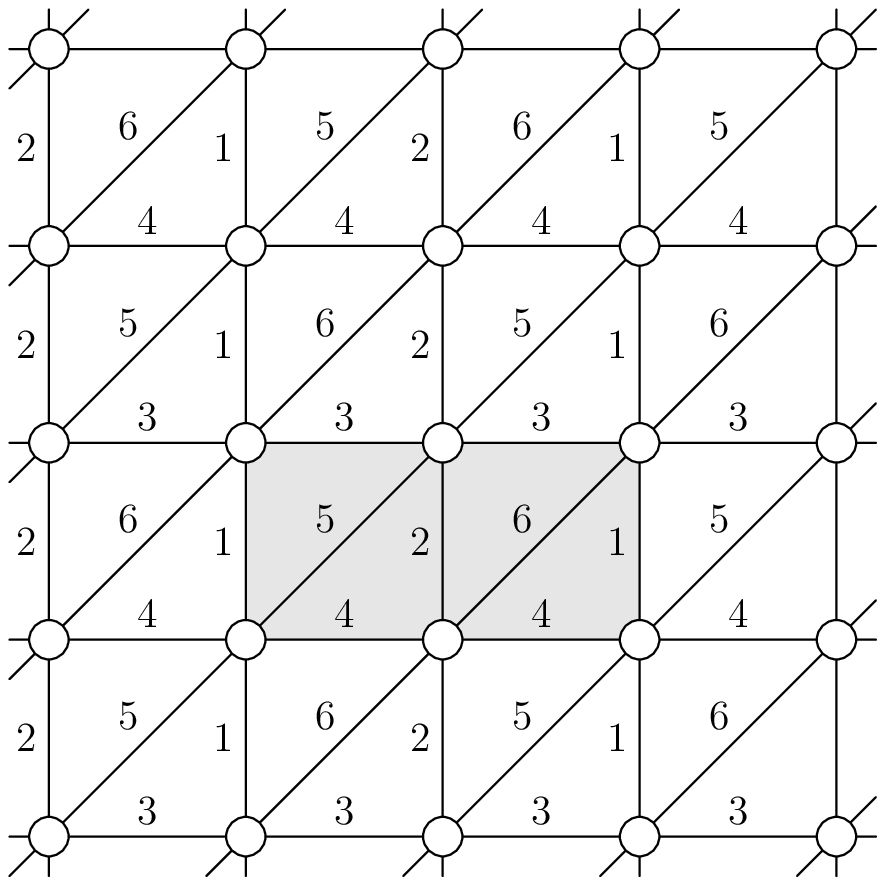}
    \end{minipage}
    $
    \begin{array}[]{c}
      {R}\\
      \nearrow
      \\[17mm]
      \searrow\\
      {L}
    \end{array}
    $
    \begin{minipage}{0.41\linewidth}
      \centering
      \includegraphics[scale=0.7]{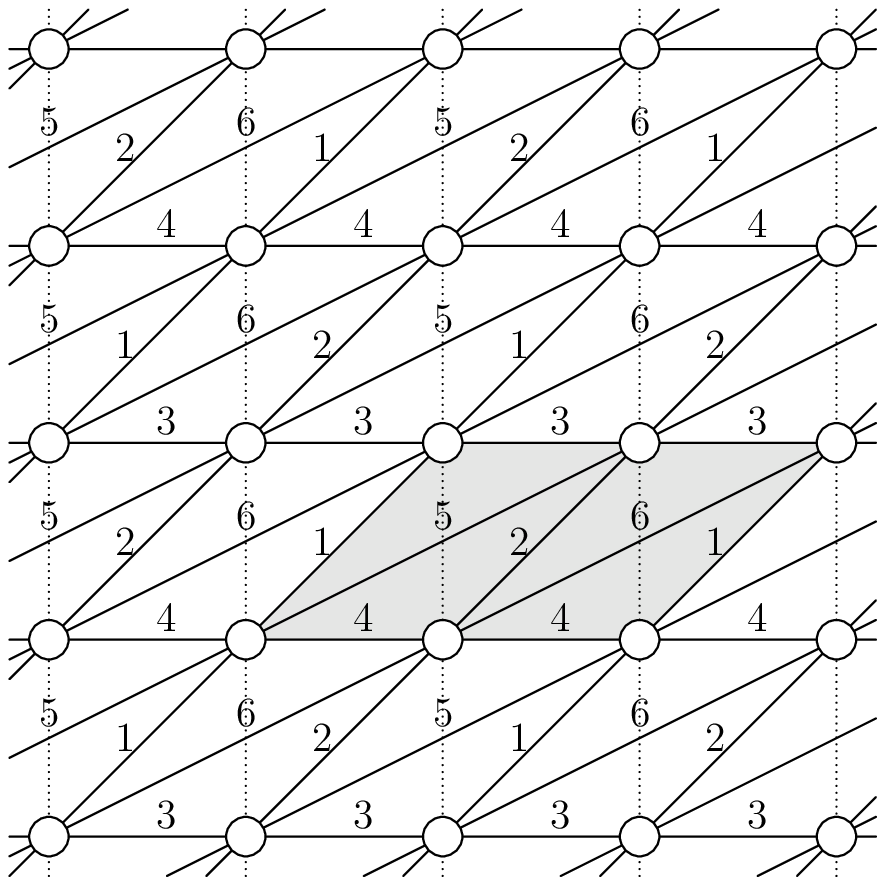}

      \vspace{11mm}

      \includegraphics[scale=0.7]{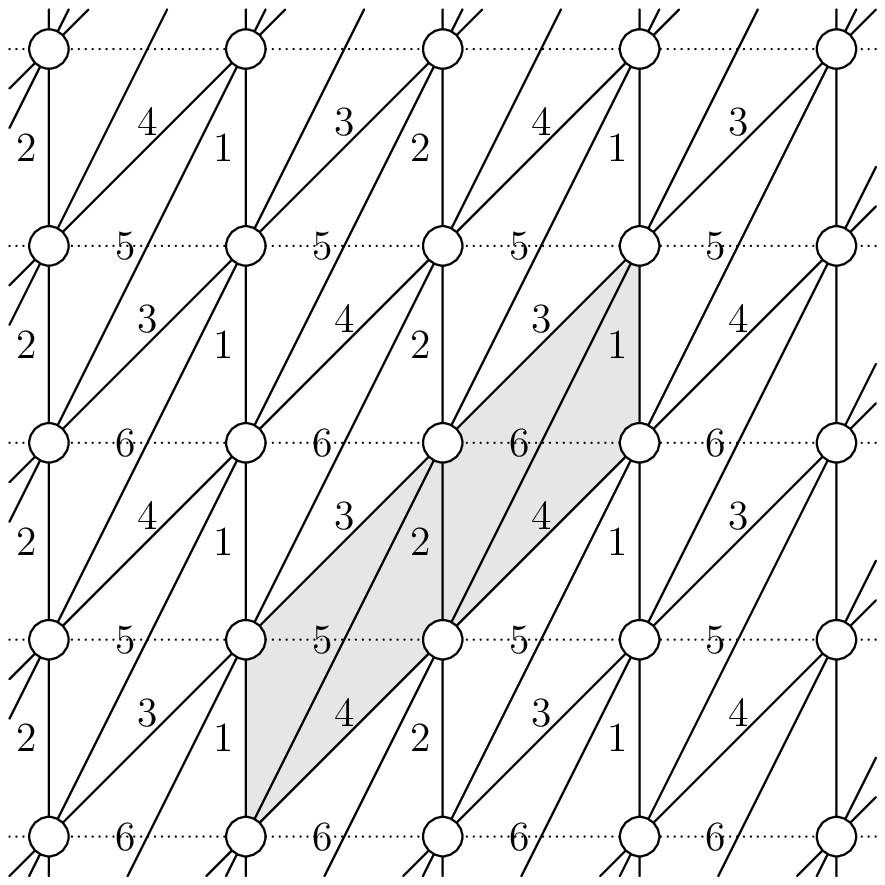}
    \end{minipage}
  \end{center} 
  \caption{A triangulation of 4-punctured sphere $\Sigma_{0,4}$ (left),
    where a
    fundamental region is colored gray.
    The flips of the triangulation are given in the right hand side.
  }
  \label{fig:triangulation_2bridge}
\end{figure}

\begin{figure}[tbhp]
  \centering
  \includegraphics[]{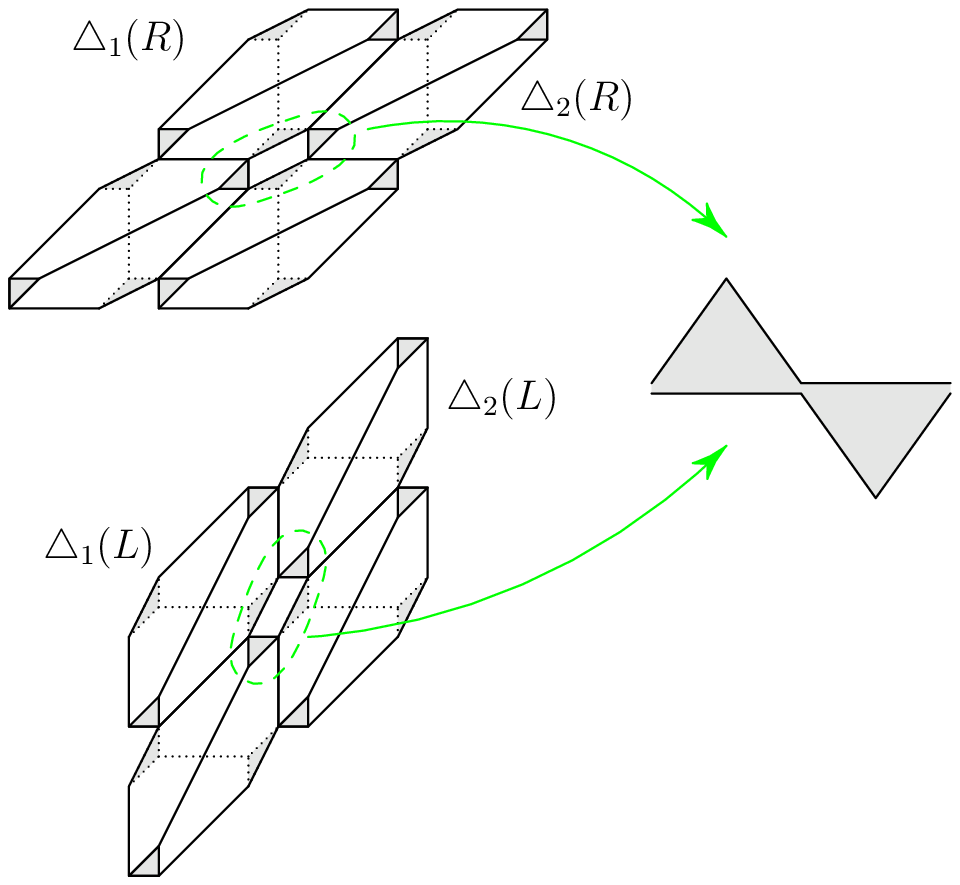}
  \caption{Left:
    The tetrahedra $\triangle_i(R)$ and $\triangle_i(L)$
    assigned to the
    flip $R$ and $L$ on the four-punctured sphere $\Sigma_{0,4}$
    in
    Fig.~\ref{fig:triangulation_2bridge}.
    Right:
    Only a single peripheral annulus is illustrated.}
  \label{fig:sphere_2triangle}
\end{figure}

As in the case of the once-punctured torus bundle, we can assign 
ideal hyperbolic tetrahedra to each flip.
In the case of the four-punctured sphere~$\Sigma_{0,4}$, 
a pair of
ideal hyperbolic tetrahedra,
$\triangle_1(F_k)$ and $\triangle_2(F_k)$,
are associated to the flip $F_k$
as  in
Fig.~\ref{fig:sphere_2triangle}.
Here  we have  illustrated only   a single peripheral annulus, since
the combinatorics of four peripheral annuli are same.
Due to the folding of $\sigma[2]$, the first pair of tetrahedra $\triangle_i(F_1)$
is folded at the edge corresponding to $1/2$.
Similarly the last pair of tetrahedra $\triangle_i(F_{c-3})$ is also
folded at the edge corresponding to $[a_1,\dots,a_{n}-2]$.

%%%%%%%%%%
\subsection{\mathversion{bold}$y$-pattern and Hyperbolic Volume}

We first formulate the flips~\eqref{flip_sphere} 
by use of the
mutations in the $y$-variables.
We set a triangulation of
the  four-punctured sphere $\Sigma_{0,4}$ as in
Fig.~\ref{fig:triangulation_2bridge}.
It induces the cluster algebra of $N=6$ with 
the exchange matrix $\mathbf{B}$
%
%A triangulation of four-punctured sphere $\Sigma_{0,4}$ in
%Fig.~\ref{fig:triangulation_2bridge},
%where each edge is labeled by a subscript of $y$-variable,
% induces a quiver in
%Fig.~\ref{fig:quiver_2bridge}, whose exchange matrix $\mathbf{B}$ is
\begin{equation}
  \label{sphere_B}
  \mathbf{B}=
  \begin{pmatrix}
    0 & 0 & -1 & -1 & 1 & 1 \\
    0 & 0 & -1 & -1 & 1 & 1 \\
    1 & 1 & 0 & 0 & -1 & -1 \\
    1 & 1 & 0 & 0 & -1 & -1 \\
    -1 & -1 & 1 & 1 & 0 & 0 \\
    -1 & -1 & 1 & 1 & 0 & 0 
  \end{pmatrix},
\end{equation}
whose  quiver is in
Fig.~\ref{fig:quiver_2bridge}.

\begin{figure}[tbhp]
  \centering
  \includegraphics[]{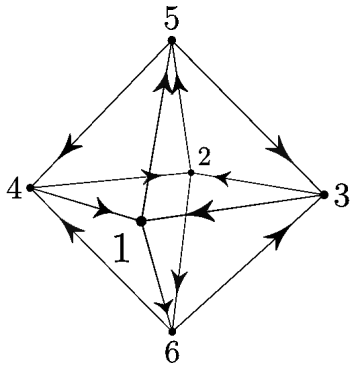}
  \caption{A quiver associated to
    the triangulation of the four-punctured
    sphere $\Sigma_{0,4}$ in Fig.~\ref{fig:triangulation_2bridge}.
    The labeling of  vertex corresponds to that of  edge in
    Fig.~\ref{fig:triangulation_2bridge}.
  }
  \label{fig:quiver_2bridge}
\end{figure}

The flips, $R$ and $L$, in Fig.~\ref{fig:triangulation_2bridge},
are
simply identified with
permuted mutations as
\begin{align}
  \label{sphere_mutation}
    {R}
    & = s_{5,6} \, s_{1,5} \, s_{2,6} \, \mu_1 \, \mu_2 ,
    &
%    \\[1ex]
%
    {L}
    & = s_{5,6} \, s_{3,5} \, s_{4,6} \, \mu_3 \, \mu_4 .
  \end{align}
Here,  as in the case of the once-punctured torus, we have inserted
the permutations $s_{i,j}$
so that
the exchange matrix
$\mathbf{B}$~\eqref{sphere_B} is invariant under $R$ and $L$.
Explicit actions on
the $y$-variables are written as
\begin{align}
  &
  (\boldsymbol{y},\mathbf{B})
  \xrightarrow{R}
  (R(\boldsymbol{y}), \mathbf{B}),
  &
  &
  (\boldsymbol{y},\mathbf{B})
  \xrightarrow{L}
  (L(\boldsymbol{y}), \mathbf{B}),
\end{align}
where
\begin{align}
  \label{explicit_y_mutation}
  &
  R(\boldsymbol{y})
  =
  \begin{pmatrix}
    y_5 \, (1 + y_1^{~-1})^{-1} \, ( 1 + y_2^{~-1})^{-1}
    \\
    y_6 \, (1 + y_1^{~-1})^{-1} \, ( 1 + y_2^{~-1})^{-1}
    \\
    y_3 \, (1 + y_1 ) \, (1+y_2) 
    \\
    y_4 \, (1 + y_1 ) \, (1+y_2) 
    \\
    y_2^{~-1}
    \\
    y_1^{~-1}
  \end{pmatrix}^\top
  ,
  &
  &
  L(\boldsymbol{y})
  =
  \begin{pmatrix}
    y_1 \, (1 + y_3^{~-1})^{-1} \, ( 1 + y_4^{~-1})^{-1}
    \\
    y_2 \, (1 + y_3^{~-1})^{-1} \, ( 1 + y_4^{~-1})^{-1}
    \\
    y_5 \, (1 + y_3 ) \, (1+y_4) 
    \\
    y_6 \, (1 + y_3 ) \, (1+y_4) 
    \\
    y_4^{~-1}
    \\
    y_3^{~-1}
  \end{pmatrix}^\top .
\end{align}

\begin{definition}
  A $y$-pattern for $K_{q/p}$ is
  $\boldsymbol{y}[k]$ for $k=1,2, \dots, c-2$ defined by
  \begin{equation}
    \label{pattern_sphere}
    \boldsymbol{y}[k+1]=
    F_k(\boldsymbol{y}[k]) ,
  \end{equation}
  where $F_k$ is $R$ or $L$ as  in~\eqref{flip_sphere}.
\end{definition}

\begin{prop}
  \label{thm:volume_sphere}
  Let $\boldsymbol{y}[k]$ be a $y$-pattern for $K_{q/p}$
  constructed from 
  an  initial condition
  \begin{equation}
    \label{folding_y_bottom}
    \boldsymbol{y}[1]
    =
    \left(
      y,y, -\frac{1}{y}, -\frac{1}{y}, -1, -1
    \right)    .
  \end{equation}
  Here $y$ is a   solution of
  \begin{equation}
    \label{folding_y_top}
    \begin{cases}
      y[c-2]_3 = y[c-2]_4
      =-1,
      & \text{if $n$ is even,}
      \\[2mm]
      y[c-2]_1=y[c-2]_2
      =-1,
      & \text{if $n$ is odd,}
    \end{cases}
  \end{equation}
  such that
  each modulus $z_i[k]$ for $i=1, 2$ and $k=1,2,\dots,c-3$ defined by
  \begin{equation}
    \label{sphere_z_from_y}
    z_i[k]
    =
    \begin{cases}
      \displaystyle
      -\frac{1}{y[k]_i},
      & \text{if $F_k=R$,}
      \\[3ex]
      \displaystyle
      -\frac{1}{y[k]_{2+i}},
      & \text{if $F_k=L$,}
    \end{cases}
  \end{equation}
%   \begin{align}
%     \label{sphere_z_from_y}
%     z_1[k]
%     & =
%     \begin{cases}
%       \displaystyle
%       -\frac{1}{y[k]_1} ,
%       &
%       \text{if $F_{k} = {R}$,}
%       \\[3ex]
% %
%       \displaystyle
%       -\frac{1}{y[k]_3} ,
%       &
%       \text{if ${F}_k = {L}$,}
%     \end{cases}
% %
%     &
% %
%     z_2[k]
%     & =
%     \begin{cases}
%       \displaystyle
%       -\frac{1}{y[k]_2} ,
%       &
%       \text{if $F_{k} = {R}$,}
%       \\[3ex]
% %
%       \displaystyle
%       -\frac{1}{y[k]_4} ,
%       &
%       \text{if ${F}_k = {L}$ ,}
%     \end{cases}
%   \end{align}
  is in the upper half plane~$\mathbb{H}$.
  Then $z_i[k]$ denotes the modulus of the tetrahedron
  $\triangle_i(F_k)$.
  % corresponding to
  % the $k$-th flip $F_k$.
\end{prop}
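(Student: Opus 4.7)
The plan is to mirror the strategy used in Section~\ref{sec:prop_torus} for the once-punctured torus bundle: under the identification $z = -1/y$ established in Section~\ref{sec:interrelation}, the claim that $z_i[k]$ is the modulus of $\triangle_i(F_k)$ amounts to verifying that the gluing equation around every edge of $S^3 \setminus K_{q/p}$ and the completeness equation around every cusp curve hold when the $z_i[k]$ are defined by \eqref{sphere_z_from_y} from a $y$-pattern satisfying \eqref{pattern_sphere}, \eqref{folding_y_bottom}, and \eqref{folding_y_top}. I would read off all these equations from the developing map in Fig.~\ref{fig:cusp_sphere_1}; only one peripheral annulus needs to be treated, since the four annuli are combinatorially identical.

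The first observation is that the three pairings $y[k]_1 = y[k]_2$, $y[k]_3 = y[k]_4$, $y[k]_5 = y[k]_6$ present in the initial seed \eqref{folding_y_bottom} are preserved by both $R$ and $L$. This is immediate from \eqref{explicit_y_mutation}, where twin indices transform symmetrically. Geometrically it reflects the $\mathbb{Z}/2$-symmetry of the triangulation of $\Sigma_{0,4}$, so that the two tetrahedra $\triangle_1(F_k)$ and $\triangle_2(F_k)$ carry the same modulus, and all subsequent computations reduce to telescoping products in a single chain of moduli, just as in the torus case.

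For an edge cycle lying in the interior of the chain (away from the collapsed triangles $\sigma[1]$ and $\sigma[c]$), I would establish the gluing identity by computing a product of the form $z_i[k+1] \, (z_i''[k])^{m} \, z_i[k-1]$ with $m$ read off Fig.~\ref{fig:cusp_sphere_1} and $z_i=-1/y_i$ substituted. The mutation formulae \eqref{explicit_y_mutation} then collapse the factors $(1+y[k]_j)^{\pm 1}$ into $y$-variables at adjacent layers, which telescope to $1$ in the same fashion as in Section~\ref{sec:prop_torus}. The hard part is the behavior at the two ends. At the bottom, the degenerate initial values $y[1]_5 = y[1]_6 = -1$ make the factors $(1 + y[1]_5)$ and $(1 + y[1]_6)$ vanish in a controlled way; this is precisely the algebraic manifestation of the collapse of $\sigma[1]$ and the folding of $\sigma[2]$, since the vanishing $1+y$ factors pair off with the reciprocal factors produced by the first flip $F_1$ so that the first edge cycle closes up. A parallel analysis at $\sigma[c]$ shows that closure of the top edge cycles is equivalent to \eqref{folding_y_top}, and the case split on the parity of $n$ arises because, by \eqref{sphere_mutation}, $R$ and $L$ act on the disjoint index pairs $(1,2)$ and $(3,4)$, so the last flip $F_{c-3}$ is $R$ when $n$ is odd and $L$ when $n$ is even.

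Finally, the completeness condition on each cusp is verified by computing the product of the $z_i$, $z_i'$, $z_i''$ along a meridian curve in Fig.~\ref{fig:cusp_sphere_1} and applying the same telescoping argument; the folding contributions again reduce to degenerate factors that cancel by \eqref{folding_y_bottom} and \eqref{folding_y_top}. Because the $z_i[k]$ are assumed to lie in $\mathbb{H}$, the resulting hyperbolic structure on $S^3 \setminus K_{q/p}$ is the unique complete one, so the $z_i[k]$ must indeed be the moduli of the $\triangle_i(F_k)$, completing the proof.
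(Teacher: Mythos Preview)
Your overall strategy---check gluing and completeness equations on the developing map of a single peripheral annulus, using the mutation rules~\eqref{explicit_y_mutation} to telescope---is exactly what the paper does. But two concrete ingredients are missing or misstated.

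First, the case split is incomplete. You split only on the parity of~$n$, and assert that $F_{c-3}=R$ when $n$ is odd and $F_{c-3}=L$ when $n$ is even. This is true only when $a_n>2$; when $a_n=2$ the block $R^{a_n-2}$ or $L^{a_n-2}$ is empty, the last flip comes from the $(n-1)$-st block and carries the \emph{opposite} letter, and the top of the developing map changes accordingly. The paper treats four cases (parity of~$n$ times $a_n>2$ versus $a_n=2$), each with its own picture of the upper end; without this your check of the top gluing and completeness equations does not go through in half the cases.

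Second, the pairing invariants $y[k]_1=y[k]_2$, $y[k]_3=y[k]_4$, $y[k]_5=y[k]_6$ that you isolate are correct and useful, but they are not what closes the completeness equation at the top. What is actually needed is the \emph{product} conservation law (Remark~\ref{lem:yyy})
\[
  y[k]_{i_1}\,y[k]_{i_2}\,y[k]_{i_3}=1,
  \qquad (i_1,i_2,i_3)\in\{1,2\}\times\{3,4\}\times\{5,6\},
\]
which holds at $k=1$ by~\eqref{folding_y_bottom} and is preserved by both $R$ and $L$. After telescoping, the top completeness product reduces to an expression like $-y[c-2]_2\,y[c-2]_5$; the folding condition~\eqref{folding_y_top} supplies $y[c-2]_3=-1$, and it is the product law that converts this into $y[c-2]_2\,y[c-2]_5=-1$. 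Your pairing invariants alone cannot produce this relation. Relatedly, your description of the bottom mechanism is off: the values $y[1]_5=y[1]_6=-1$ never appear inside a factor $(1+y)$ (the flips $R$ and $L$ only involve $(1+y_1),(1+y_2)$ or $(1+y_3),(1+y_4)$); their role is that the bottom gluing product telescopes to $-1/y[1]_6$, which then equals~$1$ directly.
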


\begin{remark}
  \label{rem:cv_bridge}
  See Appendix of~\cite{Gueri06a},
  where  proved is an
  existence of a geometric solution
  of~\eqref{folding_y_top}.
  It is also announced in~\cite{ASWY07}.
\end{remark}

\begin{coro}
  The hyperbolic volume of the link complement 
  $S^3 \setminus K_{q/p}$ is given by
  \begin{equation}
    \Vol(S^3 \setminus K_{q/p})
    =
    \sum_{k=1}^{c-3}
    \sum_{i=1,2}
    D( z_i[k] ),
  \end{equation}
  where
  $D(z)$ is the Bloch--Wigner function~\eqref{BW}, and
  $z_i[k]$'s are the moduli \eqref{sphere_z_from_y}.
\end{coro}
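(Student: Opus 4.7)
The plan is to combine Proposition~\ref{thm:volume_sphere} with the decomposition-based volume formula~\eqref{volM}. First I would recall from Section~\ref{sec:knot_decompose} that the Sakuma--Weeks canonical decomposition expresses $S^3\setminus K_{q/p}$ as the union of the $2(c-3)$ ideal tetrahedra $\triangle_i(F_k)$, $i=1,2$, $k=1,\ldots,c-3$, together with the foldings at $\sigma[2]$ and $\sigma[c-1]$ that identify certain faces of the first and last pairs. By Proposition~\ref{thm:volume_sphere}, once the $y$-pattern~\eqref{pattern_sphere} is initialized by~\eqref{folding_y_bottom} and the free parameter $y$ is chosen as a solution of the closing condition~\eqref{folding_y_top} that lies on the geometric branch (so every $z_i[k]\in\mathbb{H}$), the numbers $z_i[k]$ defined in~\eqref{sphere_z_from_y} are precisely the moduli of the tetrahedra $\triangle_i(F_k)$ for the complete hyperbolic structure.

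Given these moduli, the classical identity~\eqref{volM}, $\Vol(M)=\sum_\nu D(z_\nu)$, applied to the full list $\{\triangle_i(F_k): i=1,2,\; k=1,\ldots,c-3\}$ of ideal tetrahedra in the canonical decomposition, immediately yields
\begin{equation*}
  \Vol(S^3\setminus K_{q/p})
  =\sum_{k=1}^{c-3}\sum_{i=1,2} D\bigl(z_i[k]\bigr).
\end{equation*}
The hypothesis $z_i[k]\in\mathbb{H}$ guarantees that each summand is positive, consistently with the hyperbolicity assumption on $K_{q/p}$.

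The substantive content has therefore been absorbed into Proposition~\ref{thm:volume_sphere}, whose proof is the real obstacle: one must verify that the cluster-algebraic $y$-pattern actually encodes the complete hyperbolic structure, i.e.\ that every edge gluing equation and every cusp completeness condition is satisfied. I would expect this verification to run in parallel with the once-punctured torus case treated in Section~\ref{sec:prop_torus}: by tracing products of dihedral angles $z_i[k]$, $z_i'[k]$, $z_i''[k]$ along the closed curves in the developing map of each peripheral annulus, using the explicit mutation formulas~\eqref{explicit_y_mutation}, and handling the two collapsed Farey triangles $\sigma[1]$ and $\sigma[c]$ via the folding relations~\eqref{folding_y_bottom}--\eqref{folding_y_top} at the top and bottom of the tetrahedral stack. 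The bookkeeping at the folded ends, where some of the usual edges degenerate, is the main subtlety; once those identities are in place, the corollary follows with no further calculation.
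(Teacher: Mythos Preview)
Your proposal is correct and matches the paper's approach: the corollary is stated without proof as an immediate consequence of Proposition~\ref{thm:volume_sphere} together with the general volume formula~\eqref{volM}, and the substantive work is deferred to the proof of that proposition in Section~4.3, which proceeds exactly as you anticipate---by checking gluing and completeness identities on the developing map of a peripheral annulus using~\eqref{explicit_y_mutation}, with separate cases governed by the parity of $n$ and whether $a_n=2$ or $a_n>2$ to handle the folded ends.
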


% We will use the following lemma
% which is easily shown by induction on $k$.
% \begin{lemma}
% A $y$-pattern with the initial condition 
% \eqref{folding_y_bottom} satisfies
% \end{lemma}

\begin{remark}
  \label{lem:yyy}
  Under the initial condition \eqref{folding_y_bottom},
  we have invariants of $y$-variable under a mutation,
  \begin{align}
    \label{yyy}
    y[k]_{i_1} \, y[k]_{i_2} \, y[k]_{i_3} = 1,
  \end{align}
  for $k=1,2,\ldots,c-2$ and 
  $(i_1, i_2, i_3) \in \{1,2\}\times\{3,4\} \times \{5,6\}$.
\end{remark}
%%%%%%%%%%%%%
\subsection{Proof of Proposition~\ref{thm:volume_sphere}}

We shall check that the moduli $z_i[k]$ of the ideal hyperbolic
tetrahedra written in terms of
the cluster $y$-variables as~\eqref{sphere_z_from_y}
fulfill both the gluing conditions and the completeness conditions by
use of the developing map.

We study four cases separately,
\begin{enumerate}
  \renewcommand{\theenumi}{\Roman{enumi}}
\item even $n$, and $a_n >2$,
  \label{enum:case1}
  \begin{align*}
    &F_1\cdots F_{c-3}=R^{a_1-1} L^{a_2} \cdots L^{a_n-2},
    &
    &y[c-2]_3=y[c-2]_4=-1,
  \end{align*}

\item even $n$, and $a_n =2$,
  \label{enum:case2}
  \begin{align*}
    &F_1\cdots F_{c-3}=R^{a_1-1} L^{a_2} \cdots R^{a_{n-1}},
    &
    &y[c-2]_3=y[c-2]_4=-1,
  \end{align*}

\item odd $n$, and $a_n >2$,
  \label{enum:case3}
  \begin{align*}
    &F_1\cdots F_{c-3}=R^{a_1-1} L^{a_2} \cdots R^{a_n-2},
    &
    &y[c-2]_1=y[c-2]_2=-1,
  \end{align*}

\item odd $n$, and $a_n =2$,
  \label{enum:case4}
  \begin{align*}
    &F_1\cdots F_{c-3}=R^{a_1-1} L^{a_2} \cdots L^{a_{n-1}},
    &
    &y[c-2]_1=y[c-2]_2=-1.
  \end{align*}
\end{enumerate}

\begin{figure}[tbhp]
  \centering
  \includegraphics[]{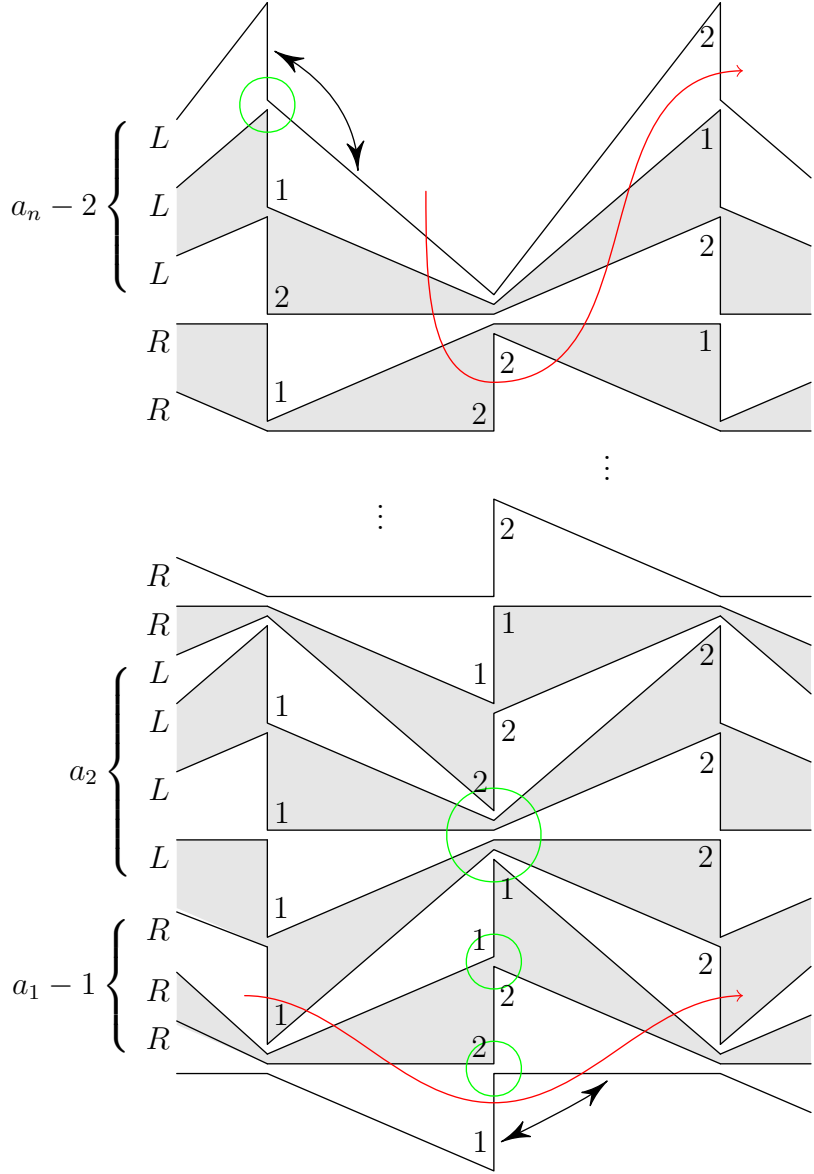}
  \caption{A    developing map  of one peripheral annulus
    for
    the case~\eqref{enum:case1}.
    Here $R$ and $L$ respectively denote tetrahedra $\triangle_i(R)$
    and $\triangle_i(L)$ in Fig.~\ref{fig:sphere_2triangle}.
    To  emphasize the layered structure, 
    each vertex is opened up   and
    each layer is colored alternately.
    See Fig.~\ref{fig:sphere_2triangle}.
    By $1$ and $2$ we denote dihedral angles $z_1[k]$ and $z_2[k]$
    respectively.
    Two
%    black
    left-right-arrows  denote an identification of edges,
    which are consequences
    of folding of $\sigma[2]$ and $\sigma[c-1]$.
  }
  \label{fig:cusp_sphere_1}
\end{figure}

We start a proof for the  case~\eqref{enum:case1}.
We recall that the $y$-pattern is
\begin{equation*}
  \boldsymbol{y}[1]\xrightarrow{R}
  \cdots \xrightarrow{R}
  \boldsymbol{y}[a_1]
  \xrightarrow{L}
  \boldsymbol{y}[a_1+1]\xrightarrow{L}\cdots \xrightarrow{L}
  \boldsymbol{y}[a_1+a_2]\xrightarrow{R} \cdots.
\end{equation*}
A typical developing map of one peripheral annulus
is depicted in Fig.~\ref{fig:cusp_sphere_1}.
% To see that the $y$-variable is related to modulus of ideal
% tetrahedra $\triangle_i(F_k)$
% through~\eqref{sphere_z_from_y}, we need to check that both
% gluing conditions and a completeness condition are fulfilled.
%For example,
First of all,
a gluing condition for a 
% green
circle in the bottom of
Fig.~\ref{fig:cusp_sphere_1}
can be checked as follows;
\begin{equation*}
  \begin{aligned}
    z_2[2]  \,    z_1^{\prime \prime}[1] \, z_2^{\prime \prime}[1] 
    & =
    \frac{-1}{y[2]_2}\,
    \frac{1}{1+y[1]_1^{~-1}} \,    \frac{1}{1+y[1]_2^{~-1}}
    \\
    & =
    \frac{-1}{y[1]_6} = 1 ,
  \end{aligned}
\end{equation*}
where we have used
$\boldsymbol{y}[2]=R(\boldsymbol{y}[1])$ as given
in~\eqref{explicit_y_mutation}, and the last equality is
from~\eqref{folding_y_bottom}.
In the same manner, a gluing condition for the second
% green
circle
from the bottom
can be checked by use of
$\boldsymbol{y}[3]=R(\boldsymbol{y}[2])$ and
$\boldsymbol{y}[2]=R(\boldsymbol{y}[1])$
as
\begin{equation*}
  \begin{aligned}
    z_1[3] \, z_1^{\prime\prime}[2] \, z_2^{\prime \prime}[2] \,
    z_2[1]
    &=
    \frac{1}{y[3]_1} \,
    \frac{1}{1+y[2]_1^{~-1}} \,
    \frac{1}{1+y[2]_2^{~-1}} \,
    \frac{1}{y[1]_2}
    \\
    &
    = \frac{1}{y[2]_5 \, y[1]_2}
    = 1 .
  \end{aligned}
\end{equation*}
We can check that
$z_1[k+1] \, z_1^{\prime\prime}[k] \, z_2^{\prime \prime}[k] \,
z_2[k-1]=1$,
and that
this type of identity denotes a gluing condition composed from four
angles in
Fig.~\ref{fig:cusp_sphere_1}.
A gluing condition for a 
% green
circle in the middle of the figure
is also checked as follows,
\begin{align*}
  &
  z_2[a_1+a_2] \cdot
  \prod_{k=a_1-1}^{a_1+a_2-1} z_1^{\prime \prime}[k] \,
  z_2^{\prime\prime}[k]
  \cdot
  z_1[a_1-2]
  \\
  & = \frac{1}{y[a_1+a_2]_2} \cdot
  \prod_{k=a_1}^{a_1+a_2-1}
  \frac{1}{1+ y[k]_3^{~-1}} \, \frac{1}{1+y[k]_4^{~-1}}
  \cdot
\frac{1}{1+ y[a_1-1]_1^{~-1}} \, \frac{1}{1+y[a_1-1]_2^{~-1}} \,
  \cdot
  \frac{1}{y[a_1-2]_1}
  \\
  & = \frac{1}{y[a_1+a_2]_2} \cdot
  \prod_{k=a_1}^{a_1+a_2-1}
  \frac{1}{1+ y[k]_3^{~-1}} \, \frac{1}{1+y[k]_4^{~-1}}
  \cdot
  y[a_1]_2
  \\
  &
  = \frac{1}{y[a_1+a_2]_2} y[a_1+a_2]_2
  =1 .
%  & = \frac{1}{y[a_1+a_2]_4} \cdot
%  \prod_{k=a_1}^{a_1+a_2-1}
%  \left( 1+ y[k]_1 \right) \, \left( 1+y[k]_2 \right)
%  \cdot
%  \left(1+y[a_1-1]_3 \right)  \, \left( 1+y[a_1-1]_4 \right) \,
%  \frac{1}{y[a_1-2]_3}
%  \\
%  & = \frac{1}{y[a_1]_4} \,
%  \left(1+y[a_1-1]_3 \right)  \, \left( 1+y[a_1-1]_4 \right) \,
%  \frac{1}{y[a_1-2]_3}
%  \\
%  &
%  =
%  \frac{1}{y[a_1-1]_6 \, y[a_1-2]_3}
%  =1 .
\end{align*}
Other gluing conditions can be checked by the similar computations.

A completeness condition can be read from
%red
curves in the figure.
From the lower curve we have
\begin{equation*}
  \begin{aligned}
    z_1[a_1]
    \cdot
    \prod_{k=2}^{a_1-1} z_1^\prime[k] \, z_2^\prime[k]
    \cdot
    \frac{z_2^\prime[1]}{z_1^{\prime\prime}[1]}
    & =
    \frac{-1}{y[a_1]_3} 
    \cdot
    \prod_{k=2}^{a_1-1}
    \left( 1+y[k]_1 \right) \, 
    \left(1+y[k]_2 \right)
    \cdot
    \frac{1}{y[1]_1}
    \\
    & =
    \frac{-1}{y[1]_1 \, y[1]_3}
    = 1 ,
  \end{aligned}
\end{equation*}
where we have used~\eqref{folding_y_bottom} in the last equality.
It is noted  that
the completeness condition can also be checked from
the  upper
%red 
curve in the figure as
\begin{equation*}
  \begin{aligned}
    &
    \frac{z_1^{\prime \prime}[c-3]}{z_2^\prime[c-3]}
    \cdot
    \prod_{k=c-a_n-1}^{c-4}
    z_1^{\prime \prime}[k] \, z_2^{\prime \prime}[k]
    \cdot
    z_2[c-a_n-2]
    \\
    & =
    -y[c-2]_2 \, y[c-2]_5
    = 1 ,
  \end{aligned}
\end{equation*}
where~\eqref{folding_y_top} and~\eqref{yyy}
are used in the last equality.

This completes a proof for the case~\eqref{enum:case1}.

\begin{figure}[htbp]
  \centering
  \includegraphics[]{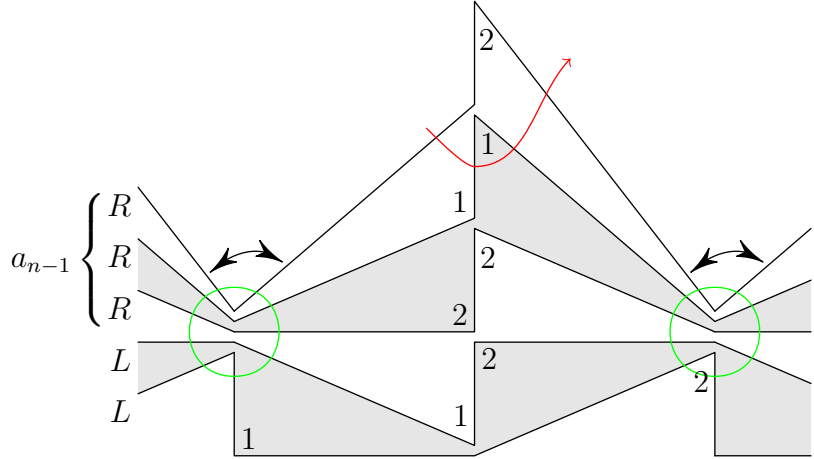}
  \caption{An upper part of developing map for 
    the case~\eqref{enum:case2}.}
  \label{fig:cusp_sphere_4}
\end{figure}

In the  case~\eqref{enum:case2},
the developing map is slightly  different from the case~\eqref{enum:case1}:
the lower part of Fig.~\ref{fig:cusp_sphere_1} is same, but an
upper part  is replaced with
Fig.~\ref{fig:cusp_sphere_4}.
A consistency check of gluing equations is similar to the
case~\eqref{enum:case1}.
For example,
a gluing equation for a
% green
circle in Fig.~\ref{fig:cusp_sphere_4}
is 
\begin{align*}
  &
  \prod_{k=c-a_{n-1}-1}^{c-3}
  z_1^\prime[k] \, z_2^\prime[k]
  \cdot
  z_2[c-a_{n-1}-2]
  \\
  &   =
  \prod_{k=c-a_{n-1}}^{c-3}
  \left( 1 + y[k]_1 \right) \, \left( 1 + y[k]_2 \right)
  \cdot
  \left( 1 + y[c-a_{n-1}-1]_3 \right) \,
  \left( 1 + y[c-a_{n-1}-1]_4 \right) \,
  \frac{-1}{y[c-a_{n-1}-2]_4}
  \\
  & = - y[c-2]_3
  = 1  ,
\end{align*}
where the folding condition~\eqref{folding_y_top} is used in the last equality.
A
% red 
curve in
Fig.~\ref{fig:cusp_sphere_4}
is read as
%is completeness condition is  read from, and we can check as follows,
\begin{equation*}
  \begin{aligned}
    \frac{z_1^{\prime \prime}[c-3]}{z_2^\prime[c-3]} \,
    z_1[c-4]
    &=
    -y[c-2]_5 \, y[c-2]_2
    =1 ,
  \end{aligned}
\end{equation*}
where the last equality is a consequence of~\eqref{folding_y_top}
and~\eqref{yyy}, and
this  denotes a completeness condition.
This completes a proof for the case~\eqref{enum:case2}.

\begin{figure}[htbp]
  \centering
  \includegraphics[]{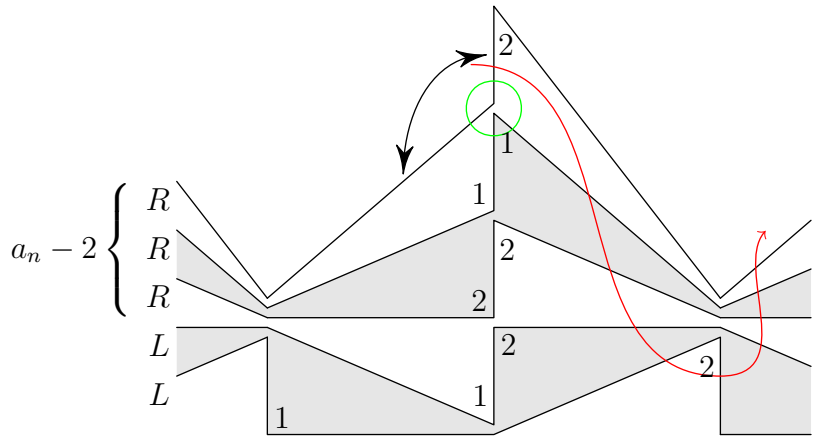}
  \caption{An upper part of developing map for 
    the case~\eqref{enum:case3}.}
  \label{fig:cusp_sphere_3}
\end{figure}

In the case~\eqref{enum:case3},
an upper part of Fig.~\ref{fig:cusp_sphere_1} is replaced with
Fig.~\ref{fig:cusp_sphere_3}.
A gluing condition for a
% green
circle in Fig.~\ref{fig:cusp_sphere_3}
can be checked as
\begin{align*}
  z_1^{\prime \prime}[c-3] \,  z_2^{\prime \prime}[c-3]
  z_1[c-4]
  &=
  \left( 1+y[c-3]_1^{~-1} \right)^{-1} \,
  \left( 1+y[c-3]_2^{~-1} \right)^{-1} \,
  \frac{-1}{y[c-4]_1}
  \\
  & = 
  - y[c-2]_2
  = 1 .
\end{align*}
Correspondingly
a completeness condition is read from a
% red 
curve in
Fig.~\ref{fig:cusp_sphere_3}, and we can check
\begin{equation*}
  \begin{aligned}
    \frac{z_1^\prime[c-3]}{z_2^{\prime\prime}[c-3]}
    \cdot
    \prod_{k=c-a_n-1}^{c-4}
    z_1^\prime[k] \, z_2^\prime[k]
    \cdot
    z_1[c-a_n-2]
    & 
    = - y[c-2]_6 \, y[c-2]_4
    = 1,
  \end{aligned}
\end{equation*}
by using~\eqref{folding_y_top} and~\eqref{yyy}
at the last equality.
This completes a proof for the case~\eqref{enum:case3}.

\begin{figure}[htbp]
  \centering
  \includegraphics[]{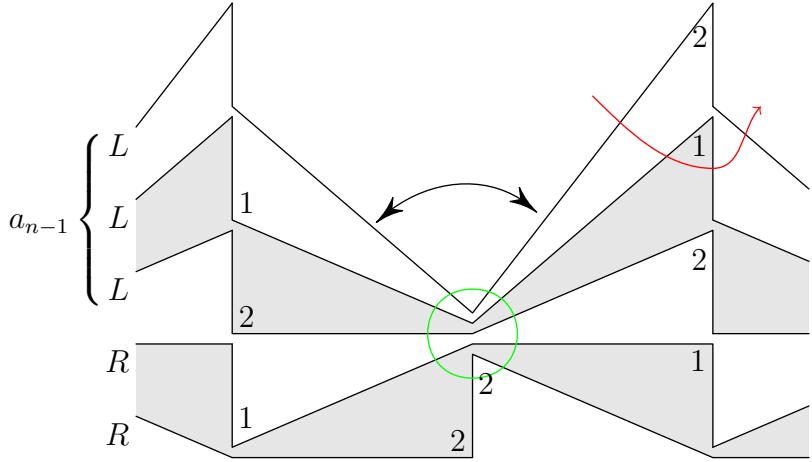}
  \caption{An upper part of developing map for 
    case~\eqref{enum:case4}.}
  \label{fig:cusp_sphere_2}
\end{figure}

In the last   case~\eqref{enum:case4}, an upper part of
Fig.~\ref{fig:cusp_sphere_1} is replaced by
Fig.~\ref{fig:cusp_sphere_2}.
A gluing condition for a
% green
curve in Fig.~\ref{fig:cusp_sphere_2}
is checked as
\begin{align*}
  &
  \prod_{k=c-a_{n-1}-1}^{c-3}
  z_1^{\prime \prime}[k] \, z_2^{\prime \prime}[k]
  \cdot
  z_2[c-a_{n-1}-2]
  \\
  & =
  \prod_{k=c-a_{n-1}}^{c-3}
  \frac{1}{
    1+ y[k]_3^{~-1}} \,
  \frac{1}{1+y[k]_4^{~-1}}
  \cdot
  \frac{1}{
    1+ y[c-a_{n-1}-1]_1^{~-1}} \,
  \frac{1}{1+y[c-a_{n-1}-1]_2^{~-1}} \,
  \frac{-1}{y[c-a_{n-1}-2]_2}
  \\
  & =
  \prod_{k=c-a_{n-1}}^{c-3}
  \left(   1+ y[k]_3^{~-1}\right)^{-1} \,
  \left(1+y[k]_4^{~-1} \right)^{-1}
  \cdot
  \left( - y[c-a_{n-1}]_1 \right)
  \\
  & =
  -y[c-2]_1
  = 1 .
\end{align*}
Correspondingly 
we have  a completeness condition corresponding to  a
% red 
curve,
\begin{align*}
  \frac{z_1^\prime[c-3]}{z_2^{\prime\prime}[c-3]} \,
  z_2[c-4] 
  & =
  \frac{1+y[c-3]_3}{y[c-3]_3} \,
  \left( 1 + y[c-3]_4 \right) \, \frac{-1}{y[c-4]_4}
  \\
  &
  =
  - y[c-2]_3 \, y[c-2]_6
  = 1,
\end{align*}
where we have used~\eqref{folding_y_top} and~\eqref{yyy}
at the last equality.
This completes a proof for the case~\eqref{enum:case4}.

To conclude, we have checked that both the gluing conditions and the
completeness condition are fulfilled for every cases~\eqref{enum:case1}--\eqref{enum:case4}.

%%%%%%%%%%%%%%%%%%
\subsection{Cluster Pattern and Complex Volume}
To study the complex volume of the complement of the $2$-bridge link
$K_{q/p}$,
we  reformulate the preceding result by use of the cluster variable~$\boldsymbol{x}$.
Recall that we use the tropical semifield in Def.~\ref{def:P} for the
coefficient~$\boldsymbol{\varepsilon}$.
Actions of the  flips $R$ and $L$ defined as permuted
cluster mutations~\eqref{sphere_mutation}
are explicitly given respectively by
\begin{align}
  &
  (\boldsymbol{x}, \boldsymbol{\varepsilon})
  \xrightarrow{R}
  R(\boldsymbol{x},
  \boldsymbol{\varepsilon})
  ,
  &
  &
  (\boldsymbol{x}, \boldsymbol{\varepsilon})
  \xrightarrow{L}
  L(\boldsymbol{x},
  \boldsymbol{\varepsilon}) ,
\end{align}
where
%{\color{blue}
\begin{equation}
\label{explicit_sphere_xe}
\begin{aligned}
  &
  R(\boldsymbol{x}, \boldsymbol{\varepsilon})
  =
  \left(
  \begin{pmatrix}
    x_5 \\
    x_6 \\
    x_3 \\
    x_4 \\[1ex]
    \displaystyle
    \frac{\ve_2}{1 \oplus \ve_2} \, \frac{x_3\, x_4}{x_2} 
    +\frac{1}{1 \oplus \ve_2} \frac{x_5 \, x_6}{x_2}
%    \frac{\ve_2 \,x_3 \, x_4 + x_5 \, x_6}{(1 \oplus \ve_2)x_2}
    \\[2ex]
    \displaystyle
    \frac{\ve_1}{1 \oplus \ve_1} \, \frac{x_3\, x_4}{x_1} 
    +\frac{1}{1 \oplus \ve_1} \frac{x_5 \, x_6}{x_1}
%    \frac{\ve_1 \,x_3 \, x_4 + x_5 \, x_6}{(1 \oplus \ve_1)x_1}
  \end{pmatrix}^\top ,
  \begin{pmatrix}
    \varepsilon_5 
    \displaystyle
    \frac{\ve_1}{1 \oplus \ve_1} \frac{\ve_2}{1 \oplus \ve_2}\\[2ex]
    \varepsilon_6 
    \displaystyle
    \frac{\ve_1}{1 \oplus \ve_1} \frac{\ve_2}{1 \oplus \ve_2}\\[2ex]
    \varepsilon_3 (1 \oplus \ve_1)(1 \oplus \ve_2)\\[1ex]
    \varepsilon_4 (1 \oplus \ve_1)(1 \oplus \ve_2)\\
    \varepsilon_2^{-1} \\
    \varepsilon_1^{-1} 
  \end{pmatrix}^\top 
  \right),
  \\
  &
  L(\boldsymbol{x}, \boldsymbol{\varepsilon})
  =
  \left(
  \begin{pmatrix}
    x_1 \\
    x_2 \\
    x_5 \\
    x_6 \\[1ex]
    \displaystyle
    \frac{1}{1 \oplus \ve_4} \, \frac{x_1 \, x_2}{x_4}
    +\frac{\varepsilon_4}{1 \oplus \ve_4 } \,    \frac{x_5 \, x_6}{x_4}    
%    \frac{x_1 \, x_2 +\varepsilon_4 \,x_5 \, x_6}
%    {(1 \oplus \ve_4) x_4}
    \\[2ex]
    \displaystyle
    \frac{1}{1 \oplus \ve_3} \, \frac{x_1 \, x_2}{x_3}
    +\frac{\varepsilon_3}{1 \oplus \ve_3 } \,    \frac{x_5 \, x_6}{x_3}    
%    \frac{x_1 \, x_2 +\varepsilon_3 \,x_5 \, x_6}
%         {(1 \oplus \ve_3) x_3}
  \end{pmatrix}^\top ,
  \begin{pmatrix}
    \varepsilon_1
    \displaystyle 
    \frac{\ve_3}{1 \oplus \ve_3} \frac{\ve_4}{1 \oplus \ve_4}\\[2ex]
    \varepsilon_2
    \displaystyle 
    \frac{\ve_3}{1 \oplus \ve_3} \frac{\ve_4}{1 \oplus \ve_4}\\[2ex]
    \varepsilon_5 (1 \oplus \ve_3) (1 \oplus \ve_4)\\[1ex]
    \varepsilon_6 (1 \oplus \ve_3) (1 \oplus \ve_4)\\
    \varepsilon_4^{-1} \\
    \varepsilon_3^{-1} 
  \end{pmatrix}^\top 
\right) .
\end{aligned}
\end{equation}
%}

\begin{definition}
  A cluster pattern of $K_{q/p}$ is
  $(\boldsymbol{x}[k], \boldsymbol{\varepsilon}[k])$
  for $k=1,2, \dots, c-2$ defined recursively by
  \begin{equation}
        (  \boldsymbol{x}[k+1],
    \boldsymbol{\varepsilon}[k+1] )
    =
    F_k\left(
      \boldsymbol{x}[k] ,
      \boldsymbol{\varepsilon}[k]
    \right) ,
  \end{equation}
  where $F_k$ is $R$ or $L$ as~\eqref{flip_sphere}.
\end{definition}

Prop.~\ref{thm:y_from_xe} shows that the $y$-pattern
$\boldsymbol{y}[k]$
in 
Prop.~\ref{thm:volume_sphere} can be given in terms of the cluster
pattern 
$(\boldsymbol{x}[k], \boldsymbol{\varepsilon}[k])$
as~\eqref{y_from_xe},
\begin{multline}
  \label{eq:y-vex}
  \boldsymbol{y}[k]
  =
  \left(
    \varepsilon[k]_1 \, \frac{x[k]_3 \, x[k]_4}{x[k]_5 \, x[k]_6},
    \varepsilon[k]_2 \, \frac{x[k]_3 \, x[k]_4}{x[k]_5 \, x[k]_6},
    \varepsilon[k]_3 \, \frac{x[k]_5 \, x[k]_6}{x[k]_1 \, x[k]_2},
    \right.
    \\
    \left.
    \varepsilon[k]_4 \, \frac{x[k]_5 \, x[k]_6}{x[k]_1 \, x[k]_2},
    \varepsilon[k]_5 \, \frac{x[k]_1 \, x[k]_2}{x[k]_3 \, x[k]_4},
    \varepsilon[k]_6 \, \frac{x[k]_1 \, x[k]_2}{x[k]_3 \, x[k]_4}
  \right) .
%  \nonumber
\end{multline}
%{\color{blue}

The  conditions~\eqref{folding_y_bottom} and~\eqref{folding_y_top} for
the $y$-variables,
which come from
the  folding of the Farey triangles $\sigma[2]$ and
$\sigma[c-1]$,
are fulfilled when we suppose the following
constraints on the cluster variables~$\boldsymbol{x}[k]$
and the coefficients~$\boldsymbol{\varepsilon}[k]$:
\begin{gather}
  \label{eq:init-ve}
  \psi(\boldsymbol{\varepsilon}[1])
  =
  ( -1, -1, 1, 1, -1, -1),
  \quad
  \text{or}
  \quad
  (1,1, -1, -1, -1, -1) ,
  \\[1ex]
  \label{sphere_x_initial}
  \boldsymbol{x}[1]
  =
  (x,x,x,x,x_5,x_6) ,
\end{gather}
and
\begin{gather}
  \label{folding_ve_top}
  \begin{cases}
    \psi(\varepsilon[c-2]_3)=\psi(\varepsilon[c-2]_4)=-1 ,
    & \text{if $n$ is even,}
    \\[1ex]
    \psi(\varepsilon[c-2])_2=\psi(\varepsilon[c-2]_1)=-1 ,
    & \text{if $n$ is odd,}
  \end{cases}
  \\[2ex]
  \label{folding_x_top}
  \begin{cases}
    x[c-2]_1=x[c-2]_2=x[c-2]_5=x[c-2]_6 ,
    & \text{if $n$ is even},
    \\[1ex]
    x[c-2]_3=x[c-2]_4=x[c-2]_5=x[c-2]_6 ,
    & \text{if $n$ is odd}.
  \end{cases}
\end{gather}
The initial cluster $x$-variable~\eqref{sphere_x_initial} 
means that
the edges for $x_1$, $x_2$, $x_3$, and $x_4$ are identified.
Here $\psi$ is defined in Def.~\ref{def:beta}.
At \eqref{folding_x_top} and in the rest of this section,
the coefficient~${\varepsilon}[k]_i$ in the cluster
variable~${x}[k]_i$
are  set to be~$+1$ or~$-1$
by acting the map~$\psi$. 
% we regard that
% the coefficients are in the image of $\psi$ for simplicity.
%and we abbreviate $\psi(\ve[k]_i)$ to $\ve[k]_i$ without confusion.
We should note that
one of the two conditions~\eqref{eq:init-ve} 
is chosen so that it is consistent with the 
constraint~\eqref{folding_ve_top}.
To fulfill~\eqref{eq:init-ve}, we set
\begin{equation*}
  \boldsymbol{\ve}[1] 
  = 
  (\delta^{m_1},\delta^{m_1},\delta^{m_2},\delta^{m_2},\delta^{m_3},\delta^{m_3}) ,
\end{equation*}
where
$(m_1, m_2, m_3)$ is
$(\text{odd, even, odd})$
(resp. $(\text{even, odd, odd})$)
for the first (resp. second) case.
%  $k,m$ are odd and $l$ is even 
% (resp. $l,m$ are odd and $k$ is even)
% for the first (resp. second) case.
%We also note that $\psi(x[1]_i) = x[1]_i$.
% 
By these conditions, the cluster variables 
$x[k]_j$
% ~ (k=1,\ldots,c-2, ~j=1,\cdots,6)$ 
are solved up to constant.
We can see from the three-dimensional interpretation of
the  flips in
Fig.~\ref{fig:sphere_2triangle} that the cluster
variables~$x[k]_j$ are assigned to edges
of the ideal tetrahedra $\triangle_i(F_k)$,
and that
both~\eqref{sphere_x_initial} and~\eqref{folding_x_top} imply
that
the identified edges in~$S^3\setminus K_{q/p}$
have  equal  complex numbers.
Then the cluster variables are identified with Zickert's complex
variables on edges, and
the complex volume can be computed from the cluster variables.
%}

To compute the complex volume modulo $\pi^2$, we should  fix an
orientation
of each ideal tetrahedra~$\triangle_i(F_k)$.
Although, unlike the case of
the once-punctured torus bundle, 
an orientation of triangulations of $\Sigma_{0,4}$ cannot be fixed
uniquely due to the transformation group $\Gamma$, and
it is tedious
%not easy
to give orientations  from $q/p$.
So for simplicity, we discard a vertex ordering, and we
give a formula of   the complex
volume modulo~$\frac{\pi^2}{6}$.
From \eqref{sphere_z_from_y} and \eqref{eq:y-vex}, we obtain
the following.

\begin{lemma}\label{lem:z-vex}
  Let $(\boldsymbol{x}[k], \boldsymbol{\varepsilon}[k])$ 
  be a cluster pattern which satisfies the conditions
  \eqref{eq:init-ve}--\eqref{folding_x_top}.
  Then the moduli $z_1[k]$ and $z_2[k]$ of a pair of tetrahedra
  $\triangle_1(F_k)$ and $\triangle_2(F_k)$
  are  given from the cluster pattern,
  \begin{equation}\label{eq:z-vex}
    z_i[k]
    =
    \begin{cases}
      \displaystyle
      - \frac{1}{\varepsilon[k]_i} \,
      \frac{x[k]_5 \, x[k]_6}{x[k]_3 \, x[k]_4} ,
      &
      \text{for $F_k=R$,}
      \\[3ex]
      \displaystyle
      - \frac{1}{\varepsilon[k]_{2+i}} \,
      \frac{x[k]_1 \, x[k]_2}{x[k]_5 \, x[k]_6} ,
      &
      \text{for $F_k=L$.}
    \end{cases}
  \end{equation}
  for $k=1,\cdots,c-2$ and $i=1,2$.
\end{lemma}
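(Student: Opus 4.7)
The plan is to reduce the lemma to an application of Proposition~\ref{thm:volume_sphere} combined with the $y$-to-$x$ substitution \eqref{eq:y-vex}.

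First I would check that the cluster-level hypotheses \eqref{eq:init-ve}--\eqref{folding_x_top} imply the $y$-pattern hypotheses \eqref{folding_y_bottom} and \eqref{folding_y_top} of Proposition~\ref{thm:volume_sphere}. For the initial condition, plugging $\boldsymbol{x}[1]=(x,x,x,x,x_5,x_6)$ together with either of the two allowed sign patterns $\psi(\boldsymbol{\varepsilon}[1])$ listed in \eqref{eq:init-ve} into \eqref{eq:y-vex} produces $\boldsymbol{y}[1]=(y,y,-1/y,-1/y,-1,-1)$ with $y=\pm x^2/(x_5 x_6)$, which is exactly \eqref{folding_y_bottom}. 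For the terminal condition at $k=c-2$, the folding equalities \eqref{folding_x_top} force two of the three distinct monomial ratios appearing in \eqref{eq:y-vex} to equal $1$, and \eqref{folding_ve_top} supplies the sign $-1$ in precisely the entries required by \eqref{folding_y_top}; here the dichotomy between the two options in \eqref{eq:init-ve} matches the parity of $n$ that dictates the form of \eqref{folding_y_top}.

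With the hypotheses of Proposition~\ref{thm:volume_sphere} in force, I invoke \eqref{sphere_z_from_y} to write $z_i[k]=-1/y[k]_i$ when $F_k=R$ and $z_i[k]=-1/y[k]_{2+i}$ when $F_k=L$, for $i=1,2$. Substituting
$$y[k]_i = \varepsilon[k]_i \, \frac{x[k]_3 \, x[k]_4}{x[k]_5 \, x[k]_6}, \qquad y[k]_{2+i} = \varepsilon[k]_{2+i} \, \frac{x[k]_5 \, x[k]_6}{x[k]_1 \, x[k]_2}$$
from \eqref{eq:y-vex}, with each coefficient $\varepsilon[k]_j$ understood via $\psi$ as an element of $\{\pm 1\}$, and taking reciprocals, immediately yields the two cases of \eqref{eq:z-vex}.

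The main obstacle I anticipate is the bookkeeping in the first step, specifically verifying that the recursive evolution of the tropical coefficients under the flips \eqref{explicit_sphere_xe} carries the initial sign pattern \eqref{eq:init-ve} to the prescribed terminal pattern \eqref{folding_ve_top}, and that the corresponding $x$-ratios propagate to satisfy \eqref{folding_x_top}. Since the mutation \eqref{mutation_epsilon} in the tropical semifield only multiplies coefficients by monomials in $\delta$, the image under $\psi$ evolves by $\pm 1$ factors that must be tracked across all $c-3$ flips; once this sign consistency is confirmed, the remaining content of the lemma collapses to the one-line substitution above.
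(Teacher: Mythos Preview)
Your approach is correct and matches the paper's: the lemma follows immediately by substituting \eqref{eq:y-vex} into \eqref{sphere_z_from_y}, which is exactly what the paper indicates in the sentence preceding the statement. The obstacle you anticipate is not one, since \eqref{eq:init-ve}--\eqref{folding_x_top} are all \emph{hypotheses} of the lemma rather than conclusions to be derived from one another, so there is no need to track the evolution of the tropical coefficients across the flips.
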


We choose $x$-variable such that the $y$-variables are geometric.
See Remark~\ref{rem:cv_bridge}.
Then we can identify the $x$-variables with the edge parameters
$c_{ab}$,
and  we obtain
the flattenings of the tetrahedra $\triangle_i(F_k)$ as follows.

\begin{lemma}\label{lem:2bridge-flat}
  We follow the setting of Lemma \ref{lem:z-vex}.
  The flattening
  $(z_i[k]; p_i[k], q_i[k])$ for the
  tetrahedron $\triangle_i(F_k)$ are
  given by
  \begin{equation}
    \begin{aligned}
      &\log z_i[k] + p_i[k] \, \pi \, \I
      =
      \begin{cases}
        \log x[k]_5 + \log x[k]_6 - \log x[k]_3 - \log x[k]_4,
        & \text{for $F_k=R$,}
        \\[2ex]
        \log x[k]_1 + \log x[k]_2 - \log x[k]_5 - \log x[k]_6,
        & \text{for $F_k=L$,}
      \end{cases}
      \\[3ex]
      &-\log\left(1-z_i[k]\right) +q_i[k] \, \pi \,\I
      \\
      &\qquad \qquad
      =
      \begin{cases}
        \log x[k]_3 + \log x[k]_4 - \log x[k]_i - \log x[k+1]_{7-i} ,
        & \text{for $F_k=R$.}
        \\[2ex]
        \log x[k]_5 + \log x[k]_6 - \log x[k]_{2+i} - \log
        x[k+1]_{7-i},
        & \text{for $F_k=L$,}
      \end{cases}
    \end{aligned}
  \end{equation}
for $k=1\cdots,c-2$ and $i=1,2$.
\end{lemma}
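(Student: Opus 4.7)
The plan is to parallel the proof of Lemma~\ref{lemma:torus_pq}. Starting from the explicit formula for $z_i[k]$ in Lemma~\ref{lem:z-vex}, I will first compute $1-z_i[k]$ by direct algebra and then use the mutation formula~\eqref{explicit_sphere_xe} to recognise the numerator as $\pm\,x[k]_i\, x[k{+}1]_{7-i}$ (when $F_k=R$) or $\pm\,x[k]_{2+i}\, x[k{+}1]_{7-i}$ (when $F_k=L$). Once both $z_i[k]$ and $1/(1-z_i[k])$ are displayed as signed monomials in the cluster variables attached to edges of $\triangle_i(F_k)$, the cluster variables realise Zickert's edge parameters~$c_{ab}$ for that tetrahedron, and Proposition~\ref{prop:Zickert} delivers the two flattening identities directly; the integers $p_i[k]$ and $q_i[k]$ absorb the overall $\pm 1$ produced by the signs.

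For concreteness take $F_k=R$, with $\varepsilon[k]_j\in\{\pm1\}$ after $\psi$ has been applied. Lemma~\ref{lem:z-vex} rewrites as
\begin{equation*}
  z_i[k] = -\frac{1}{\varepsilon[k]_i}\cdot\frac{x[k]_5\, x[k]_6}{x[k]_3\, x[k]_4},
  \qquad
  1-z_i[k] = \frac{\varepsilon[k]_i\, x[k]_3\, x[k]_4 + x[k]_5\, x[k]_6}{\varepsilon[k]_i\, x[k]_3\, x[k]_4}.
\end{equation*}
Applying $\psi$ to the $R$-action in \eqref{explicit_sphere_xe} and clearing denominators yields
\begin{equation*}
  x[k{+}1]_6\, x[k]_1 = \pm\bigl(\varepsilon[k]_1\, x[k]_3\, x[k]_4 + x[k]_5\, x[k]_6\bigr),\qquad
  x[k{+}1]_5\, x[k]_2 = \pm\bigl(\varepsilon[k]_2\, x[k]_3\, x[k]_4 + x[k]_5\, x[k]_6\bigr),
\end{equation*}
so that $1/(1-z_i[k]) = \pm\, x[k]_3\, x[k]_4/(x[k]_i\, x[k{+}1]_{7-i})$. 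Taking principal logarithms of $z_i[k]$ and of $1/(1-z_i[k])$ and absorbing each $\pm 1$ into an integer multiple of $\pi\,\I$ yields the two claimed identities for $F_k=R$. The $F_k=L$ case is fully parallel: $z_i[k]=-x[k]_1 x[k]_2/(\varepsilon[k]_{2+i} x[k]_5 x[k]_6)$ and the analogous computation with \eqref{explicit_sphere_xe} identifies the numerator of $1-z_i[k]$ with $\pm\, x[k]_{2+i}\, x[k{+}1]_{7-i}$, so one interchanges the roles of $(x[k]_1,x[k]_2)$ and $(x[k]_3,x[k]_4)$.

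The main technical obstacle is tracking the signs produced by $\psi$ in the coefficient factors $\varepsilon[k]_j/(1\oplus\varepsilon[k]_j)$ and $1/(1\oplus\varepsilon[k]_j)$ appearing in \eqref{explicit_sphere_xe}: these each evaluate to $+1$ or $-1$ depending on the sign of the exponent of $\delta$ in $\varepsilon[k]_j$, and one must check in each sign pattern that the overall prefactor of $\varepsilon[k]_i\, x[k]_3\, x[k]_4 + x[k]_5\, x[k]_6$ in $x[k{+}1]_{7-i}\, x[k]_i$ is indeed $\pm 1$. Because $\varepsilon[k]_j^{\,2}=1$, this is a routine finite case check, and any $\pm 1$ that survives is harmlessly absorbed into the integer flattening parameters $p_i[k],q_i[k]$; no further geometric input is needed beyond Zickert's identity \eqref{Zickert_c}--\eqref{Zickert_log_c}.
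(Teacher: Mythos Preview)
Your proposal is correct and follows essentially the same route as the paper: compute $1/(1-z_i[k])$ from the mutation formula~\eqref{explicit_sphere_xe}, express both $z_i[k]$ and $1/(1-z_i[k])$ as signed monomials in the cluster variables, and then invoke Zickert's identity~\eqref{Zickert_c}--\eqref{Zickert_log_c}. The paper's proof is simply more terse, keeping the coefficient factor $\varepsilon[k]_i/(1\oplus\varepsilon[k]_i)$ symbolic rather than performing the explicit sign case-check you outline; your more detailed tracking of $\psi$ on the tropical prefactors is a harmless elaboration of the same argument.
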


\begin{proof}
  Due to~\eqref{explicit_sphere_xe}, we have
  \begin{equation}\label{eq:1-z-vex}
    \frac{1}{1-z_i[k]}
    =
    \begin{cases}
      \displaystyle
      \frac{\varepsilon[k]_i}{1 \oplus \varepsilon[k]_i}
      \,
      \frac{x[k]_3 \, x[k]_4}{x[k]_i \, x[k+1]_{7-i}},
      &
      \text{for $F_k=R$,}
      \\[3ex]
      \displaystyle
      \frac{\varepsilon[k]_{2+i}}{1 \oplus \varepsilon[k]_{2+i}}
      \,
      \frac{x[k]_5 \, x[k]_6}{
        x[k]_{2+i} \, x[k+1]_{7-i}}
      &
      \text{for $F_k=L$,}
    \end{cases}
  \end{equation}
  for $i=1,2$.
  By comparing~\eqref{eq:z-vex} and~\eqref{eq:1-z-vex}
  with~\eqref{Zickert_c}, we get the claim.
\end{proof}

%   \begin{itemize}
%   \item $F_k=R$,
%     \begin{equation}
%       \begin{aligned}
%         \log z_1[k] + p_1[k] \, \pi \, \I
%         & =
%         \log x[k]_5 + \log x[k]_6 - \log x[k]_3 - \log x[k]_4,
%         \\
%         - \log \left( 1 - z_1[k] \right) + q_1[k] \, \pi \, \I
%         & =
%         \log x[k]_3 + \log x[k]_4 - \log x[k]_1 - \log x[k+1]_6
%         \\
% %
%         \log z_2[k] + p_2[k] \, \pi \, \I
%         & =
%         \log x[k]_5 + \log x[k]_6 - \log x[k]_3 - \log x[k]_4,
%         \\
%         - \log \left( 1 - z_2[k] \right) + q_2[k] \, \pi \, \I
%         & =
%         \log x[k]_3 + \log x[k]_4 - \log x[k]_2 - \log x[k+1]_5 ,
%       \end{aligned}
%     \end{equation}

% \item $F_k=L$,
%     \begin{equation}
%       \begin{aligned}
%         \log z_1[k] + p_1[k] \, \pi \, \I
%         & =
%         \log x[k]_1 + \log x[k]_2 - \log x[k]_5 - \log x[k]_6,
%         \\
%         - \log \left( 1 - z_1[k] \right) + q_1[k] \, \pi \, \I
%         & =
%         \log x[k]_5 + \log x[k]_6 - \log x[k]_3 - \log x[k+1]_6
%         \\
% %
%         \log z_2[k] + p_2[k] \, \pi \, \I
%         & =
%         \log x[k]_1 + \log x[k]_2 - \log x[k]_5 - \log x[k]_6,
%         \\
%         - \log \left( 1 - z_2[k] \right) + q_2[k] \, \pi \, \I
%         & =
%         \log x[k]_5 + \log x[k]_6 - \log x[k]_4 - \log x[k+1]_5 .
%       \end{aligned}
%     \end{equation}
%   \end{itemize}

It is straightforward to get the following theorem from these Lemmas.

\begin{theorem}
  \label{thm:complex_volume_2bridge}
  The complex volume of the $2$-bridge link complement
  $S^3 \setminus K_{q/p}$ is given by
  the flattenings of Lemma \ref{lem:2bridge-flat} as 
  \begin{equation}
    \I \, \left(
      \Vol(S^3 \setminus K_{q/p})
      + \I \,
      \CS(S^3 \setminus K_{q/p})
    \right)
    =
    \sum_{k=1}^{c-3}
    \sum_{i=1,2}
    \widehat{L}(z_i[k]; p_i[k], q_i[k])
    \mod \frac{\pi^2}{6}.
    % \Bigl(
    %   L(z_1[k] ; p_1[k], q_1[k])
    %   +
    %   L(z_2[k] ; p_2[k], q_2[k])
    % \Bigr) .
  \end{equation}
\end{theorem}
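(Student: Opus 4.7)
The plan is to assemble the theorem from the ingredients already prepared in Proposition~\ref{thm:volume_sphere} and Lemmas~\ref{lem:z-vex} and~\ref{lem:2bridge-flat}, using the Neumann formula~\eqref{complex_volume_M} and the orientation-free variant recalled in Remark~\ref{rem:torus_orient}. The geometric content, namely that the moduli $z_i[k]$ provide a complete hyperbolic structure on $S^3 \setminus K_{q/p}$, has already been established in Proposition~\ref{thm:volume_sphere}, so the canonical decomposition of Sakuma--Weeks supplies the triangulation $\{\triangle_i(F_k)\}_{i=1,2,\,k=1,\ldots,c-3}$ whose moduli now sit in the upper half plane by choice of a geometric solution.

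First I would observe that by Lemma~\ref{lem:z-vex}, together with Lemma~\ref{lem:2bridge-flat}, the pair $(z_i[k];p_i[k],q_i[k])$ built from the cluster pattern is a genuine flattening: direct substitution of the formulas~\eqref{eq:z-vex} and~\eqref{eq:1-z-vex} into Zickert's identities~\eqref{Zickert_c} realises the cluster variable $x[k]_j$ as the Zickert edge parameter $c_{ab}$ on the corresponding edge of $\triangle_i(F_k)$. The role of the conditions~\eqref{eq:init-ve}--\eqref{folding_x_top} is precisely to guarantee the consistency demanded at the end of Proposition~\ref{prop:Zickert}: the initial identification $x_1=x_2=x_3=x_4=x$ in~\eqref{sphere_x_initial} implements the folding of $\sigma[2]$, the terminal identification~\eqref{folding_x_top} implements the folding of $\sigma[c-1]$, and for interior flips the mutation rule~\eqref{explicit_sphere_xe} propagates the same cluster variable along any edge shared by successive tetrahedra. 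Consequently, edges of the ideal tetrahedra that are identified in $S^3 \setminus K_{q/p}$ carry a common complex parameter, which is exactly Zickert's hypothesis.

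Next I would apply Remark~\ref{rem:cv} in conjunction with Neumann's formula~\eqref{complex_volume_M}. Since we have deliberately chosen to discard the vertex ordering on the $\triangle_i(F_k)$ (as explained in the paragraph preceding Lemma~\ref{lem:z-vex}; an orientation of the triangulation of $\Sigma_{0,4}$ cannot be selected canonically from $q/p$ because of the group $\Gamma$), the signs $\sign(\nu)$ are suppressed and, by the orientation-free analogue of Remark~\ref{rem:torus_orient}, the resulting complex volume is determined only modulo $\pi^2/6$. Summing over the $2(c-3)$ tetrahedra yields
\begin{equation*}
  \I\bigl(\Vol(S^3\setminus K_{q/p}) + \I\,\CS(S^3\setminus K_{q/p})\bigr)
  =
  \sum_{k=1}^{c-3}\sum_{i=1,2} \widehat{L}(z_i[k];p_i[k],q_i[k])
  \mod \frac{\pi^2}{6},
\end{equation*}
which is the assertion.

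The only subtle point, and what I regard as the main obstacle, is the verification that the cluster variable assignment to edges is \emph{globally} consistent across $S^3 \setminus K_{q/p}$: one must check that every pair of edges identified in the canonical decomposition indeed receives a common $x[k]_j$. For interior identifications this is immediate from~\eqref{explicit_sphere_xe}, since the new cluster variable produced by a mutation is literally the one sitting on the shared face of two successive tetrahedra; but for the two pairs of tetrahedra created by the folding of $\sigma[2]$ and $\sigma[c-1]$ one has to read off from Fig.~\ref{fig:sphere_2triangle} which edges get glued and match them to the constraints~\eqref{sphere_x_initial} and~\eqref{folding_x_top}. Once this bookkeeping is done the theorem follows mechanically, and one may note that the imaginary part recovers the hyperbolic volume formula of the corollary to Proposition~\ref{thm:volume_sphere}, providing a non-trivial internal consistency check.
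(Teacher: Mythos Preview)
Your proposal is correct and follows essentially the same route as the paper: the paper simply states that the theorem is ``straightforward to get from these Lemmas'' (namely Lemmas~\ref{lem:z-vex} and~\ref{lem:2bridge-flat}), relying on the identification of cluster variables with Zickert's edge parameters already discussed in the paragraphs preceding those lemmas, and on the orientation-free Neumann formula modulo $\pi^2/6$. You have spelled out the bookkeeping (global edge-consistency under the foldings, the role of~\eqref{sphere_x_initial} and~\eqref{folding_x_top}) more explicitly than the paper does, but the underlying argument is identical.
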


%%%%%

\begin{remark}
  \label{rem:Ronly}
  The orientation of each tetrahedron can be determined case by case.
  As an example,
  we study
  $K_{[a+1,2]}$, which  has a  cluster pattern
  \begin{equation*}
    \left(\boldsymbol{x}[1], \boldsymbol{\varepsilon}[1]\right)
    \xrightarrow{R}
    \left(\boldsymbol{x}[2], \boldsymbol{\varepsilon}[2]\right)
    \xrightarrow{R}
    \cdots
    \xrightarrow{R}
    \left(\boldsymbol{x}[a+1], \boldsymbol{\varepsilon}[a+1]\right) .
  \end{equation*}
  Based on triangulations of link complements, we may choose vertex
  ordering so that
  \begin{equation*}
    \left( \sign(\triangle_1(F_k)) , \sign(\triangle_2(F_k)) \right)
    =
    \begin{cases}
      (-1,-1),
      &
      \text{for odd $k$ and $k\neq a, a-1$,}
      \\
      (+1,+1),
      &
      \text{for even $k$ and $k\neq a$,}
      \\
      (+1,-1),
      &
      \text{for odd $k$ and $k = a, a-1$,}
      \\
      (-1,-1) ,
      &
      \text{for even $k$ and $k= a$.}
    \end{cases}
  \end{equation*}
  We can then identify
  \begin{multline}
    (z_1[k] , z_2[k])
    \\
    =
    \begin{cases}
      \displaystyle
      \left(
        \frac{1 \oplus \varepsilon[k]_1}{\varepsilon[k]_1}
        \, 
        \frac{x[k]_1 \, x[k+1]_6}{
          x[k]_3 \, x[k]_4},
        \frac{1 \oplus \varepsilon[k]_2}{\varepsilon[k]_2}
        \, 
        \frac{x[k]_2 \, x[k+1]_5}{
          x[k]_3 \, x[k]_4}      
      \right) ,
      &
      \text{for odd $k$ and $k\neq a, a-1$,}
      \\[2ex]
      \displaystyle
      \left(
        - \frac{1}{\varepsilon[k]_1} \, 
        \frac{x[k]_5 \, x[k]_6}{
          x[k]_3 \, x[k]_4},
        -\frac{1}{\varepsilon[k]_2} \, 
        \frac{x[k]_5 \, x[k]_6}{
          x[k]_3 \, x[k]_4}
      \right),
      &
      \text{for even $k$ and $k\neq a$,}
      \\[2ex]
      \displaystyle
      \left(
        \left( 1 \oplus \varepsilon[k]_1  \right) \,
        \frac{x[k]_1 \, x[k+1]_6}{
          x[k]_5 \, x[k]_6},
        \frac{1 \oplus \varepsilon[k]_2}{\varepsilon[k]_2}
        \,
        \frac{x[k]_2 \, x[k+1]_5}{
          x[k]_3 \, x[k]_4}
      \right) ,
      &
      \text{for odd $k$ and $k = a, a-1$,}
      \\[2ex]
      \displaystyle
      \left(
        \frac{1}{1 \oplus \varepsilon[k]_1} \,
        \frac{x[k]_5 \, x[k]_6}{
          x[k]_1 \, x[k+1]_6},
        \frac{1}{1 \oplus \varepsilon[k]_2} \,
        \frac{x[k]_5 \, x[k]_6}{
          x[k]_2 \, x[k+1]_5}      
      \right),
      &
      \text{for even $k$ and $k= a$.}
    \end{cases}
    \label{twist_p_data}
  \end{multline}
  Accordingly, we have
  \begin{multline}
    \left(
      \frac{1}{1-z_1[k]} ,
      \frac{1}{1- z_2[k]}
    \right)
    \\
    =
    \begin{cases}
      \displaystyle
      \left(
        - \varepsilon[k]_1 \, 
        \frac{x[k]_3 \, x[k]_4}{
          x[k]_5 \, x[k]_6},
        -\varepsilon[k]_2 \, 
        \frac{x[k]_3 \, x[k]_4}{
          x[k]_5 \, x[k]_6}      
      \right) ,
      &
      \text{for odd $k$ and $k\neq a, a-1$,}
      \\[2ex]
      \displaystyle
      \left(
        \frac{\varepsilon[k]_1}{1 \oplus \varepsilon[k]_1}
        \, 
        \frac{x[k]_3 \, x[k]_4}{
          x[k]_1 \, x[k+1]_6},
        \frac{\varepsilon[k]_2}{1 \oplus \varepsilon[k]_2}
        \, 
        \frac{x[k]_3 \, x[k]_4}{
          x[k]_2 \, x[k+1]_5}
      \right),
      &
      \text{for even $k$ and $k\neq a$,}
      \\[2ex]
      \displaystyle
      \left(
        -\frac{1}{\varepsilon[k]_1} \,
        \frac{x[k]_5 \, x[k]_6}{
          x[k]_3 \, x[k]_4},
        -\varepsilon[k]_2 \,
        \frac{x[k]_3 \, x[k]_4}{
          x[k]_5 \, x[k]_6}
      \right) ,
      &
      \text{for odd $k$ and $k = a, a-1$,}
      \\[2ex]
      \displaystyle
      \left(
        \frac{1 \oplus \varepsilon[k]_1}{\varepsilon[k]_1}
        \,
        \frac{x[k]_1 \, x[k+1]_6}{
          x[k]_3 \, x[k]_4},
        \frac{1 \oplus \varepsilon[k]_2}{\varepsilon[k]_2}
        \,
        \frac{x[k]_2 \, x[k+1]_5}{
          x[k]_3 \, x[k]_4}      
      \right),
      &
      \text{for even $k$ and $k= a$.}
    \end{cases}
    \label{twist_q_data}
  \end{multline}
  These data give the flattening
  $(z_i[k]; p_i[k], q_i[k])$
  for each tetrahedron from~\eqref{Zickert_c}, and we get
  \begin{align}
    \label{twist_cv}
    \begin{split}
    &\I \, \left(
      \Vol(S^3 \setminus K_{[a+1,2]})
      + \I \, \CS(S^3 \setminus K_{[a+1,2]})
    \right)
    \\
    & \hspace{3cm}
    =
    \sum_{k=1}^a
    \sum_{i=1,2}
    \sign(\triangle_i(F_k)) \,
    \widehat{L}(z_i[k]; p_i[k], q_i[k])
    \mod \pi^2 .
    \end{split}
  \end{align}
\end{remark}

%%%%%%%
\subsection{\mathversion{bold}Example: $6_1$}
The knot $6_1$ is a two-bridge knot $K_{[4,2]}$, which corresponds to
$a=3$ in Remark~\ref{rem:Ronly}.
When we set
an initial seed as
\begin{align*}
  &\boldsymbol{x}[1]=(1,1,1,1,x,x),
  &
  \boldsymbol{\varepsilon}[1]
  =
  (\delta^0, \delta^0, \delta^1,\delta^1,\delta^1,\delta^1),
\end{align*}
the constraint~\eqref{folding_x_top} is read as
\begin{equation*}
  x \,(2+x^2) =
  1+3 \, x^2+ x^4 .
\end{equation*}
A geometric solution is $x= 0.1048 \cdots- \I \cdot 1.5524\cdots$, and the
flattenings are calculated from~\eqref{twist_p_data}
and~\eqref{twist_q_data} to be
\begin{equation*}
  \begin{array}{r|ccc}
    k & (z_1[k],z_2[k]) &  (p_1[k],p_2[k]) & (q_1[k],q_2[k])
    \\
    \hline
    1 &
    (-1.3992 \cdots - \I \cdot 0.3256 \cdots,
    -1.3992 \cdots - \I \cdot 0.3256 \cdots )
    &(0,0) &(1,1)
    \\
    2 &
    (1.8518\cdots + \I \cdot 0.9112 \cdots,
    1.8518\cdots + \I \cdot 0.9112 \cdots)
    &(-2,-2) &(-1,-1)
    \\
    3 &
    (0.8951\cdots + \I \cdot 1.552 \cdots,
    0.9566\cdots - \I \cdot 0.6412\cdots) &
    (-2,0) & (1,-1)
  \end{array}
\end{equation*}
We get from~\eqref{twist_cv}
\begin{equation*}
  \Vol({S^3 \setminus 6_1}) + \I \, \CS(S^3 \setminus 6_1)
  =
  3.1639 \cdots + \I \cdot 3.0788 \cdots
  \in
  \mathbb{C}/ \I \, \pi^2 \, \mathbb{Z},
%  \mod \pi^2.
\end{equation*}
which coincides with the known result~\cite{SnapPy}.

% \begin{figure}[htbp]
%   \centering
% %  \resizebox{.9\textwidth}{!}{
%     \includegraphics[scale=0.6]{develop61}
% %  }
%   \caption{Developing map of $6_1$ complement.}
%   \label{fig:develop61}
% \end{figure}
%%%%
%%%%%%%%%
\section*{Acknowledgments}
One of the authors (KH) thanks T.~Dimofte and J.~Murakami for
communications.
He also thanks to
participants of the workshop
``Low-Dimensional Topology and Number Theory'' at  MFO in 2012.
%We have used SnapPy~\cite{SnapPy} in this study.
Thanks are also to M.~Sakuma for comments on ideal triangulations.
The work of KH is supported in part by JSPS KAKENHI Grant Number~23340115,
24654041, 22540069.
RI is partially supported by 
Grant-in-Aid for Young Scientists (B) (22740111).
%%%%%%%%%%%%%% newpage
%\bibliographystyle{halphaKH}
%%%%%%%%%%%%%%%%%%
%\bibliography{_def,gravity,gravity2,square,math,ba,tba,math5,vm,square2,math4,qalg,math3,math2,poisson,geometry,soliton,cft,knot,tqft,comb,number}

\end{document}